\let\C\undefined
\numberwithin{equation}{section}
\newtheorem{proposition}{Proposition}[section]
\newtheorem{theorem}[proposition]{Theorem}
\newtheorem{lemma}[proposition]{Lemma}
\theoremstyle{definition}
\newcommand{\defeq}{\coloneqq}
\newcommand{\Nset}{\mathbb{N}}
\newcommand{\Zset}{\mathbb{Z}}
\newcommand{\Rset}{\mathbb{R}}
\newcommand{\Sset}{\mathbb{S}}
\newcommand{\Qset}{\mathbb{Q}}
\newcommand{\Bset}{\mathbb{B}}
\newcommand{\tangent}{\mathrm{T}}
\newcommand{\sobolev}{\smash{\mathrm{W}}}
\newcommand{\soboleh}{\smash{\mathrm{H}}}
\newcommand{\smooth}{\mathrm{C}}
\newcommand{\lebesgue}{\mathrm{L}}
\newcommand{\continuous}{\mathrm{C}}
\newcommand{\VMO}{\mathrm{VMO}}
\newcommand{\BMO}{\mathrm{BMO}}
\newcommand{\dif}{\,\mathrm{d}}
\newcommand{\compose}{\,\circ\,}
\newcommand{\manifold}[1]{\mathcal{#1}}
\DeclarePairedDelimiter{\brk}{(}{)}
\DeclarePairedDelimiter{\abs}{\lvert}{\rvert}
\DeclarePairedDelimiter{\norm}{\lVert}{\rVert}
\DeclarePairedDelimiter{\seminorm}{\lvert}{\rvert}
\DeclarePairedDelimiter{\floor}{\lfloor}{\rfloor}
\DeclarePairedDelimiterX{\intvc}[2]{[}{]}{#1,#2}
\DeclarePairedDelimiterX{\intvl}[2]{(}{]}{#1,#2}
\DeclarePairedDelimiterX{\intvr}[2]{[}{)}{#1,#2}
\DeclarePairedDelimiterX{\intvo}[2]{(}{)}{#1,#2}
\DeclarePairedDelimiterX{\setcond}[2]{\{}{\}}{#1 \,\delimsize\vert\, #2}
\newcommand{\restr}[1]{\vert_{#1}}
\newcommand{\Deriv}{\mathrm{D}}
\newcommand{\extdiff}{\mathrm{d}}
\newcommand{\eqpunct}[1]{\,\text{#1}}
\newcommand\stSymbol[1][]{%
\nonscript\;#1\vert
\allowbreak
\nonscript\;
\mathopen{}}
\DeclarePairedDelimiterX\set[1]\{\}{%
\renewcommand\st{\stSymbol[\delimsize]}
#1
}
\providecommand{\st}{\stSymbol}
\DeclareMathOperator{\dist}{dist}
\DeclareMathOperator{\inj}{inj}
\DeclareMathOperator{\tr}{tr}
\newcommand{\weakto}{\rightharpoonup}
\renewcommand{\PrintDOI}[1]{%
  \href{http://dx.doi.org/#1}{doi:#1}%
}
\newcommand{\PrintarXiv}[1]{%
  \href{https://arxiv.org/abs/#1}{arXiv:#1}%
}
\title
{%
Equi-integrable approximation of Sobolev mappings between manifolds%
}
\author{Jean Van Schaftingen}
\keywords{Uniform integrability; topological obstructions; Jacobian; weak approximation}
\subjclass[2020]{58D15 (46E35, 46T10, 58C25)}
\newcommand{\sectref}[1]{\hyperref[#1]{\S \ref*{#1}}}
\begin{document}

\address{
Universit\'e catholique de Louvain, Institut de Recherche en Math\'ematique et Physique, Chemin du Cyclotron 2 bte L7.01.01, 1348 Louvain-la-Neuve, Belgium}

\email{Jean.VanSchaftingen@UCLouvain.be}

\begin{abstract}
We show that limits of sequences of smooth maps between compact Riemannian manifolds with equi-integrable $\mathrm{W}^{1, p}$-Sobolev energy can always be strongly approximated by smooth maps, giving a counterpart of Hang's density result in $\mathrm{W}^{1, 1}$ for the Sobolev space $\mathrm{W}^{1, p}$ with integer $p \ge 2$.
Our result extends to higher-order Sobolev spaces and is straightforward in fractional Sobolev spaces.
We also provide a proof based on the weak continuity of Jacobians in the cases where the cohomological criterion of Bethuel, Demengel, Coron and H\'elein applies.
\end{abstract}

\thanks{The author was supported by the Mandat d'Impulsion Scientifique F.4523.17, ``Topological singularities of Sobolev maps'' and by the Projet de Recherche T.0229.21 ``Singular Harmonic Maps and Asymptotics of Ginzburg--Landau Relaxations'' of the Fonds de la Recherche Scientifique--FNRS}

\maketitle

\setcounter{tocdepth}{1}
\tableofcontents
\setcounter{tocdepth}{5}

\section{Introduction}
Given two compact Riemannian manifolds \(\manifold{M}\) and \(\manifold{N}\) and \(p \in \intvr{1}{\infty}\),
we are interested in the \emph{Sobolev space of mappings}
\begin{equation}
\label{eq_Roejiejai2zah4iek0auwae0}
\sobolev^{1, p}\brk{\manifold{M}, \manifold{N}}
 = \set{u \in \sobolev^{1, p}\brk{\manifold{M}, \Rset^\nu} \st  u \in \manifold{N} \text{ almost everywhere in \(\manifold{M}\)} }\eqpunct{,}
\end{equation}
where \(\manifold{N}\) is isometrically embedded into \(\Rset^\nu\) -- without loss of generality in view of the Nash embedding theorem  \cite{Nash_1956} -- and where the classical linear Sobolev space is defined as 
\[
 \sobolev^{1, p}\brk{\manifold{M}, \Rset^\nu}
 \defeq \set[\Big]{u \in \lebesgue^p \brk{\manifold{M}, \Rset^\nu} \st  u \text{ is weakly differentiable and } \int_{\manifold{M}} \abs{\Deriv u}^p < \infty }\eqpunct{.}
\]
Sobolev spaces of mappings appear naturally in calculus of variations and partial differential equations in many contexts, including the theory of harmonic maps \cite{Eells_Lemaire_1978}, physical models with non-linear order parameters \cite{Mermin1979}, Cosserat models in elasticity \citelist{\cite{Ericksen_Truesdell_1958}}, and the description of attitudes of a square or a cube in computer graphics \cite{Huang_Tong_Wei_Bao_2011}.

A natural subset of \( \sobolev^{1, p}\brk{\manifold{M}, \manifold{N}}\) is the class of \emph{strongly approximable mappings,} defined as the closure with respect to the norm of smooth mappings:
\begin{multline}
\label{eq_cho3eibic6eo6ieSh9iPhieh}
 \soboleh^{1, p}_{\mathrm{St}} \brk{\manifold{M}, \manifold{N}}
 \defeq \Bigl\{u \in \sobolev^{1, p} \brk{\manifold{M}, \manifold{N}}
 \st  \lim_{n \to \infty} \int_{\manifold{M}} \abs{u_n - u}^p + \abs{\Deriv u_n - \Deriv u}^p = 0\\ \text{ and } u_n \in \smooth^\infty \brk{\manifold{M}, \manifold{N}}\Bigr\}
 \eqpunct{.}
\end{multline}
The \emph{strong approximation problem} asks whether one has
\begin{equation}
\label{eq_liiy1mai7jeit6Athiugie4y}
 \soboleh^{1, p}_{\mathrm{St}} \brk{\manifold{M}, \manifold{N}}
 = \sobolev^{1, p} \brk{\manifold{M}, \manifold{N}}
 \eqpunct{,}
\end{equation}
that is, whether any map in \(\sobolev^{1, p} \brk{\manifold{M}, \manifold{N}}\) is the strong limit in \(\sobolev^{1, p}\brk{\manifold{M}, \Rset^\nu}\) of smooth maps taking their value in \(\manifold{N}\).
The classical argument in the linear case of convolution by a family of mollifiers is not adapted to the nonlinear setting of manifolds, since there is no reason for an average of points in the target manifold to still satisfy the same constraint.

When \(p \ge \dim \manifold{M}\), Schoen and Uhlenbeck have showed that the embedding of \( \sobolev^{1, p} \brk{\manifold{M}, \manifold{N}}\) into the set of continuous mappings \(\continuous \brk{\manifold{M}, \manifold{N}}\), for \(p > \dim \manifold{M}\), or into the set of vanishing mean oscillation mappings \(\VMO \brk{\manifold{M}, \manifold{N}}\), for \(p = \dim \manifold{M}\), ensures that \eqref{eq_liiy1mai7jeit6Athiugie4y} holds \cite{Schoen_Uhlenbeck_1983}.
When \(1 \le p < \dim \manifold{M}\), the question is more delicate.
If \(\pi_1 \brk{\manifold{M}} \simeq \dotsb \simeq \pi_{\floor{p - 1}}\brk{\manifold{M}} \simeq \set{0}\), which includes the case of the \(m\)-dimensional sphere \(\manifold{M} = \Sset^m\), then
\eqref{eq_liiy1mai7jeit6Athiugie4y} holds if and only if \(\pi_{\floor{p}} \brk{\manifold{N}} \simeq \set{0}\)  \citelist{\cite{Bethuel_1991}\cite{Hang_Lin_2003_II}}, where \(\floor{p} \in \Nset\) is the integer part of \(p\) characterised by the condition \(p - 1<\floor{p} \le p\) and where \(\pi_{\ell} \brk{\manifold{N}}\) denotes for \(\ell \in \Nset \setminus \set{0}\) the \(\ell\)-th homotopy group of \(\manifold{N}\) (see for example \cite{Hatcher_2002}); the condition \(\pi_{\ell}\brk{\manifold{N}} \simeq \set{0}\) is equivalent to the fact that every map in \(\smooth \brk{\Sset^\ell, \manifold{N}}\) is the restriction to \(\Sset^\ell = \partial \Bset^{\ell + 1}\) of a map in \(\smooth \brk{\Bset^{\ell + 1}, \manifold{N}}\).
For a general compact domain manifold \(\manifold{M}\), one has \eqref{eq_liiy1mai7jeit6Athiugie4y} if and only if every mapping in \(\continuous \brk{\smash{\manifold{M}^{\floor{p}}}, \manifold{N}}\) is the restriction of a map in \(\smooth \brk{\manifold{M}, \manifold{N}}\), where \(\smash{\manifold{M}^\ell}\) is the \(\ell\)-dimensional component of a triangulation of the domain \(\manifold{M}\) \cite{Hang_Lin_2003_II}.

When \eqref{eq_liiy1mai7jeit6Athiugie4y} fails, one can wonder whether smooth maps are dense for a weaker notion of convergence.
The \emph{bounded approximation problem} consists in studying the \emph{bounded sequential closure of smooth maps} defined as 
\begin{multline*}
 \soboleh^{1, p}_{\mathrm{Bd}} \brk{\manifold{M}, \manifold{N}}
 \defeq \Bigl\{u \in \sobolev^{1, p}\brk{\manifold{M}, \manifold{N}}
 \st
 \lim_{n \to \infty} \int_{\manifold{M}} \abs{u_n - u}^p = 0\eqpunct, \sup_{n \in \Nset} \int_{\manifold{M}} \abs{\Deriv u_n}^p <\infty\\ \text{ and } u_n \in \smooth^\infty \brk{\manifold{M}, \manifold{N}}\Bigr\}\eqpunct.
\end{multline*}
The definitions imply immediately the inclusions
\begin{equation}
\label{eq_ohv3chieyaeNg2uuroYeish4}
 \soboleh^{1, p}_{\mathrm{St}} \brk{\manifold{M}, \manifold{N}}\subseteq
 \soboleh^{1, p}_{\mathrm{Bd}} \brk{\manifold{M}, \manifold{N}}
 \subseteq \sobolev^{1, p} \brk{\manifold{M}, \manifold{N}}
 \eqpunct{,}
\end{equation}
so that the bounded sequential closure of smooth maps
\(\soboleh^{1, p}_{\mathrm{Bd}} \brk{\manifold{M}, \manifold{N}}\) is only interesting when the strong approximation property \eqref{eq_liiy1mai7jeit6Athiugie4y} fails.

When \(1 \le p < \dim \manifold{M}\) and when \(p \not \in \Nset\), Bethuel has proved the first inclusion in \eqref{eq_ohv3chieyaeNg2uuroYeish4} becomes an equality \cite{Bethuel_1991}:
\begin{equation}
\label{eq_saeNohshai4pheikiechai7Z}
 \soboleh^{1, p}_{\mathrm{St}}\brk{\manifold{M}, \manifold{N}}
 = \soboleh^{1, p}_{\mathrm{Bd}} \brk{\manifold{M}, \manifold{N}}
 \eqpunct{.}
\end{equation}
On the other hand if \(1 \le p < \dim \manifold{M}\) and if \(p \in \Nset\), then \eqref{eq_saeNohshai4pheikiechai7Z} holds if and only if \(\pi_p\brk{\manifold{N}}\simeq \set{0}\)  \citelist{\cite{Bethuel_1991}\cite{Hang_Lin_2003_II}}.
In the latter case, one can still wonder whether the second inclusion in \eqref{eq_ohv3chieyaeNg2uuroYeish4} is an equality;
this is the case when \(p = 1\) for which one has \cite{Hajlasz_1994} (see also \citelist{\cite{Hang_2002}\cite{Pakzad_Riviere_2003}\cite{Pakzad_2003}})
\begin{equation}
\label{eq_Buc0fa4quohmiem0ineeshae}
 \soboleh^{1, 1}_{\mathrm{Bd}}\brk{\manifold{M}, \manifold{N}}
 = \sobolev^{1, 1} \brk{\manifold{M}, \manifold{N}}
 \eqpunct;
\end{equation}
when \(p \ge 2\), one still has, if \(\pi_{1} \brk{\manifold{N}}\simeq \dotsb \simeq \pi_{p - 1}\brk{\manifold{N}} \simeq \set{0}\),
\begin{equation}
\label{eq_Ohl5ahkin7chooTieth1aisa}
 \soboleh^{1, p}_{\mathrm{Bd}}\brk{\manifold{M}, \manifold{N}}
 = \sobolev^{1, p} \brk{\manifold{M}, \manifold{N}}
 \eqpunct,
\end{equation}
but \eqref{eq_Ohl5ahkin7chooTieth1aisa} fails in general because of global topological obstructions \cite{Hang_Lin_2003_II} and local analytical obstructions \citelist{\cite{Bethuel_2020}\cite{Detaille_VanSchaftingen}}.

Closely related to the bounded sequential closure is the \emph{weak sequential closure of smooth maps}, defined as
\begin{multline*}
  \soboleh^{1, p}_{\mathrm{Wk}} \brk{\manifold{M}, \manifold{N}}
  \defeq
\bigl\{u \in \sobolev^{1, p} \brk{\manifold{M}, \manifold{N}} \st u_n \weakto u \text{ weakly in \(\sobolev^{1, p}\brk{\manifold{M}, \Rset^\nu}\)}\\\text{and } u_n \in \smooth^\infty\brk{\manifold{M}, \manifold{N}} \bigr\}\eqpunct.
\end{multline*}
The weak convergence is defined by the topology on \(\sobolev^{1, p} \brk{\manifold{M}, \manifold{N}} \) induced by the \emph{weak topology generated} on the linear space \(\sobolev^{1, p} \brk{\manifold{M}, \Rset^\nu} \) by its linear continuous forms.
In other words, \(u_n \weakto u \) weakly in \(\sobolev^{1, p} \brk{\manifold{M}, \Rset^\nu}\) as \(n \to \infty\) if and only if
for every linear continuous \(\xi \colon \sobolev^{1, p} \brk{\manifold{M}, \Rset^\nu} \to \Rset\), the sequence \(\brk{\xi \brk{u_n}}_{n \in \Nset}\) converges to \(\xi \brk{u}\) in \(\Rset\).
Even though it is not known whether the resulting topology on \(\sobolev^{1, p} \brk{\manifold{M}, \manifold{N}} \) depends on the isometric embedding \(\manifold{N}\subseteq \Rset^\nu\), the associated \emph{weak convergence is intrinsic} can be proved to be be intrinsic.

When \(p > 1\), it follows from characterisations of weak convergence that \(u_n \weakto u\) weakly in \( \sobolev^{1, p} \brk{\manifold{M}, \Rset^\nu} \) as \(n \to \infty\) if and only if \(u_n \to u\) strongly in  \(\lebesgue^{p} \brk{\manifold{M}, \manifold{N}}\) as \(n \to \infty\) and the sequence \(\brk{\int_{\manifold{M}} \abs{\Deriv u_n}^p}_{n \in \Nset}\) is bounded; one has therefore
\begin{equation}
\label{eq_ooNgooloshaefeigha8iehi1}
  \soboleh^{1, p}_{\mathrm{Wk}}\brk{\manifold{M}, \manifold{N}}
 = \soboleh^{1, p}_{\mathrm{Bd}} \brk{\manifold{M}, \manifold{N}}
 \eqpunct{.}
\end{equation}
On the other hand, when \(p = 1\), 
it follows from the Dunford-Pettis compactness criterion \citelist{\cite{Dunford_1939}\cite{Dunfold_Pettis_1940}\cite{Dieudonne_1951}} (see also \citelist{\cite{Royden_Fitzpatrick}*{\S 19.5}\cite{Dunford_Schwartz_1958}*{Thm.\ IV.8.9}\cite{Voigt_2020}*{Ch.\ 15}\cite{Brezis_2011}*{Thm.\ 4.30}\cite{Beauzamy_1982}*{\S VI.2}\cite{Diestel_1984}*{Ch.\ VII}\cite{Fonseca_Leoni_2007}*{Thm.\ 2.54}\cite{Bogachev_2007}*{Thm.\ 4.7.18}}) that
\(u_n \weakto u\) weakly in \( \sobolev^{1, 1} \brk{\manifold{M}, \manifold{N}} \) as \(n \to \infty\) if and only if \(u_n \to u\) in \(\lebesgue^1 \brk{\manifold{M}, \Rset^\nu}\) as \(n \to \infty\) and if the sequence \(\brk{\Deriv u_n}_{n \in \Nset}\) is \emph{equi-integrable} (or, equivalently, uniformly integrable):
\[
 \lim_{t \to \infty} \sup_{n \in \Nset}\int_{\manifold{M}} \brk{\abs{\Deriv u_n} - t}_+ = 0
 \eqpunct.
\]
It turns out that the bounded and weak sequential closures \(\soboleh^{1, 1}_{\mathrm{Wk}}\brk{\manifold{M}, \manifold{N}}\) and \(\soboleh^{1, 1}_{\mathrm{Bd}} \brk{\manifold{M}, \manifold{N}}\) differ when \(\pi_1 \brk{\manifold{N}} \not \simeq \set{0}\), that is, when the target manifold \(\manifold{N}\) is not simply-connected;
indeed one has then Hang's weak density result \cite{Hang_2002}
\begin{equation}
\label{eq_eipiekaiHohk3xeed9eijaso}
 \soboleh^{1, 1}_{\mathrm{Wk}}\brk{\manifold{M}, \manifold{N}}
 = \sobolev^{1, 1}_{\mathrm{St}} \brk{\manifold{M}, \manifold{N}}\eqpunct{;}
\end{equation}
together with \eqref{eq_Buc0fa4quohmiem0ineeshae} and the failure of \eqref{eq_liiy1mai7jeit6Athiugie4y}, this shows that \eqref{eq_ooNgooloshaefeigha8iehi1} then fails.

The goal of the present work is to explain the apparent contrast between  \eqref{eq_ooNgooloshaefeigha8iehi1} and \eqref{eq_eipiekaiHohk3xeed9eijaso}.
In order to do this, we consider the \emph{sequential equi-integrable closure} of smooth maps defined as
\begin{multline}
\label{eq_Thaedah9eeyohtheesho4aic}
 \soboleh^{1, p}_{\mathrm{Ei}} \brk{\manifold{M}, \manifold{N}}
 \defeq \Bigl\{u \in \sobolev^{1, p}\brk{\manifold{M}, \manifold{N}}
 \st
 \lim_{n \to \infty} \int_{\manifold{M}} \abs{u_n - u}^p = 0 \eqpunct,\\ \lim_{t \to \infty} \sup_{n \in \Nset} \int_{\manifold{M}} \brk{\abs{\Deriv u_n} - t}_+^p = 0 \text{ and } u_n \in \smooth^\infty \brk{\manifold{M}, \manifold{N}}\Bigr\}
 \eqpunct{.}
\end{multline}

It follows from the definitions and the properties of convergences, that we have, when \(p > 1\),
\begin{equation}
\label{eq_aepopa9oTahquaas6vah2aer}
 \soboleh^{1, p}_{\mathrm{St}} \brk{\manifold{M}, \manifold{N}}
 \subseteq
 \soboleh^{1, p}_{\mathrm{Ei}} \brk{\manifold{M}, \manifold{N}}
  \subseteq
 \soboleh^{1, p}_{\mathrm{Wk}} \brk{\manifold{M}, \manifold{N}}
 =
 \soboleh^{1, p}_{\mathrm{Bd}} \brk{\manifold{M}, \manifold{N}}
 \subseteq \sobolev^{1, p} \brk{\manifold{M}, \manifold{N}}\eqpunct{,}
\end{equation}
whereas, when \(p = 1\),
\begin{equation}
\label{eq_iaxei7oiZeghahfaiSaSooku}
 \soboleh^{1, 1}_{\mathrm{St}} \brk{\manifold{M}, \manifold{N}}
 \subseteq
 \soboleh^{1, 1}_{\mathrm{Ei}} \brk{\manifold{M}, \manifold{N}}
  =
 \soboleh^{1, 1}_{\mathrm{Wk}} \brk{\manifold{M}, \manifold{N}}
 \subseteq
 \soboleh^{1, 1}_{\mathrm{Bd}} \brk{\manifold{M}, \manifold{N}}
 \subseteq \sobolev^{1, p} \brk{\manifold{M}, \manifold{N}}
 \eqpunct{.}
\end{equation}

Our main result states that equality holds in the first inclusion in both \eqref{eq_aepopa9oTahquaas6vah2aer} and \eqref{eq_iaxei7oiZeghahfaiSaSooku}. In other words, strong and equi-integrable approximation by smooth maps are always equivalent.

\begin{theorem}
\label{theorem_W1p_equiintegrable_strong}
If \(\manifold{M}\) and \(\manifold{N}\) are compact Riemannian manifolds and if \(p \in \intvr{1}{\infty}\), then
\begin{equation}
\label{eq_jee6ud9boog1DaMaut7nui3w}
  \soboleh^{1, p}_{\mathrm{St}} \brk{\manifold{M}, \manifold{N}}
 =
 \soboleh^{1, p}_{\mathrm{Ei}} \brk{\manifold{M}, \manifold{N}}\eqpunct{.}
\end{equation}
\end{theorem}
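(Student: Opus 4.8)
The inclusion $\soboleh^{1, p}_{\mathrm{St}} \brk{\manifold{M}, \manifold{N}} \subseteq \soboleh^{1, p}_{\mathrm{Ei}} \brk{\manifold{M}, \manifold{N}}$ is immediate: if $u_n \to u$ strongly in $\sobolev^{1, p}$, then $\brk{\abs{\Deriv u_n}^p}_{n \in \Nset}$ converges in $\lebesgue^1$ and is therefore equi-integrable. The content lies in the converse. I would begin by disposing of the easy configurations: when $p \notin \Nset$, or $p \ge \dim \manifold{M}$, or $p \in \Nset$ with $\pi_{p} \brk{\manifold{N}} \simeq \set{0}$, one has $\soboleh^{1, p}_{\mathrm{St}} \brk{\manifold{M}, \manifold{N}} = \soboleh^{1, p}_{\mathrm{Bd}} \brk{\manifold{M}, \manifold{N}}$ by the results of Bethuel \cite{Bethuel_1991} and of Schoen and Uhlenbeck \cite{Schoen_Uhlenbeck_1983} recalled above, whence $\soboleh^{1, p}_{\mathrm{Ei}} \brk{\manifold{M}, \manifold{N}} \subseteq \soboleh^{1, p}_{\mathrm{Bd}} \brk{\manifold{M}, \manifold{N}} = \soboleh^{1, p}_{\mathrm{St}} \brk{\manifold{M}, \manifold{N}}$ by \eqref{eq_aepopa9oTahquaas6vah2aer} and \eqref{eq_iaxei7oiZeghahfaiSaSooku}. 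This leaves the case $p \in \Nset$, $1 \le p < \dim \manifold{M}$, $\pi_{p} \brk{\manifold{N}} \not\simeq \set{0}$, in which $\floor{p} = p$.

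Fix such a $u$ and smooth maps $u_n \to u$ in $\lebesgue^p$ with $\brk{\abs{\Deriv u_n}^p}_{n \in \Nset}$ equi-integrable, that is, uniformly absolutely continuous: $\sup_{n \in \Nset} \int_{A} \abs{\Deriv u_n}^p \to 0$ as $\abs{A} \to 0$. The key point is that this is precisely the hypothesis under which topology survives the passage to the limit. Slicing $\manifold{M}$ by a generic family of round $p$-spheres $\Sigma$, one can arrange (by Fubini together with an averaging argument) a positive-measure family of spheres $\Sigma$ on which the restricted energies $\brk{\abs{\Deriv (u_n \restr{\Sigma})}^{p}}_{n \in \Nset}$ are equi-integrable uniformly in $n$. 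On such a $\Sigma$, which is $p$-dimensional so that $\sobolev^{1, p} \brk{\Sigma} \hookrightarrow \VMO \brk{\Sigma}$, the uniform control on the energy forces the mean oscillation of $u_n \restr{\Sigma}$ on small sub-balls to be small uniformly in $n$; combined with $u_n \restr{\Sigma} \to u \restr{\Sigma}$ in $\lebesgue^p \brk{\Sigma}$ this upgrades to convergence in $\VMO \brk{\Sigma}$, so that the $\pi_p \brk{\manifold{N}}$-homotopy class of $u_n \restr{\Sigma}$, which is $0$ since $u_n$ is smooth on any small disk in $\manifold{M}$ bounded by $\Sigma$, coincides in the limit with that of $u \restr{\Sigma}$. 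Hence $u \restr{\Sigma}$ is null-homotopic for almost every such $\Sigma$: the ``primary'' topological singularity of $u$, of codimension $p + 1$, is absent. It is here that equi-integrability is indispensable: under merely bounded gradients a ``microscopic loop'' carrying a fixed amount of energy on a subset of $\Sigma$ of vanishing measure can flip the homotopy class of $u_n \restr{\Sigma}$ in the limit, which is exactly the concentration responsible for $\soboleh^{1, p}_{\mathrm{Bd}} \brk{\manifold{M}, \manifold{N}} \ne \soboleh^{1, p}_{\mathrm{St}} \brk{\manifold{M}, \manifold{N}}$.

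To conclude I would feed this into the structural description of the strong closure, namely that $u \in \soboleh^{1, p}_{\mathrm{St}} \brk{\manifold{M}, \manifold{N}}$ if and only if the restriction of $u$ to a generic $p$-dimensional skeleton of a triangulation of $\manifold{M}$ admits a continuous extension to $\manifold{M}$, due to Bethuel and to Hang and Lin \cite{Hang_Lin_2003_II}. The plan is to iterate the slicing argument over the dimension of the skeleton, inserting between successive stages an ``opening'' construction near the skeleton that lowers the effective dimension of the remaining obstruction while preserving equi-integrability, so that each obstruction gets read off on a slice of critical dimension and shown to vanish. In the cases handled by the cohomological criterion of Bethuel, Demengel, Coron and H\'elein, where $\soboleh^{1, p}_{\mathrm{St}} \brk{\manifold{M}, \manifold{N}}$ is cut out by the vanishing of the distributional exterior derivatives $\extdiff \brk{u^{\#} \omega}$ of the pull-backs of closed $p$-forms $\omega$ on $\manifold{N}$, a shorter route is available, since each $u^{\#} \omega$ is $p$-homogeneous in $\Deriv u$: from the elementary estimate
\[
 \int_{\manifold{M}} \abs{u_n - u}\,\abs{\Deriv u_n}^p
 \le t^p \int_{\manifold{M}} \abs{u_n - u}
 + \diam \brk{\manifold{N}} \int_{\set{\abs{\Deriv u_n} > t}} \abs{\Deriv u_n}^p
 \eqpunct{,}
\]
whose first term tends to $0$ for each fixed $t$ and whose second term tends to $0$ as $t \to \infty$ uniformly in $n$ by equi-integrability, combined with the weak convergence $\Deriv u_n \weakto \Deriv u$ in $\lebesgue^p$, one gets the weak continuity of Jacobians $u_n^{\#} \omega \to u^{\#} \omega$ in the sense of distributions; since $\extdiff \brk{u_n^{\#} \omega} = u_n^{\#} \brk{\extdiff \omega} = 0$, also $\extdiff \brk{u^{\#} \omega} = 0$, so $u \in \soboleh^{1, p}_{\mathrm{St}} \brk{\manifold{M}, \manifold{N}}$.

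I expect the main obstacle to be the general case, beyond the cohomological criterion: the higher obstructions to extending $u$ across the skeleton sit on subskeleta of codimension exceeding $p + 1$, where the Sobolev energy of a slice is subcritical and carries no homotopy class, so that the clean $\VMO$-slicing argument above does not apply verbatim. This is what necessitates the inductive ``opening'' procedure, and the delicate part is to carry it out while keeping all the approximants $u_n$ under simultaneous control, in particular preventing the energy of every $u_n$ from concentrating on the successively opened neighbourhoods, uniformly in $n$.
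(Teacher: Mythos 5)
Your overall strategy matches the paper's: slice on $p$-dimensional sets where $\sobolev^{1,p}$ is critical, use the equi-integrability to make the truncated energy on generic slices small, conclude that the restrictions of $u_n$ and $u$ are $\VMO$-homotopic (via the quantitative Schoen--Uhlenbeck/Brezis--Nirenberg averaging estimate), and then feed this into the Bethuel/Hang--Lin strong approximability criterion. The cohomological alternative you sketch is also the content of the paper's \S\ref{section_jacobian}, with the same mechanism (equi-integrability of the minors plus $\lebesgue^1$-convergence and $\lebesgue^\infty$-boundedness of $\omega\compose u_n$).

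There is however one genuine misconception in the closing argument, and it is precisely where you announce a ``main obstacle''. You predict that after showing $u\restr{\Sigma}$ null-homotopic on generic small $p$-spheres, an inductive ``opening'' procedure over the dimension of the skeleton will be needed to read off and kill higher obstructions. This is not the case, and the paper's route is considerably shorter. The strong approximability criterion is a condition on the $p$-skeleton of a generic triangulation only: $u\in\soboleh^{1,p}_{\mathrm{St}}\brk{\manifold{M},\manifold{N}}$ if and only if for almost every small translation $\xi$, the $\VMO$ trace $u\compose\brk{\Pi_{\manifold{M}}+\xi}\restr{\manifold{M}^{p}}$ is $\VMO$-homotopic to $V\restr{\manifold{M}^{p}}$ for some $V\in\continuous\brk{\manifold{M},\manifold{N}}$. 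The key observation you are missing is that such a $V$ is handed to you for free: since each $u_n$ is smooth on all of $\manifold{M}$, its trace on $\manifold{M}^p$ is by definition the restriction of a globally continuous map, namely $u_n\compose\brk{\Pi_{\manifold{M}}+\xi}$ itself. So once \cref{proposition_W1p_good_skeleton_measure} gives you, for $n$ large and generic $\xi$, that $u\compose\brk{\Pi_{\manifold{M}}+\xi}\restr{\manifold{M}^{p}}$ and $u_n\compose\brk{\Pi_{\manifold{M}}+\xi}\restr{\manifold{M}^{p}}$ are $\VMO$-homotopic, you are done: no dimensional induction, no opening. Relatedly, you should slice by $p$-dimensional skeletons of a triangulation (or, as the paper does, by translated cubical $p$-grids $K^{\Omega,p}_{\varepsilon,\xi}$), not by round $p$-spheres; null-homotopy on every small sphere is weaker than the criterion and is part of what misled you into the iteration scheme. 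A second, smaller imprecision: the $\VMO$ ingredient is not that $u_n\restr{\Sigma}\to u\restr{\Sigma}$ in $\VMO$ norm (which need not hold) but that the truncated energy bound plus small $\lebesgue^1$ distance suffice for the two maps to be $\VMO$-homotopic, which is \cref{proposition_homotopy_VMO_Rp} and \cref{proposition_homotopy_equiintegrable_Rp}. Apart from these points, your reading of where equi-integrability is used and why bounded energy alone is insufficient is accurate, and your cohomological side argument is essentially correct.
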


When either \(p \ge \dim \manifold{M}\) or \(p \not \in \Nset\), then \eqref{eq_saeNohshai4pheikiechai7Z} holds so that \eqref{eq_jee6ud9boog1DaMaut7nui3w} follows immediately from \eqref{eq_aepopa9oTahquaas6vah2aer};
\cref{theorem_W1p_equiintegrable_strong} is thus relevant only for \(p \in \set{1, \dotsc, \dim \manifold{M} - 1}\).

For \(p = 1\), \cref{theorem_W1p_equiintegrable_strong} is due to Hang \cite{Hang_2002}; \cref{theorem_W1p_equiintegrable_strong} shows that the formulation of Hang’s nonlinear result \eqref{eq_eipiekaiHohk3xeed9eijaso} in terms of sequential equi-integrable closure rather than the linear framework of sequential closure for the weak topology becomes a particular case of a general phenomenon rather than an exception.

Giaquinta, Modica and Souček have proved a counterpart of \cref{theorem_W1p_equiintegrable_strong} for the \emph{convergence of minors} \cite{Giaquinta_Modica_Soucek_1998}*{\S 3.4.1}.

Hang’s proof for \(p=1\) relies on Egorov’s theorem and the continuity of weakly differentiable mappings on a one-dimensional domain;
our argument in \S \ref{section_equiint} relies on techniques for homotopy of maps with vanishing mean oscillation due to Schoen and Uhlenbeck \cite{Schoen_Uhlenbeck_1983} and to Brezis and Nirenberg \cite{Brezis_Nirenberg_1995} combined with suitable Fubini and truncation arguments.

\Cref{theorem_W1p_equiintegrable_strong} is not an isolated result.
In \S \ref{section_higher_order}, we prove that it extends to \emph{higher-order Sobolev spaces} \(\sobolev^{k, p}\brk{\manifold{M}, \manifold{N}}\) with \(k \in \Nset\setminus \set{0}\)  and \(p \in \intvr{1}{\infty}\).
For \emph{fractional Sobolev spaces} \(\sobolev^{s, p} \brk{\manifold{M}, \manifold{N}}\) with \(s \in \intvo{0}{\infty}\setminus \Nset\) and \(p \in \intvr{1}{\infty}\) the equivalence also holds, but for quite trivial reasons, as pointwise convergent equi-integrable sequences converge strongly (see \cref{proposition_Wsp_strong_equi_integrable} below).

For some target manifolds, Bethuel, Coron, Demengel and Hélein have described the local obstructions to the strong approximation through a \emph{distributional Jacobian} \citelist{\cite{Bethuel_1990}\cite{Demengel_1990}} or more \emph{general cohomological criterion} \cite{Bethuel_Coron_Demengel_Helein_1991}.
We show in \S \ref{section_jacobian} that \cref{theorem_W1p_equiintegrable_strong} can then be deduced from these criteria and the \emph{equi-integrable sequential continuity of Jacobians,} generalizing Hang’s argument for the circle \cite{Hang_2002}*{First proof of Example 2.1}.

\section{First-order Sobolev mappings}
\label{section_equiint}

In order to prove \cref{theorem_W1p_equiintegrable_strong}, we will prove in this section that equi-integrable convergence in \(\sobolev^{1, p} \brk{\manifold{M}, \manifold{N}}\) preserves the homotopy on \(p\)-dimensional skeletons.

\subsection{Vanishing mean oscillation homotopies}
Given a compact manifold \(\manifold{M}\), the set of functions of \emph{vanishing mean oscillation} is defined as \cite{Brezis_Nirenberg_1995}
\begin{equation*}
 \VMO \brk{\manifold{M}, \Rset^\nu}
 \defeq
 \set[\Big]{ u \in \lebesgue^1 \brk{\manifold{M}, \Rset^\nu}\st
 \lim_{r \to 0} \sup_{x \in \manifold{M}}
 \fint_{B_r \brk{x}} \fint_{B_r \brk{x}} \abs{u \brk{y} - u \brk{z}} \dif y \dif z = 0}
 \eqpunct;
\end{equation*}
the space
\(\VMO \brk{\manifold{M}, \Rset^\nu}\) is a Banach space with the \emph{bounded mean oscillation} norm
\[
 \norm{u}_{\BMO}
 = \int_{\manifold{M}} \abs{u}
 + \sup_{0 < r < \rho}
 \sup_{x \in \manifold{M}}
 \fint_{B_r \brk{x}} \fint_{B_r \brk{x}} \abs{u \brk{y} - u \brk{z}} \dif y \dif z\eqpunct,
\]
where \(0 < \rho <\inj \brk{\manifold{M}}\) is fixed and \(\inj \brk{\manifold{M}}\) is the injectivity radius of the manifold \(\manifold{M}\);
one defines then
\[
  \VMO \brk{\manifold{M}, \manifold{N}}
 \defeq
 \set{u \in \VMO \brk{\manifold{M}, \Rset^\nu}
 \st u \in \manifold{N} \text{ almost everywhere in }\manifold{M}}
 \eqpunct.
\]

Two maps \(u_0\) and \(u_1 \in \VMO \brk{\manifold{M}, \manifold{N}}\) are defined to be \emph{homotopic in \(\VMO \brk{\manifold{M}, \manifold{N}}\)} whenever there exists a mapping \(H \in \continuous \brk{\intvc{0}{1}, \VMO \brk{\manifold{M}, \manifold{N}}}\) such that \(H \brk{0, \cdot} = u_0\) and \(H \brk{1, \cdot} = u_1\).
Brezis and Nirenberg have proved that the homotopies in \(\VMO \brk{\manifold{M}, \manifold{N}}\) are essential equivalent to the classical ones in \(\continuous \brk{\manifold{M}, \manifold{N}}\) \cite{Brezis_Nirenberg_1995} (see also \cite{Abbondandolo_1996}).

\begin{proposition}
If \(\manifold{M}\) and \(\manifold{N}\) are compact Riemannian manifolds, then 
\begin{enumerate}[label=(\roman*)]
 \item every \(u \in \VMO \brk{\manifold{M}, \manifold{N}}\) is homotopic in \(\VMO \brk{\manifold{M}, \manifold{N}}\) to some \(v \in \continuous \brk{\manifold{M}, \manifold{N}}\),
 \item for every \(u_0\)  and \(u_1 \in \continuous \brk{\manifold{M}, \manifold{N}}\), the mappings
 \(u_0\) and \(u_1\) are homotopic in \(\VMO \brk{\manifold{M}, \manifold{N}}\) if and only if they are homotopic in \(\continuous \brk{\manifold{M}, \manifold{N}}\).
\end{enumerate}
\end{proposition}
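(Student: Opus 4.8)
I would deduce both statements from a single regularisation device: average a \(\VMO\)-map over small geodesic balls and then compose with the nearest-point retraction of a tubular neighbourhood of \(\manifold{N}\), as in Schoen--Uhlenbeck \cite{Schoen_Uhlenbeck_1983} and Brezis--Nirenberg \cite{Brezis_Nirenberg_1995}. Fix a tubular neighbourhood \(\manifold{N}_\delta \defeq \set{y \in \Rset^\nu \st \dist \brk{y, \manifold{N}} < \delta}\) of \(\manifold{N}\) with its smooth nearest-point retraction \(\Pi \colon \manifold{N}_\delta \to \manifold{N}\), so that \(\Pi \restr{\manifold{N}} = \id\), \(\abs{\Pi\brk{y} - y} = \dist\brk{y, \manifold{N}}\), and \(\Pi\) is Lipschitz on \(\overline{\manifold{N}_{\delta/2}}\). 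For \(w \in \lebesgue^1 \brk{\manifold{M}, \Rset^\nu}\) and \(0 < \varepsilon < \rho\) set \(A_\varepsilon w\brk{x} \defeq \fint_{B_\varepsilon\brk{x}} w\), and for \(w \in \VMO\brk{\manifold{M}, \Rset^\nu}\) set
\[
 \omega_w\brk{r} \defeq \sup_{0 < r' \le r}\; \sup_{x \in \manifold{M}}\; \fint_{B_{r'}\brk{x}} \fint_{B_{r'}\brk{x}} \abs{w\brk{y} - w\brk{z}} \dif y \dif z ,
\]
which tends to \(0\) as \(r \to 0\) precisely because \(w \in \VMO\). I would first record the routine averaging facts:
\begin{enumerate}[label=(\alph*)]
 \item \(A_\varepsilon w \in \continuous\brk{\manifold{M}, \Rset^\nu}\), and if \(w\) is \(\manifold{N}\)-valued almost everywhere, then \(\dist\brk{A_\varepsilon w\brk{x}, \manifold{N}} \le \omega_w\brk{\varepsilon}\) for every \(x \in \manifold{M}\);
 \item for fixed \(\varepsilon\), the map \(w \mapsto A_\varepsilon w\) is Lipschitz from \(\lebesgue^1 \brk{\manifold{M}, \Rset^\nu}\) into \(\continuous \brk{\manifold{M}, \Rset^\nu}\); for fixed \(w\), the map \(\varepsilon \mapsto A_\varepsilon w\) is continuous from \(\intvo{0}{\rho}\) into \(\continuous\brk{\manifold{M}, \Rset^\nu}\); and \(\norm{A_\varepsilon w - w}_{\lebesgue^\infty} \le \mu_w\brk{\varepsilon}\) if \(w\) is continuous with modulus of continuity \(\mu_w\);
 \item \(\norm{A_\varepsilon w - w}_{\BMO} \to 0\) as \(\varepsilon \to 0\), for every \(w \in \VMO\brk{\manifold{M}, \Rset^\nu}\) (the classical fact that averaging converges in the \(\BMO\) norm on \(\VMO\));
 \item \(\norm{\,\cdot\,}_{\BMO} \le C_{\manifold{M}}\norm{\,\cdot\,}_{\lebesgue^\infty}\), and, since \(\omega_w\brk{r} \le \omega_{w'}\brk{r} + \norm{w - w'}_{\BMO}\), every norm-compact subset of \(\VMO\brk{\manifold{M}, \Rset^\nu}\) is equi-\(\VMO\) (cover it by finitely many \(\BMO\)-balls).
\end{enumerate}

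\emph{Statement (i).} I would pick \(\varepsilon_0 \in \intvo{0}{\rho}\) with \(\omega_u\brk{\varepsilon_0} < \delta/2\); by (a) the map \(A_s u\) is then \(\manifold{N}_{\delta/2}\)-valued for \(0 < s \le \varepsilon_0\), so \(h\brk{s} \defeq \Pi \circ A_s u \in \continuous\brk{\manifold{M}, \manifold{N}} \subseteq \VMO\brk{\manifold{M}, \manifold{N}}\), and I set \(h\brk{0} \defeq u\), so that \(h\) is defined on \(\intvc{0}{\varepsilon_0}\). By (b), the Lipschitz bound on \(\Pi\) and (d), \(h\) is continuous on \(\intvl{0}{\varepsilon_0}\) into \(\VMO\brk{\manifold{M}, \manifold{N}}\), and the only delicate point is continuity at \(s = 0\); I would settle it by splitting \(h\brk{s} - u = \brk{\Pi - \id}\circ A_s u + \brk{A_s u - u}\) and bounding \(\norm{\brk{\Pi - \id}\circ A_s u}_{\BMO} \le C_{\manifold{M}}\norm{\brk{\Pi - \id}\circ A_s u}_{\lebesgue^\infty} = C_{\manifold{M}}\sup_{x}\dist\brk{A_s u\brk{x}, \manifold{N}} \le C_{\manifold{M}}\omega_u\brk{s} \to 0\) via (a) and (d), while \(\norm{A_s u - u}_{\BMO} \to 0\) by (c). After reparametrising \(\intvc{0}{\varepsilon_0}\) onto \(\intvc{0}{1}\), this exhibits \(u\) as homotopic in \(\VMO\brk{\manifold{M}, \manifold{N}}\) to \(v \defeq \Pi \circ A_{\varepsilon_0} u \in \continuous\brk{\manifold{M}, \manifold{N}}\).

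\emph{Statement (ii).} One implication is easy: if \(h \in \continuous\brk{\intvc{0}{1} \times \manifold{M}, \manifold{N}}\) is a homotopy of \(u_0\) to \(u_1\), then \(t \mapsto h\brk{t, \cdot}\) is continuous from \(\intvc{0}{1}\) into \(\brk[\big]{\continuous\brk{\manifold{M}, \manifold{N}}, \norm{\,\cdot\,}_{\lebesgue^\infty}}\) by uniform continuity of \(h\), hence into \(\VMO\brk{\manifold{M}, \manifold{N}}\) by (d). For the converse, let \(H \in \continuous\brk{\intvc{0}{1}, \VMO\brk{\manifold{M}, \manifold{N}}}\) join \(u_0\) to \(u_1\); its image is compact, hence equi-\(\VMO\) by (d), and I would fix \(\varepsilon_1 \in \intvo{0}{\rho}\) small enough that \(\sup_{\tau \in \intvc{0}{1}}\omega_{H\brk{\tau}}\brk{\varepsilon_1} < \delta/2\) and, at the same time — since \(u_0, u_1\) are continuous — small enough that \(\norm{A_{\varepsilon_1}u_i - u_i}_{\lebesgue^\infty} < \delta/\brk{2 L}\), where \(L\) is a Lipschitz constant for \(\Pi\). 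Then, by (a), each \(A_{\varepsilon_1}H\brk{\tau}\) is \(\manifold{N}_{\delta/2}\)-valued, so, using (b) and the Lipschitz bound on \(\Pi\), the path \(\tau \mapsto \Pi \circ A_{\varepsilon_1}H\brk{\tau}\) is a homotopy in \(\continuous\brk{\manifold{M}, \manifold{N}}\) from \(\tilde u_0 \defeq \Pi \circ A_{\varepsilon_1}u_0\) to \(\tilde u_1 \defeq \Pi \circ A_{\varepsilon_1}u_1\); and for \(i \in \set{0, 1}\), as \(\norm{\tilde u_i - u_i}_{\lebesgue^\infty} \le L\norm{A_{\varepsilon_1}u_i - u_i}_{\lebesgue^\infty} < \delta/2\), the map \(\brk{t, x} \mapsto \Pi\brk{\brk{1 - t}u_i\brk{x} + t\tilde u_i\brk{x}}\) is a homotopy in \(\continuous\brk{\manifold{M}, \manifold{N}}\) from \(u_i\) to \(\tilde u_i\). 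Concatenating the three homotopies shows that \(u_0\) and \(u_1\) are homotopic in \(\continuous\brk{\manifold{M}, \manifold{N}}\).

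\emph{Main obstacle.} I expect the real difficulty to sit in the uniform estimate of (d) — that a \(\VMO\)-homotopy has equi-\(\VMO\) image — since it is exactly this that lets the averaging-and-projection regularisation be run along an entire homotopy at one fixed small scale, keeping every regularised map inside the tubular neighbourhood so that composition with \(\Pi\) is legitimate for the whole family at once. The other delicate step is the continuity at \(s = 0\) in (i): averaging a \(\VMO\)-map does \emph{not} bring it uniformly close to the original, but the \emph{defect} \(\Pi \circ A_s u - A_s u = \brk{\Pi - \id}\circ A_s u\) \emph{is} uniformly small, because \(\Pi - \id\) vanishes on \(\manifold{N}\) and is pointwise controlled by the distance to \(\manifold{N}\), hence by \(\omega_u\brk{s}\); combined with the classical \(\BMO\)-convergence of (c), this closes the argument. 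Everything else should be routine bookkeeping with averages over geodesic balls.
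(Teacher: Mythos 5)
The paper does not give a proof of this proposition: it is stated as a recalled fact, attributed to Brezis--Nirenberg \cite{Brezis_Nirenberg_1995} and Abbondandolo \cite{Abbondandolo_1996}, so there is no in-paper argument to compare against. Your proof correctly reconstructs the standard Brezis--Nirenberg regularisation scheme (mollify by geodesic-ball averages, control the distance to \(\manifold{N}\) by the \(\VMO\) modulus, compose with the nearest-point retraction) and is consistent with the mechanism that the paper itself uses just afterwards in the proof of \cref{proposition_homotopy_VMO_Rp}.

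The three technical ingredients you isolate are the right ones and are handled correctly: the decomposition \(h\brk{s} - u = \brk{\Pi - \id}\circ A_s u + \brk{A_s u - u}\) cleanly separates the \(\lebesgue^\infty\)-small defect created by projecting from the \(\BMO\)-small averaging error, resolving the continuity at \(s = 0\); the inequality \(\omega_w\brk{r} \le \omega_{w'}\brk{r} + \norm{w - w'}_{\BMO}\) and the finite-subcover argument correctly deliver equi-\(\VMO\) of the image of the homotopy, which is exactly what lets a single scale \(\varepsilon_1\) work along the whole path; and the endpoint corrections \(\brk{t, x}\mapsto \Pi\brk{\brk{1 - t}u_i\brk{x} + t\tilde u_i\brk{x}}\) are legitimate since the segments stay in \(\manifold{N}_{\delta/2}\). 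One small point worth making explicit when you concatenate: continuity of \(\tau \mapsto A_{\varepsilon_1}H\brk{\tau}\) into \(\continuous\brk{\manifold{M}, \Rset^\nu}\) uses that the \(\BMO\) norm (as defined in the paper, with the \(\lebesgue^1\) term) dominates the \(\lebesgue^1\) norm, so that the \(\lebesgue^1\)-Lipschitz bound in (b) applies along the \(\VMO\)-continuous path; you use this implicitly and it is correct. Overall the argument is sound and matches what the cited sources do.
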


Our main tool is the following sufficient condition for homotopy in \(\VMO \brk{\manifold{M}, \manifold{N}}\), which goes essentially back to Brezis and Nirenberg's work \cite{Brezis_Nirenberg_1995}*{Lem.\ A.19}.

\begin{proposition}
\label{proposition_homotopy_VMO_Rp}
If \(\manifold{M}\) and \(\manifold{N}\) are compact Riemannian manifolds, there exists \(\eta \in \intvo{0}{\infty}\) such that if \(u_0\) and \(u_1 \in \VMO \brk{\manifold{M}, \manifold{N}}\) and if \(\rho \in \intvo{0}{\inj \brk{\manifold{M}}}\) satisfy
\begin{equation}
\label{eq_epif6eeyazeijaiTohpheepa}
\sum_{j = 0}^1
\sup_{\substack{x \in \manifold{M}\\ 0 < r < \rho}}
\fint_{B_r \brk{x}} \fint_{B_r \brk{x}} \abs{u_j \brk{y} - u_j \brk{z}}\dif y \dif z
+ \fint_{B_\rho \brk{x}} \abs{u_0 - u_1} \le \eta\eqpunct,
\end{equation}
then \(u_0\) and \(u_1\) are homotopic in \(\VMO \brk{\manifold{M}, \manifold{N}}\).
\end{proposition}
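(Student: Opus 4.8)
The plan is to construct the homotopy explicitly, following the strategy of Brezis and Nirenberg \cite{Brezis_Nirenberg_1995}: average \(u_0\) and \(u_1\) over small geodesic balls and compose with the nearest-point projection onto \(\manifold{N}\). First I would fix \(\delta_0 \in \intvo{0}{\infty}\), depending only on \(\manifold{N}\), such that the nearest-point projection \(\Pi\) is well defined and smooth on the tubular neighbourhood \(\manifold{V} \defeq \set{y \in \Rset^\nu \st \dist \brk{y, \manifold{N}} < \delta_0}\) -- which is possible since \(\manifold{N}\) is compact -- and set \(\eta \defeq \delta_0 / 2\). For \(u \in \lebesgue^1 \brk{\manifold{M}, \Rset^\nu}\) and \(0 < r < \inj \brk{\manifold{M}}\), write \(u_r \brk{x} \defeq \fint_{B_r \brk{x}} u\); since \(\rho < \inj \brk{\manifold{M}}\), a routine dominated-convergence argument shows that \(\brk{r, x} \mapsto u_r \brk{x}\) is continuous. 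The elementary observation that drives everything is that, under \eqref{eq_epif6eeyazeijaiTohpheepa}, for \(j \in \set{0, 1}\) and \(0 < r \le \rho\) (the endpoint \(r = \rho\) by continuity in \(r\)) one has \(\dist \brk{\brk{u_j}_r \brk{x}, \manifold{N}} \le \fint_{B_r \brk{x}} \fint_{B_r \brk{x}} \abs{u_j \brk{y} - u_j \brk{z}} \dif y \dif z \le \eta < \delta_0\), obtained by averaging over \(z \in B_r \brk{x}\) the bound \(\dist \brk{\brk{u_j}_r \brk{x}, \manifold{N}} \le \abs{\brk{u_j}_r \brk{x} - u_j \brk{z}}\), valid for almost every such \(z\) since \(u_j \brk{z} \in \manifold{N}\); hence \(\Pi \compose \brk{u_j}_r \in \continuous \brk{\manifold{M}, \manifold{N}}\) for every such \(r\).

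I would then obtain \(H\) by concatenating three pieces: one from \(u_0\) to \(\Pi \compose \brk{u_0}_\rho\), one from \(\Pi \compose \brk{u_0}_\rho\) to \(\Pi \compose \brk{u_1}_\rho\), and one from \(\Pi \compose \brk{u_1}_\rho\) to \(u_1\). The middle piece is the straight-line homotopy \(t \mapsto \Pi \compose \bigl( \brk{1 - t} \brk{u_0}_\rho + t \brk{u_1}_\rho \bigr)\): for every \(x\) the intermediate point lies within \(\dist \brk{\brk{u_0}_\rho \brk{x}, \manifold{N}} + \abs{\brk{u_1}_\rho \brk{x} - \brk{u_0}_\rho \brk{x}} \le \eta + \fint_{B_\rho \brk{x}} \abs{u_1 - u_0} \le 2 \eta < \delta_0\) of \(\manifold{N}\), using the bound above together with \eqref{eq_epif6eeyazeijaiTohpheepa}, so that \(\Pi\) applies throughout; being jointly continuous it is continuous into \(\continuous \brk{\manifold{M}, \manifold{N}}\), hence into \(\VMO \brk{\manifold{M}, \manifold{N}}\) since uniform convergence implies \(\BMO\)-convergence. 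The first piece shrinks the averaging radius down to zero: I set \(H \brk{t, \cdot} \defeq \Pi \compose \brk{u_0}_{t \rho}\) for \(t \in \intvl{0}{1}\) and \(H \brk{0, \cdot} \defeq u_0\); on \(\intvl{0}{1}\) this is continuous into \(\continuous \brk{\manifold{M}, \manifold{N}}\) by joint continuity of \(\brk{t, x} \mapsto \brk{u_0}_{t \rho} \brk{x}\), and at \(t = 0\) I would split \(\Pi \compose \brk{u_0}_\sigma - u_0 = \bigl( \brk{\Pi - \id} \compose \brk{u_0}_\sigma \bigr) + \bigl( \brk{u_0}_\sigma - u_0 \bigr)\): the \(\lebesgue^\infty\)-norm of the first bracket equals \(\sup_x \dist \brk{\brk{u_0}_\sigma \brk{x}, \manifold{N}}\), which tends to \(0\) as \(\sigma \to 0\) because \(u_0 \in \VMO\), so this bracket tends to \(0\) in \(\BMO\) too, whereas the second bracket tends to \(0\) in the \(\BMO\)-norm by the standard fact that the geodesic-ball averages of a \(\VMO\) map converge to it in \(\BMO\). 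The third piece is the first one built from \(u_1\), run in reverse. Concatenating the three gives \(H \in \continuous \brk{\intvc{0}{1}, \VMO \brk{\manifold{M}, \manifold{N}}}\) with \(H \brk{0, \cdot} = u_0\) and \(H \brk{1, \cdot} = u_1\).

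The step I expect to be the main obstacle is the continuity of the first and third pieces at the parameter value where the averaging radius degenerates, since there one genuinely leaves the class of continuous maps: it requires that the mollifications \(\brk{u_j}_\sigma\) converge to \(u_j\) not merely in \(\lebesgue^1\), nor merely in mean oscillation, but in the full \(\BMO\)-norm -- which is precisely what makes \(\VMO \brk{\manifold{M}, \manifold{N}}\) the \(\BMO\)-closure of \(\continuous \brk{\manifold{M}, \manifold{N}}\), and this is where the standing assumptions \(u_0, u_1 \in \VMO \brk{\manifold{M}, \manifold{N}}\) are used rather than the quantitative bound \eqref{eq_epif6eeyazeijaiTohpheepa}. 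By contrast, the role of \eqref{eq_epif6eeyazeijaiTohpheepa}, and in particular of the term \(\fint_{B_\rho \brk{x}} \abs{u_1 - u_0}\), is purely to keep every ball average, and every point of the straight-line homotopy between \(\brk{u_0}_\rho\) and \(\brk{u_1}_\rho\), inside the tubular neighbourhood \(\manifold{V}\), so that \(\Pi\) may be applied along the whole deformation. The remaining verifications -- joint continuity of geodesic-ball averages, the continuity of the inclusion \(\continuous \brk{\manifold{M}, \manifold{N}} \subseteq \VMO \brk{\manifold{M}, \manifold{N}}\), and the continuity of a finite concatenation of paths -- are routine.
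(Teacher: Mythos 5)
Your proof is correct and follows exactly the paper's strategy: a three-piece concatenation (shrinking the averaging radius from $\rho$ to $0$ for $u_0$, linearly interpolating between $(u_0)_\rho$ and $(u_1)_\rho$, and shrinking the radius in reverse for $u_1$), all post-composed with the nearest-point retraction $\Pi$. You actually supply more detail than the paper's one-line verification -- notably the continuity of the path in $\mathrm{BMO}$ at the endpoints where the averaging radius degenerates -- though the choice $\eta \defeq \delta_0/2$ is borderline (the middle piece needs $2\eta < \delta_0$) and should be taken strictly smaller, say $\eta \defeq \delta_0/3$.
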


The proof of \cref{proposition_homotopy_VMO_Rp} is based on Schoen and Uhlenbeck’s averaging argument to estimate the distance of the average \cite{Schoen_Uhlenbeck_1983}.

\begin{proof}[Proof of \cref{proposition_homotopy_VMO_Rp}]
For every \(r \in \intvo{0}{\inj \brk{\manifold{M}}}\) and \(j \in \set{0, 1}\), we define the function
\(u_j^r \colon \manifold{M} \to \Rset^\nu\) for each \(x \in \manifold{M}\) by
\begin{equation*}
  u_j^r \brk{x} \defeq \fint_{B_r \brk{x}} u_j \eqpunct.
\end{equation*}
It follows from our assumption \eqref{eq_epif6eeyazeijaiTohpheepa} that if \(r < \rho\), if \(x \in \manifold{M}\) and if \(j \in \set{0, 1}\) we have
\begin{equation*}
 \dist \brk{u_j^r \brk{x}, \manifold{N}}
 \le \fint_{B_r \brk{x}} \fint_{B_r \brk{x}} \abs{u_j \brk{y} - u_j \brk{z}} \dif y \dif z
 \le \eta \eqpunct{,}
\end{equation*}
and that for every \(x \in \manifold{M}\)
\begin{equation*}
  \abs{u_0^\rho \brk{x} - u_1^\rho \brk{x}}
  \le  \fint_{B_\rho\brk{x}} \abs{u_0 - u_1}
  \leq
  \eta \eqpunct{.}
\end{equation*}
If \(\eta\) is sufficiently small, we get a homotopy by defining \(H \colon \intvc{0}{1} \times \manifold{M} \to \Rset^\nu\) for every \(t \in \intvc{0}{1}\) and \(x \in \manifold{M}\) by
\[
	H\brk{t,x} = \begin{cases}
	    u_0 \brk{x}&\text{if \(t = 0\),}\\
		\Pi_{\manifold{N}} \brk{u_{0}^{3t\rho}\brk{x}} & \text{if \( 0 < t \leq 1/3 \),} \\
		\Pi_{\manifold{N}}\brk{\brk{3t-1}u_{1}^{\rho}\brk{x} + \brk{2-3t}u_{0}^{\rho}\brk{x}} & \text{if \( 1/3 \leq t \leq 2/3 \),} \\
		\Pi_{\manifold{N}} \brk{u_{1}^{3\brk{1 - t}\rho}\brk{x}} & \text{if \( 2/3 \leq t < 1 \),}\\
		u_1 \brk{x}&\text{if \(t = 1\).}\\
	\end{cases}
\]
where \( \Pi_{\manifold{N}} \) is a Lipschitz-continuous retraction of a neighbourhood of \(\manifold{N}\) in \(\Rset^\nu\) onto \( \manifold{N} \).
\resetconstant
\end{proof}

\subsection{Equi-integrability and homotopies}
We first prove that when \(p = \dim \manifold{M}\), the equi-integrable convergence preserves homotopies in \(\VMO \brk{\manifold{M}, \manifold{N}}\).

\begin{proposition}
\label{proposition_homotopy_equiintegrable_Rp}
If \(\manifold{M}\) and \(\manifold{N}\) are compact Riemannian manifolds and if \(p = \dim \manifold{M}\), then there exists \(\eta \in \intvo{0}{\infty}\) such that if \(u_0\), \(u_1 \in \sobolev^{1, p} \brk{\manifold{M}, \manifold{N}}\) and \(\rho \in \intvo{0}{\operatorname{inj}\brk{\manifold{M}}}\) satisfy
\[
 \int_{\manifold{M}} \brk[\Big]{\abs{\Deriv u_0} - \frac{\eta}{\rho}}_+^p
 + \brk[\Big]{\abs{\Deriv u_1} - \frac{\eta}{\rho}}_+^p
 + \frac{\brk{\abs{u_0 - u_1} - \eta}_+}{\rho^p} \le \eta
 \eqpunct{,}
\]
then \(u_0\) and \(u_1\) are homotopic in \(\VMO \brk{\manifold{M}, \manifold{N}}\).
\end{proposition}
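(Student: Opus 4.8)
The plan is to reduce the statement to \cref{proposition_homotopy_VMO_Rp}: I will show that, once \(\eta\) is small enough, the assumption on the truncated energies forces the hypothesis \eqref{eq_epif6eeyazeijaiTohpheepa} of that proposition. Two quantities have to be controlled, namely the mean oscillation of \(u_0\) and \(u_1\) over all geodesic balls of radius less than \(\rho\), and the average of \(\abs{u_1 - u_0}\) over balls of radius \(\rho\). First I would record that \(u_0, u_1 \in \VMO\brk{\manifold{M}, \manifold{N}}\) — needed even to speak of a \(\VMO\) homotopy — which follows from the critical Sobolev embedding \(\sobolev^{1, p}\brk{\manifold{M}} \hookrightarrow \VMO\brk{\manifold{M}}\) when \(p = \dim \manifold{M}\), or directly from the Poincaré estimate below together with the absolute continuity of \(A \mapsto \int_A \abs{\Deriv u_j}^p\). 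The structural input used throughout is that, because \(p = \dim \manifold{M}\), the \(\lebesgue^p\)-Poincaré inequality on geodesic balls is scale invariant: by compactness of \(\manifold{M}\) there is a constant \(C \ge 1\) (which I allow to vary from line to line, depending only on \(\manifold{M}\)) with \(C^{-1} r^{p} \le \abs{B_r\brk{x}} \le C r^{p}\) and
\[
  \fint_{B_r\brk{x}} \fint_{B_r\brk{x}} \abs{u\brk{y} - u\brk{z}} \dif y \dif z
  \le C\, r \Bigl(\fint_{B_r\brk{x}} \abs{\Deriv u}^p \Bigr)^{1/p}
  \le C \Bigl(\int_{B_r\brk{x}} \abs{\Deriv u}^p \Bigr)^{1/p}
\]
for every \(x \in \manifold{M}\), every \(0 < r < \inj\brk{\manifold{M}}\) and every \(u \in \sobolev^{1, p}\brk{\manifold{M}, \Rset^\nu}\), the last inequality using \(\abs{B_r\brk{x}}^{-1/p} \le C/r\).

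For the mean-oscillation term I would split, pointwise, \(\abs{\Deriv u_j} = \min\set{\abs{\Deriv u_j}, \eta/\rho} + \brk{\abs{\Deriv u_j} - \eta/\rho}_+\) and apply the estimate above on the ball, obtaining
\[
  \fint_{B_r\brk{x}} \fint_{B_r\brk{x}} \abs{u_j\brk{y} - u_j\brk{z}}
  \le C\, \frac{\eta}{\rho}\, \abs{B_r\brk{x}}^{1/p}
  + C \Bigl(\int_{\manifold{M}} \brk[\big]{\abs{\Deriv u_j} - \tfrac{\eta}{\rho}}_+^p \Bigr)^{1/p}.
\]
Here \(\frac{\eta}{\rho}\abs{B_r\brk{x}}^{1/p} \le C \eta \, (r/\rho) \le C\eta\) because \(r < \rho\) — this is exactly where the critical exponent matters, since it is \(\abs{B_r\brk{x}}^{1/p} \approx r\) that lets the small factor \(r/\rho\) survive — while the remaining term is at most \(C\eta^{1/p}\) by the assumption. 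For the term \(\fint_{B_\rho\brk{x}} \abs{u_1 - u_0}\) I would split \(\abs{u_1 - u_0} \le \eta + \brk{\abs{u_1 - u_0} - \eta}_+\) and use \(\abs{B_\rho\brk{x}} \ge C^{-1}\rho^{p}\), so that
\[
  \fint_{B_\rho\brk{x}} \abs{u_1 - u_0}
  \le \eta + \frac{C}{\rho^{p}} \int_{\manifold{M}} \brk{\abs{u_1 - u_0} - \eta}_+
  \le \eta + C\eta
\]
again by the assumption; note that the weight \(\rho^{-p}\) in the hypothesis is precisely tuned to \(\abs{B_\rho\brk{x}}^{-1}\) in the critical dimension.

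Adding these contributions, the left-hand side of \eqref{eq_epif6eeyazeijaiTohpheepa} is bounded by \(\Psi\brk{\eta} \defeq C\brk{\eta + \eta^{1/p}}\), which depends only on \(\manifold{M}\) and satisfies \(\lim_{\eta \to 0^+}\Psi\brk{\eta} = 0\); choosing \(\eta\) small enough that \(\Psi\brk{\eta}\) does not exceed the threshold furnished by \cref{proposition_homotopy_VMO_Rp} (a quantity depending on \(\manifold{M}\) and \(\manifold{N}\)) then gives the asserted homotopy in \(\VMO\brk{\manifold{M}, \manifold{N}}\). I do not expect a genuine obstacle here: the argument hinges entirely on the critical exponent \(p = \dim \manifold{M}\), which makes the Poincaré bound scale invariant and matches \(\abs{B_\rho\brk{x}}^{-1}\) with the normalization appearing in the hypothesis — this is also why the statement is confined to the critical dimension, the subcritical range \(p < \dim \manifold{M}\) requiring the skeleton and Fubini arguments developed later in the section.
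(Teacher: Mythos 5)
Your proposal is correct and follows essentially the same route as the paper's proof: reduce to \cref{proposition_homotopy_VMO_Rp}, control the mean oscillation via a scale-invariant Poincaré inequality at the critical exponent \(p = \dim\manifold{M}\), and split using the truncations built into the hypothesis. The only difference is cosmetic — the paper uses the \(\lebesgue^1\)-Poincaré inequality, splits \(\abs{\Deriv u_j}\) at level \(\eta/\rho\) in \(\lebesgue^1\), and then applies Hölder, whereas you apply the \(\lebesgue^p\)-Poincaré inequality directly and split via Minkowski in \(\lebesgue^p\); both yield the same bound \(C\brk{\eta + \eta^{1/p}}\).
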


The proof of \cref{proposition_homotopy_equiintegrable_Rp} combines \cref{proposition_homotopy_VMO_Rp} with Schoen and Uhlenbeck's estimate on the mean oscillation by the critical Sobolev energy through Poincaré's inequality \cite{Schoen_Uhlenbeck_1983} with a truncation argument.

\begin{proof}[Proof of \cref{proposition_homotopy_equiintegrable_Rp}]
By the Poincaré and Hölder inequalities, if \(2 r < \rho <\inj \brk{\manifold{M}}\), we have for each \(j \in \set{0, 1}\)
\[
\begin{split}
 \fint_{B_r \brk{x}} \fint_{B_r \brk{x}} \abs{u_j \brk{y} - u_j \brk{z}}\dif y \dif z
 &\le \Cl{cst_Tohf1zeeKixoh1aePeim8ZoR} r \fint_{B_r \brk{x}} \abs{\Deriv u_j}\\
 &\le \Cr{cst_Tohf1zeeKixoh1aePeim8ZoR} r \fint_{B_r \brk{x}} \brk[\Big]{\abs{\Deriv u_j} - \frac{\eta}{\rho}}_+ + \Cl{cst_quui2chaetheiNei4aeP0tai} \eta\frac{r}{\rho}\\
 &\le \Cr{cst_Tohf1zeeKixoh1aePeim8ZoR} r\,  \brk[\bigg]{ \fint_{B_r \brk{x}} \brk[\Big]{ \abs{\Deriv u_j}^p - \frac{\eta}{\rho}}_+^p }^{\frac{1}{p}}
 + \Cr{cst_quui2chaetheiNei4aeP0tai}\eta\frac{r}{\rho}\\
 &\le \Cl{cst_eijeighahNgaeghaiv7ThooV} \brk[\bigg]{\int_{\manifold{M}} \brk[\Big]{\abs{\Deriv u_j} - \frac{\eta}{\rho}}_+^p}^{\frac{1}{p}} + \Cr{cst_quui2chaetheiNei4aeP0tai} \eta
 \eqpunct.
 \end{split}
\]
On the other hand, we have
\[
\begin{split}
 \fint_{B_\rho \brk{x}} \abs{u_0 - u_1}
 &\le \eta + \fint_{B_\rho \brk{x}} \brk{\abs{u_0 - u_1} - \eta}_+ \\
&\le \eta + \C \int_{B_\rho \brk{x}} \frac{\brk{\abs{u_0 - u_1} - \eta}_+}{\rho^p} \eqpunct{.}
\end{split}
\]
We conclude then thanks to \cref{proposition_homotopy_VMO_Rp} provided \(\eta > 0\) is taken  small enough.
\resetconstant
\end{proof}

As an immediate consequence of \cref{proposition_homotopy_equiintegrable_Rp}, we get the following homotopy criterion in critical dimension \(\dim \manifold{M} = p\).

\begin{proposition}
If \(\manifold{M}\) and \(\manifold{N}\) are compact Riemannian manifolds and if \(p = \dim \manifold{M}\),
then there exists \(\eta \in \intvo{0}{\infty}\) such that if \(\brk{u_n}_{n \in \Nset}\) is a sequence in \(\sobolev^{1, p}\brk{\manifold{M}, \manifold{N}}\), if \(u \in \sobolev^{1, p}\brk{\manifold{M}, \manifold{N}}\),
if
\begin{equation}
\label{eq_cohhaNg4xahroich6QueHeov}
 \lim_{n \to \infty} \int_{\manifold{M}} \brk{\abs{u_n - u} - \eta}_+ = 0\eqpunct{,}
\end{equation}
and if 
\begin{equation}
\label{eq_Bo5ahqu1fe6aiCh4anaecahx}
 \lim_{t \to \infty}
 \sup_{n \in \Nset}
 \int_{\manifold{M}} \brk{\abs{\Deriv u_n} - t}_+^p \le \eta \eqpunct{,}
\end{equation}
then, for \(n \in \Nset\) large enough, the maps  \(u\) and \(u_n\) are homotopic in \(\VMO \brk{\manifold{M}, \manifold{N}}\).
\end{proposition}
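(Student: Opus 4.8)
The plan is to apply \cref{proposition_homotopy_equiintegrable_Rp} with \(u_0 \defeq u\) and \(u_1 \defeq u_n\), for a single radius \(\rho\) chosen first and then for \(n\) large. Let \(\bar\eta \in \intvo{0}{\infty}\) be the constant furnished by \cref{proposition_homotopy_equiintegrable_Rp}, and take the constant in the present statement to be \(\eta \defeq \bar\eta/4\) (any smaller positive number works equally well).

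To choose \(\rho\): since \(u \in \sobolev^{1, p}\brk{\manifold{M}, \manifold{N}}\), the function \(\abs{\Deriv u}^p\) is integrable, and hence \(\int_{\manifold{M}}\brk{\abs{\Deriv u} - t}_+^p \to 0\) as \(t \to \infty\) by dominated convergence; combining this with hypothesis \eqref{eq_Bo5ahqu1fe6aiCh4anaecahx}, there is a threshold \(T \in \intvo{0}{\infty}\) with
\[
 \int_{\manifold{M}}\brk{\abs{\Deriv u} - t}_+^p \le \eta
 \quad\text{and}\quad
 \sup_{n \in \Nset}\int_{\manifold{M}}\brk{\abs{\Deriv u_n} - t}_+^p \le 2\eta
 \qquad\text{for all } t \ge T \eqpunct.
\]
I would then fix \(\rho \in \intvo{0}{\inj\brk{\manifold{M}}}\) small enough that \(\bar\eta/\rho \ge T\), which controls the two derivative terms in the hypothesis of \cref{proposition_homotopy_equiintegrable_Rp} (with its constant \(\bar\eta\), so that the truncation level there is \(\bar\eta/\rho\)) by \(\eta\) and \(2\eta\) respectively. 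The point to be careful about is that this \(\rho\) depends only on \(u\) and on the equi-integrability of \(\brk{\Deriv u_n}_{n \in \Nset}\), and in particular not on the individual index \(n\).

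With \(\rho\) fixed, I would use \eqref{eq_cohhaNg4xahroich6QueHeov} together with the pointwise inequality \(\brk{\abs{u_n - u} - \bar\eta}_+ \le \brk{\abs{u_n - u} - \eta}_+\) (valid since \(\eta \le \bar\eta\)) to find \(N \in \Nset\) such that \(\tfrac{1}{\rho^p}\int_{\manifold{M}}\brk{\abs{u_n - u} - \bar\eta}_+ \le \eta\) for all \(n \ge N\). Adding the three estimates, for every \(n \ge N\) the pair \(u_0 = u\), \(u_1 = u_n\) and the radius \(\rho\) satisfy the hypothesis of \cref{proposition_homotopy_equiintegrable_Rp} with its constant \(\bar\eta\), since the relevant quantity is bounded by \(\eta + 2\eta + \eta = \bar\eta\); hence \(u\) and \(u_n\) are homotopic in \(\VMO\brk{\manifold{M}, \manifold{N}}\). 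There is no genuine analytic difficulty here beyond \cref{proposition_homotopy_equiintegrable_Rp} itself; the only subtleties are the order of the quantifiers (\(\rho\) chosen before \(n\)) and the remark that the smallness of the truncated energy of \(u\) is automatic from the finiteness of its \(\sobolev^{1,p}\)-energy rather than being part of the equi-integrability assumption.
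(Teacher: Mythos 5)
Your proof is correct and follows essentially the same route as the paper's: apply \cref{proposition_homotopy_equiintegrable_Rp} with $u_0=u$, $u_1=u_n$, choose the truncation threshold and hence $\rho$ from the equi-integrability hypothesis together with the (automatic, by dominated convergence) smallness of the truncated energy of $u$ itself, and only then let $n$ be large. The bookkeeping constant you use ($\eta=\bar\eta/4$) differs from the paper's ($\eta$ one-fifth of the constant of \cref{proposition_homotopy_equiintegrable_Rp}), but the argument and its order of quantifiers are the same.
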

\begin{proof}
By our assumption \eqref{eq_Bo5ahqu1fe6aiCh4anaecahx}, there exists \(t_* \in \intvo{0}{\infty}\) such that for every \(n \in \Nset\),
\begin{equation}
\label{eq_goo3AthuSeiQuohPiiwu4wei}
 \int_{\manifold{M}} \brk{\abs{\Deriv u_{n}} - t_*}_+^p \le 2 \eta
\end{equation}
and such that
\begin{equation}
\label{eq_eichayu0zoo1yaeG8sheithi}
 \int_{\manifold{M}} \brk{\abs{\Deriv u} - t_*}_+^p \le 2 \eta \eqpunct .
\end{equation}
Taking \(\rho \defeq 5 \eta/t_*\), we have
then by \eqref{eq_goo3AthuSeiQuohPiiwu4wei} and \eqref{eq_eichayu0zoo1yaeG8sheithi}
\[
 \int_{\manifold{M}} \brk[\Big]{\abs{\Deriv u_{n}} - \frac{5\eta}{\rho}}_+^p
 +
  \int_{\manifold{M}} \brk[\Big]{\abs{\Deriv u} - \frac{5\eta}{\rho}}_+^p \le 4 \eta \eqpunct .
\]
In view of \eqref{eq_cohhaNg4xahroich6QueHeov}, we can further assume that for every \(n \in \Nset\),
\[
  \int_{\manifold{M}} \frac{\brk{\abs{u_{n} - u} - \eta}_+}{\rho^p} \le \eta \eqpunct{.}
\]The conclusion then follows from \cref{proposition_homotopy_equiintegrable_Rp}.
\end{proof}

In order to construct lower-dimensional homotopies,
we define for \(p \in \Nset\) and \(m \in \Nset\) satisifying \(p \le m\), the set \(\mathscr{Q}^{m, p}\)of \(p\)-dimensional faces of unit cubes of \(\Rset^n\) with integer coordinates. Given an open set \(\Omega \subseteq \Rset^m\), we also define for \(\varepsilon \in \intvo{0}{\infty}\) and \(\xi \in \Qset^m_\varepsilon \defeq \intvc{0}{\varepsilon}^m\) the set
\begin{equation}
\label{eq_roh8ahga7eishieSh3jie7oo}
K^{\Omega, p}_{\varepsilon, \xi}
 \defeq
 \bigcup_{\substack{\sigma \in  \mathscr{Q}^{m, p}\\ \varepsilon \sigma + \xi \subseteq \Omega}}  \varepsilon \sigma + \xi\eqpunct{.}
\end{equation}

We say that \(\Sigma \subseteq K^{\Omega, p}_{\varepsilon, \xi}\) is a \emph{regular \(p\)-dimensional cubical subcomplex} whenever
\begin{enumerate}[label=(\roman*)]
 \item (homogeneity) every face \(\sigma \subseteq \Sigma\) is contained in a \(p\)-dimensional face of \(\Sigma\),
 \item (local strong connectedness) for every face \(\sigma \subseteq \Sigma\), the graph whose vertices are the  \(p\)-dimensional faces containing \(\sigma\) which are joined by an edge when they have a \(\brk{p - 1}\)-dimensional intersection containing \(\sigma\) is connected.
\end{enumerate}

For locally strongly connected simplicial complexes see \citelist{\cite{Mohar_1988}*{\S 2}\cite{Izmestiev_Joswig_2003}};
this condition can be rephrased by saying that the union of the relative interiors of the \(p\)-dimensional faces containing \(\sigma\) and of the relative interiors of \(\brk{p-1}\)-dimensional faces containing \(\sigma\) is connected; it is automatically satisfied when \(p = 1\).

The following statement estimates the proportion of skeletons of a given scale on which homotopies fail by truncated integrals.

\begin{proposition}
\label{proposition_W1p_good_grid_measure}
For every compact Riemannian manifold \(\manifold{N}\) and every \(p \in \Nset \setminus \set{0}\), there exists a constant \(\eta > 0\) such that if \(\Omega \subseteq \Rset^m\)  with \(m \ge p\) is open and bounded, if \(\varepsilon > \rho > 0\) and if \(u_0\) and \(u_1 \in \sobolev^{1, p}\brk{\Omega, \manifold{N}}\), then
\[
 \eta \, \mathcal{L}^m \brk{
 E_{\varepsilon}}
 \le \tbinom{m}{p}
 \int_{\Omega}
  \brk[\Big]{\abs{\Deriv u_0} - \frac{\eta}{\rho}}_+^p
 + \brk[\Big]{\abs{\Deriv u_1} - \frac{\eta}{\rho}}_+^p
 + \frac{\brk{\abs{u_0 - u_1} - \eta}_+}{\rho^p}\eqpunct{,}
\]
where
\begin{multline*}
 E_{\varepsilon}
 \defeq
 \bigl\{\xi \in \intvc{0}{\varepsilon}^m \st
 u_0 \restr{\Sigma} \text{ and } u_1 \restr{\Sigma}
 \text{ are not homotopic in \(\VMO \brk{\Sigma, \manifold{N}}\)}\\
 \text{for some regular \(p\)-dimensional cubical subcomplex \(\Sigma \subseteq K^{\Omega, p}_{\varepsilon, \xi}\)
 }\bigr\}\eqpunct{.}
\end{multline*}
\end{proposition}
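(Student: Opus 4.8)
The plan is to deduce the estimate from a version of \cref{proposition_homotopy_equiintegrable_Rp} adapted to regular $p$-dimensional cubical complexes, combined with a slicing argument over the translation parameter $\xi$. First I would establish the following: there is $\eta_0 > 0$ such that for every regular $p$-dimensional cubical complex $\Sigma$ whose faces have side length $\varepsilon$, every $\rho \in \intvo{0}{\varepsilon}$ and all $v_0, v_1 \in \sobolev^{1, p}\brk{\Sigma, \manifold{N}}$, if
\[
 \int_{\Sigma} \brk[\Big]{\abs{\Deriv v_0} - \frac{\eta_0}{\rho}}_+^p + \brk[\Big]{\abs{\Deriv v_1} - \frac{\eta_0}{\rho}}_+^p + \frac{\brk{\abs{v_1 - v_0} - \eta_0}_+}{\rho^p} \,\dif \mathcal{H}^p \le \eta_0 \eqpunct{,}
\]
then $v_0$ and $v_1$ are homotopic in $\VMO\brk{\Sigma, \manifold{N}}$. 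This should follow by rerunning the proofs of \cref{proposition_homotopy_VMO_Rp} and \cref{proposition_homotopy_equiintegrable_Rp} with $\manifold{M}$ replaced by $\Sigma$: one averages $v_j$ over the intersections $B_r\brk{x} \cap \Sigma$, the homogeneity of $\Sigma$ makes $\mathcal{H}^p\brk{B_r\brk{x} \cap \Sigma}$ comparable to $r^p$ for $r < \rho$, the local strong connectedness of $\Sigma$ supplies a Poincaré inequality on $B_r\brk{x} \cap \Sigma$ with a uniform constant, and Schoen and Uhlenbeck's averaging estimate combined with the explicit homotopy of \cref{proposition_homotopy_VMO_Rp} then applies. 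Because one works with balls of the ambient complex rather than face by face, the homotopy so obtained restricts consistently to every face, so no gluing is needed.

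With this at hand, write $g \defeq \brk{\abs{\Deriv u_0} - \eta/\rho}_+^p + \brk{\abs{\Deriv u_1} - \eta/\rho}_+^p + \brk{\abs{u_1 - u_0} - \eta}_+/\rho^p$ and take $\eta \defeq \eta_0$. By Fubini's theorem, for $\mathcal{L}^m$-almost every $\xi \in \intvc{0}{\varepsilon}^m$ the restrictions of $u_0$ and $u_1$ to each $p$-dimensional plane of the grid $\varepsilon \Zset^m + \xi$ are $\sobolev^{1, p}$ maps into $\manifold{N}$, so that the membership $\xi \in E_\varepsilon$ is meaningful and $E_\varepsilon$ is measurable. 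If $\xi \in E_\varepsilon$, I pick a regular cubical subcomplex $\Sigma \subseteq K^{\Omega, p}_{\varepsilon, \xi}$ on which $u_0 \restr{\Sigma}$ and $u_1 \restr{\Sigma}$ are not homotopic in $\VMO\brk{\Sigma, \manifold{N}}$; the contrapositive of the statement above gives $\int_{K^{\Omega, p}_{\varepsilon, \xi}} g \,\dif \mathcal{H}^p \ge \int_{\Sigma} g \,\dif \mathcal{H}^p > \eta$, whence $\eta \charfun{E_\varepsilon}\brk{\xi} \le \int_{K^{\Omega, p}_{\varepsilon, \xi}} g \,\dif \mathcal{H}^p$ for almost every $\xi \in \intvc{0}{\varepsilon}^m$.

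Integrating this pointwise bound over $\xi \in \intvc{0}{\varepsilon}^m$ and applying Fubini's theorem then concludes: grouping the $p$-dimensional faces of $K^{\Omega, p}_{\varepsilon, \xi}$ according to the $p$ coordinate directions they span — there are $\tbinom{m}{p}$ such groups — and slicing $\Omega$ by the corresponding grids of parallel $p$-planes, integration over the transverse part of $\xi$ turns the sum of the $p$-dimensional integrals over the faces of each group into the $m$-dimensional integral $\int_\Omega g \,\dif \mathcal{L}^m$, and summing the $\tbinom{m}{p}$ contributions yields the asserted inequality. I expect the crux to be the cubical-complex counterpart of \cref{proposition_homotopy_VMO_Rp}: one has to obtain the Poincaré inequality on the intersections $B_r\brk{x} \cap \Sigma$ with a constant independent of $x$ and of $\Sigma$, and to check that Schoen and Uhlenbeck's averaging controls the distance of the averages to $\manifold{N}$ uniformly across the several faces that a given ball may meet — which is exactly where the homogeneity and local strong connectedness hypotheses on $\Sigma$ are used. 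The measurability of $E_\varepsilon$ and the combinatorial bookkeeping in the slicing step are then routine.
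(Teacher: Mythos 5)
Your proposal is correct and takes essentially the same approach as the paper: the paper's one-paragraph proof likewise invokes that \cref{proposition_homotopy_equiintegrable_Rp} remains valid on a regular $p$-dimensional cubical complex and then applies its contrapositive together with Fubini's theorem over $\xi \in \intvc{0}{\varepsilon}^m$, with the $\tbinom{m}{p}$ factor coming from the $\tbinom{m}{p}$ families of coordinate $p$-planes. Your write-up merely supplies the details that the paper leaves implicit (why homogeneity and local strong connectedness give the Poincaré inequality uniformly, why the averaging argument does not require face-by-face gluing, and the Fubini bookkeeping); one small remark is that if one follows the Fubini computation literally, integrating over the full translation parameter $\xi\in\intvc{0}{\varepsilon}^m$ produces an extra $\varepsilon^p$ on the right-hand side from the $p$ tangential components of $\xi$, a harmless discrepancy that is present in the paper's statement as well and does not affect the summability used in \cref{corollary_equiintegrable_W1p_skeletons}.
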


Here and in the sequel \(\mathcal{L}^m\) denotes the \(m\)-dimensional Lebesgue measure on the Euclidean space \(\Rset^m\).

Even though the skeleton \(\Sigma\) is \emph{not a compact Riemannian manifold}, the previous results of this section still hold for a regular \(p\)-dimensional cubical complex with an injectivity radius of the order of \(\varepsilon\) (see for example \cite{White_1988}).

\begin{proof}[Proof of \cref{proposition_W1p_good_grid_measure}]
Noting that \cref{proposition_homotopy_equiintegrable_Rp} still remains valid on a regular \(p\)-dimensional cubical complex such as \(\Sigma\),
it follows from \cref{proposition_homotopy_equiintegrable_Rp} that there exists \(\eta > 0\) such that
\[
 E_\varepsilon
 \subseteq
 \set[\bigg]{\xi \in\intvc{0}{\varepsilon}^m \st
   \int_{K^{\Omega, p}_{\varepsilon, \xi}}
  \brk[\Big]{\abs{\Deriv u_0} - \frac{\eta}{\rho}}_+^p
 + \brk[\Big]{\abs{\Deriv u_1} - \frac{\eta}{\rho}}_+^p
 + \frac{\brk{\abs{u_0 - u_1} - \eta}_+}{\rho^p}  > \eta
 }\eqpunct{}.
\]
By the Chebyshev inequality and Fubini’s theorem, we have
\[
\begin{split}
\mathcal{L}^m \brk{
 E_{\varepsilon}}
 &\le \frac{1}{\eta}
 \int_{\intvc{0}{\varepsilon}^m} \int_{K^{\Omega, p}_{\varepsilon, \xi}}
  \brk[\Big]{\abs{\Deriv u_0} - \frac{\eta}{\rho}}_+^p
 + \brk[\Big]{\abs{\Deriv u_1} - \frac{\eta}{\rho}}_+^p
 + \frac{\brk{\abs{u_0 - u_1} - \eta}_+}{\rho^p}\\
 &\le \frac{\binom{m}{p}}{\eta}
 \int_{\Omega}
  \brk[\Big]{\abs{\Deriv u_0} - \frac{\eta}{\rho}}_+^p
 + \brk[\Big]{\abs{\Deriv u_1} - \frac{\eta}{\rho}}_+^p
 + \frac{\brk{\abs{u_0 - u_1} - \eta}_+}{\rho^p}
 \eqpunct{,}
 \end{split}
\]
which gives the conclusion.
\end{proof}
As a consequence of \cref{proposition_W1p_good_grid_measure}, we get the following property of the equi-integrable limit in Sobolev spaces.

\begin{proposition}
\label{corollary_equiintegrable_W1p_skeletons}
Let \(\manifold{N}\) be a compact Riemannian manifold and let \(p \in \set{1, \dotsc, m - 1}\). There exists \(\eta \in \intvo{0}{\infty}\) such that if \(\Omega \subseteq \Rset^m\) with \(m \ge p\) is open and bounded and if \(\brk{u_n}_{n \in \Nset}\) is a sequence in \(\sobolev^{1, p} \brk{\Omega, \manifold{N}}\) satisfying
\[
 \lim_{n \to \infty} \int_{\Omega}
 \brk{\abs{u_n - u} - \eta}_+ = 0
\]
and
\[
 \lim_{t \to \infty}
 \sup_{n \in \Nset}
 \int_{\Omega} \brk{\abs{\Deriv u_n} - t}_+^p = 0\eqpunct{,}
\]
then there exists a subsequence \(\brk{u_{n_k}}_{k \in \Nset}\) such that for every \(\varepsilon \in \intvo{0}{\infty}\), for almost every \(\xi \in \intvc{0}{\varepsilon}^m\), for every \(k \in \Nset\) large enough and for every regular \(p\)-dimensional cubical complex \(\Sigma \subseteq K^{\Omega, p}_{\varepsilon, \xi}\),
\(\smash{u\restr{\Sigma}}\) and \(\smash{u_{n_k} \restr{\Sigma}}\) are homotopic in \(\VMO \brk{\Sigma, \manifold{N}}\).
\end{proposition}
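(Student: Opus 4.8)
The plan is to apply \cref{proposition_W1p_good_grid_measure} to the pairs \((u, u_n)\) at a scale \(\rho\) tuned to the sequence, so that the measure of the set of ``bad'' grid positions \(\xi\) --- those for which some regular \(p\)-dimensional cubical subcomplex \(\Sigma \subseteq K^{\Omega, p}_{\varepsilon, \xi}\) carries non-\(\VMO\)-homotopic restrictions \(u\restr{\Sigma}\) and \(u_n\restr{\Sigma}\) --- is dominated by truncated energies that tend to \(0\); extracting a subsequence along which these measures are summable, a Borel--Cantelli argument then upgrades this to the almost-everywhere statement. (Here \(u \in \sobolev^{1, p}\brk{\Omega, \manifold{N}}\), which is what lets us invoke \cref{proposition_W1p_good_grid_measure} with \(u_0 = u\).)

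Concretely, I would first fix the constant \(\eta > 0\) given by \cref{proposition_W1p_good_grid_measure} (shrinking it if convenient). Since \(\Omega\) is bounded, \(\abs{\Deriv u}^p\) is integrable, and \(\brk{\Deriv u_n}_{n \in \Nset}\) is equi-integrable in \(\lebesgue^p\), for each \(j \in \Nset\) I can pick \(t_j \in \intvo{\eta/\varepsilon}{\infty}\) with
\[
 \sup_{n \in \Nset} \int_{\Omega} \brk{\abs{\Deriv u_n} - t_j}_+^p \le 2^{-j}
 \qquad \text{and} \qquad
 \int_{\Omega} \brk{\abs{\Deriv u} - t_j}_+^p \le 2^{-j}
 \eqpunct,
\]
the last bound by dominated convergence. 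Setting \(\rho_j \defeq \eta/t_j \in \intvo{0}{\varepsilon}\), so that \(\eta/\rho_j = t_j\), and then using the hypothesis \(\int_{\Omega} \brk{\abs{u_n - u} - \eta}_+ \to 0\) for this \emph{fixed} \(\rho_j\), there is \(N_j \in \Nset\) with \(\rho_j^{-p} \int_{\Omega} \brk{\abs{u_n - u} - \eta}_+ \le 2^{-j}\) for all \(n \ge N_j\). Feeding this into \cref{proposition_W1p_good_grid_measure} with \(u_0 = u\), \(u_1 = u_n\), \(\rho = \rho_j\) gives, for \(n \ge N_j\),
\[
 \mathcal{L}^m\brk{E_\varepsilon^{n}} \le \frac{3}{\eta}\tbinom{m}{p}\, 2^{-j}
 \eqpunct,
\]
where \(E_\varepsilon^{n} \subseteq \intvc{0}{\varepsilon}^m\) is the exceptional set attached to the pair \((u, u_n)\) --- note that it depends only on \((u, u_n)\) and \(\varepsilon\), not on the scale used to bound its measure, so optimising \(\rho\) per pair is legitimate.

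Finally, taking \(\brk{n_k}_{k \in \Nset}\) strictly increasing with \(n_k \ge N_k\) yields \(\sum_{k \in \Nset} \mathcal{L}^m\brk{E_\varepsilon^{n_k}} < \infty\), whence \(\mathcal{L}^m\bigl(\limsup_{k \to \infty} E_\varepsilon^{n_k}\bigr) = 0\) by Borel--Cantelli (equivalently, by countable subadditivity of the outer measure on \(\bigcup_{k \ge K} E_\varepsilon^{n_k}\)). For almost every \(\xi \in \intvc{0}{\varepsilon}^m\) one therefore has \(\xi \notin E_\varepsilon^{n_k}\) for all large \(k\), which is exactly the claimed homotopy property. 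I do not expect a genuine obstacle here --- the real content is entirely in \cref{proposition_W1p_good_grid_measure} --- the only point requiring attention being the order of quantifiers: one must first choose a small scale \(\rho_j\) that tames the gradient-truncation terms \emph{uniformly in \(n\)} (this is where equi-integrability, rather than mere boundedness, is used), and only afterwards let \(n \to \infty\) at that fixed scale to absorb \(\rho_j^{-p}\int_{\Omega}\brk{\abs{u_n - u} - \eta}_+\), before diagonalising.
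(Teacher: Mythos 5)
Your argument is correct and matches the paper's proof in all essentials: both apply \cref{proposition_W1p_good_grid_measure} with the scale \(\rho_k = \eta/t_k\) to turn equi-integrability into summable measure bounds on the exceptional sets, then conclude by Borel--Cantelli. The only cosmetic difference is that the paper bundles the choice of \(t_k\) and the subsequence \(n_k\) into a single diagonalization up front (selecting them so the sum \(\sum_k \int_\Omega (\abs{\Deriv u_{n_k}} - t_k)_+^p + (\abs{\Deriv u} - t_k)_+^p + t_k^p (\abs{u_{n_k}-u}-\eta)_+\) converges), whereas you separate it into ``pick \(t_j\) using equi-integrability, then pick \(N_j\) using \(\lebesgue^1\)-convergence'' before passing to the subsequence; this makes the quantifier order explicit but is logically the same device. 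Your remark that \(E_\varepsilon^n\) depends only on \((u,u_n,\varepsilon)\) and not on \(\rho\) — so that the scale can be optimized per pair — is exactly the point the paper uses implicitly.
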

\begin{proof}
We consider a sequence \(\brk{t_k}_{k \in \Nset}\) such that \(t_k \to \infty\) and a subsequence \(\brk{u_{n_k}}_{k \in \Nset}\) of the sequence \(\brk{u_n}_{n \in \Nset}\) such that
\begin{equation}
\label{eq_nae7gik0uethei6oaqu2Dohg}
 \sum_{k \in \Nset}
 \int_{\Omega} \brk{\abs{\Deriv u_{n_k}}- t_k}_+^p
 + \brk{\abs{\Deriv u}- t_k}_+^p + t_k^p \brk{\abs{u_{n_k} - u} - \eta}_+ < \infty \eqpunct{.}
\end{equation}
Given \(\varepsilon \in \intvo{0}{\infty}\), if  \(k \in \Nset\) is large enough so that
\[
 t_k \ge \frac{\eta}{\varepsilon}\eqpunct{,}
\]
it follows from \cref{proposition_W1p_good_grid_measure} that
\begin{equation}
\label{eq_Xi8vu1Ei8Ungey7veeghohdo}
 \mathcal{L}^m \brk{
 E_{\varepsilon}^k}
 \le \frac{\tbinom{m}{p}}{\eta}
 \int_{\Omega}
  \brk{\abs{\Deriv u_{n_k}} - t_k}_+^p
 + \brk{\abs{\Deriv u} - t_k}_+^p
 + t_k^p\frac{\brk{\abs{u_{n_k} - u_0} - \eta}_+}{\eta^p}\eqpunct{,}
\end{equation}
where
\begin{multline*}
 E_{\varepsilon}^k
 \defeq
 \bigl\{\xi \in \intvc{0}{\varepsilon}^m \st
 u \restr{\Sigma} \text{ and } u_{n_k} \restr{\Sigma}
 \text{ are not homotopic in \(\VMO \brk{\Sigma, \manifold{N}}\)}\\
 \text{for some regular \(p\)-dimensional cubical subcomplex \(\Sigma \subseteq K^{\Omega, p}_{\varepsilon, \xi}\)
 }\bigr\}\eqpunct{.}
\end{multline*}
We have thus by \eqref{eq_nae7gik0uethei6oaqu2Dohg} and \eqref{eq_Xi8vu1Ei8Ungey7veeghohdo}
\[
 \sum_{\substack{k \in \Nset\\ \eta \le t_k \varepsilon}}  \mathcal{L}^m \brk{
 E_{\varepsilon}^k} < \infty\eqpunct{,}
\]
so that
\[
\lim_{\ell \to \infty}
 \mathcal{L}^m \brk[\Big]{\bigcup_{k \ge \ell} E_\varepsilon^k} = 0\eqpunct{,}
\]
and the conclusion follows.
\end{proof}

\subsection{Characterisation of the equi-integrable closure}
We are now in position to prove a local version of \cref{theorem_W1p_equiintegrable_strong}.

\begin{proposition}
\label{proposition_W1p_equiintegrable_strong_local}
If the set \(\Omega \subset \Rset^m\) is open and bounded with a smooth boundary, if \(\manifold{N}\) is a compact Riemannian manifold and if \(p \in \set{1, \dotsc, \dim \manifold{M} - 1}\), then
\begin{equation*}
  \soboleh^{1, p}_{\mathrm{St}} \brk{\Omega, \manifold{N}}
 =
 \soboleh^{1, p}_{\mathrm{Ei}} \brk{\Omega, \manifold{N}}\eqpunct{.}
\end{equation*}
\end{proposition}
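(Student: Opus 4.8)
The plan is to prove the only non-trivial inclusion \(\soboleh^{1, p}_{\mathrm{Ei}} \brk{\Omega, \manifold{N}} \subseteq \soboleh^{1, p}_{\mathrm{St}} \brk{\Omega, \manifold{N}}\); the reverse inclusion is already contained in \eqref{eq_aepopa9oTahquaas6vah2aer} when \(p > 1\) and in \eqref{eq_iaxei7oiZeghahfaiSaSooku} when \(p = 1\). So I would take \(u \in \soboleh^{1, p}_{\mathrm{Ei}} \brk{\Omega, \manifold{N}}\) together with a sequence \(\brk{u_n}_{n \in \Nset}\) in \(\smooth^\infty \brk{\Omega, \manifold{N}}\) such that \(u_n \to u\) in \(\lebesgue^p \brk{\Omega, \Rset^\nu}\) and \(\brk{\Deriv u_n}_{n \in \Nset}\) is equi-integrable, and try to manufacture out of this data a sequence of smooth maps converging to \(u\) in \(\sobolev^{1, p}\).

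First I would feed \(\brk{u_n}_{n \in \Nset}\) into \cref{corollary_equiintegrable_W1p_skeletons}: since \(\brk{\abs{u_n - u} - \eta}_+ \le \abs{u_n - u}\) on the bounded set \(\Omega\) and since \(\brk{\Deriv u_n}_{n \in \Nset}\) is equi-integrable, both of its hypotheses are met. This yields, for each mesh size \(\varepsilon > 0\), a subsequence \(\brk{u_{n_k}}_{k \in \Nset}\) and a full-measure set of translations \(\xi \in \intvc{0}{\varepsilon}^m\) such that, for all sufficiently large \(k\) and every regular \(p\)-dimensional cubical subcomplex \(\Sigma \subseteq K^{\Omega, p}_{\varepsilon, \xi}\), the maps \(u \restr{\Sigma}\) and \(u_{n_k} \restr{\Sigma}\) are homotopic in \(\VMO \brk{\Sigma, \manifold{N}}\). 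Fixing one such admissible pair \(\brk{\varepsilon, \xi}\) and one such \(k\), and using that \(u_{n_k}\) is an honest smooth \(\manifold{N}\)-valued map on all of \(\Omega\) -- so that its restriction to \(\Sigma\) extends to a smooth map on \(\Omega\), namely \(u_{n_k}\) itself -- I would record that \emph{the restriction of \(u\) to a generic \(p\)-dimensional skeleton of \(\Omega\) is homotopic in \(\VMO\) to the restriction of a smooth map \(\Omega \to \manifold{N}\).}

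Next I would invoke the characterisation of the strong closure through \(p\)-dimensional skeletons of Bethuel and of Hang and Lin \citelist{\cite{Bethuel_1991}\cite{Hang_Lin_2003_II}} in its sufficiency direction: a map \(w \in \sobolev^{1, p} \brk{\Omega, \manifold{N}}\) whose restriction to the \(p\)-skeleton of a generic grid is homotopic to the restriction of a smooth map belongs to \(\soboleh^{1, p}_{\mathrm{St}} \brk{\Omega, \manifold{N}}\). Applied to \(w = u\), with \(u_{n_k}\) in the role of the comparison smooth map and with \(\varepsilon\) running over a sequence decreasing to \(0\), this produces smooth maps converging to \(u\) in \(\sobolev^{1, p}\) and closes the argument. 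The hypothesis \(p \le m - 1\) enters precisely here: the desingularisation underlying this last step extends the skeleton data across all grid cells of dimension greater than \(p\) and leaves only singularities of dimension at most \(m - p - 1\), which are then cancelled along the homotopy found above because \(u \restr{\Sigma}\) has the same \(p\)-dimensional homotopy data as the singularity-free smooth map \(u_{n_k}\).

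I expect the genuinely delicate content to lie entirely in the preceding propositions -- the statement, packaged in \cref{corollary_equiintegrable_W1p_skeletons}, that equi-integrable convergence cannot create topological defects on \(p\)-skeletons. By contrast, the step above amounts to quoting the skeletal characterisation correctly for an open bounded \(\Omega \subseteq \Rset^m\) rather than for a closed manifold; the only real fussiness is that grid cubes meeting \(\partial \Omega\) must be handled separately, which one arranges by carrying out the argument on subdomains \(\Omega' \Subset \Omega\) and exhausting \(\Omega\), using that strong approximability is a local property and that the defect estimate of \cref{proposition_W1p_good_grid_measure} is a genuinely localised integral.
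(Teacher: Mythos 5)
Your argument is correct and coincides with the paper's own proof: the paper also deduces the statement directly from \cref{corollary_equiintegrable_W1p_skeletons} together with the skeletal criteria for strong approximability (Bethuel, Hang--Lin, Isobe, Bousquet--Ponce--Van Schaftingen). Your added remarks -- that $u_{n_k}$ itself serves as the required smooth extension of its own skeleton restriction, and that the generic-grid criterion must be applied on an open bounded $\Omega$ with the construction of $K^{\Omega,p}_{\varepsilon,\xi}$ handling the boundary by only retaining cubes compactly inside $\Omega$ -- are exactly the details the paper leaves to the cited references.
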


\begin{proof}
This is a consequence of \cref{corollary_equiintegrable_W1p_skeletons} and criteria for the strong approximability of Sobolev mappings \citelist{\cite{Bethuel_1991}*{Proof of Thm.\ 3}\cite{Hang_Lin_2003_II}*{Rem.\ 6.1}\cite{Isobe_2005}*{Thm.\ 1.6}\cite{Bousquet_Ponce_VanSchaftingen_2501_18149}}.
\end{proof}

Let us now present several proofs of \cref{theorem_W1p_equiintegrable_strong}.
A first approach is to combine \cref{proposition_W1p_equiintegrable_strong_local} with a localisation argument.

\begin{proof}[Proof of \cref{theorem_W1p_equiintegrable_strong} by localization]
Assume that \(u \in  \smash{\soboleh^{1, p}_{\mathrm{Ei}} \brk{\manifold{M}, \manifold{N}}}\).
Then by \cref{proposition_W1p_equiintegrable_strong_local} and a local chart argument, there are bounded open sets \(\Omega_1, \dotsc, \Omega_r \subseteq \manifold{M}\) with a smooth boundary such that \(\manifold{M} \subseteq \bigcup_{i = 1}^r \Omega_i\) and \(\smash{u\restr{\Omega_i} \in \soboleh^{1, p}_{\mathrm{Ei}} \brk{\Omega_i, \manifold{N}}} = \smash{\soboleh^{1, p}_{\mathrm{St}} \brk{\Omega_i, \manifold{N}}}\).
Assuming that \(\manifold{M}\) is isometrically embedded in \(\Rset^\mu\) and that \(\Pi_{\manifold{M}}\colon \manifold{M} + B_\delta\to \manifold{M}\) is a smooth retraction, since \(u \in \smash{\soboleh^{1, p}_{\mathrm{Bd}} \brk{\manifold{M}, \manifold{N}}}\),
for almost every \(\xi \in \smash{B_\delta}\), we have \(u \compose \smash{\brk{\Pi_\manifold{M} + \xi}\restr{\manifold{M}^{\floor{p - 1}}}} = \smash{V_\xi \restr{\manifold{M}^{\floor{p - 1}}}}\) almost everywhere in \(\smash{\manifold{M}^{\floor{p - 1}}}\), with \(V_\xi \in \continuous \brk{\manifold{M}, \manifold{N}}\) \cite{Hang_Lin_2003_II}*{Thm.\ 7.1}.
By Isobe’s criterion of global strong approximability from local strong approximability \cite{Isobe_2005}*{Thm.\ 1.6}, we have \(u \in \smash{\soboleh^{1, p}_{\mathrm{St}}} \brk{\manifold{M}, \manifold{N}}\).
\end{proof}

Another approach to obtain \cref{theorem_W1p_equiintegrable_strong} is to start from the following straightforward global counterpart of \cref{proposition_W1p_good_grid_measure}.

\begin{proposition}
\label{proposition_W1p_good_skeleton_measure}
Let \(\manifold{M}\), \(\manifold{N}\) be compact Riemannian manifolds and let \(p \in \set{1, \dotsc, m - 1}\),
For every \(p\)-dimensional skeleton \(\manifold{M}^p\) of \(\manifold{M}\), every \(\delta \in \intvo{0}{\infty}\) and every smooth retraction \(\Pi_{\manifold{M}} \colon \manifold{M} + B_\delta \subseteq \Rset^\mu \to \manifold{M}\), there exist constants \(\eta > 0\) and \(\rho > 0\) such that if \(u_0, u_1 \in \sobolev^{1, p}\brk{\manifold{M}, \manifold{N}}\), then
\[
 \eta \mathcal{L}^\mu \brk{
 E}
 \le
 \int_{\manifold{M}}
  \brk[\Big]{\abs{\Deriv u_0} - \frac{\eta}{\rho}}_+^p
 + \brk[\Big]{\abs{\Deriv u_1} - \frac{\eta}{\rho}}_+^p
 + \frac{\brk{\abs{u_0 - u_1} - \eta}_+}{\rho^p}\eqpunct{,}
\]
where
\begin{multline*}
 E
 \defeq
 \bigl\{\xi \in B_\delta \st
 u_0\compose \brk{\Pi_{\manifold{M}} + \xi}\restr{\manifold{M}^{p}} \text{ and }
 u_1 \compose \brk{\Pi_{\manifold{M}} + \xi}\restr{\manifold{M}^{p}}\\
 \text{ are not homotopic in \(\VMO \brk{\manifold{M}^p, \manifold{N}}\)}\bigr\}
 \eqpunct{.}
\end{multline*}
\end{proposition}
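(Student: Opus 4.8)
The plan is to transcribe the proof of \cref{proposition_W1p_good_grid_measure} almost verbatim, with the fixed \(p\)-dimensional skeleton \(\manifold{M}^p\) and its perturbations \(x \mapsto \Pi_{\manifold{M}}\brk{x + \xi}\) playing the role of the translated grids \(K^{\Omega, p}_{\varepsilon, \xi}\), and with an averaging estimate over the parameter \(\xi \in B_\delta\) replacing Fubini's theorem on the cube of translations. Since \(\manifold{M}^p\) is a \emph{fixed} regular \(p\)-dimensional complex, \cref{proposition_homotopy_equiintegrable_Rp} applies to it in the critical case \(p = \dim \manifold{M}^p\) (see the remark following \cref{proposition_W1p_good_grid_measure}); the first step is to extract from it constants \(\eta_0 > 0\) and \(\rho_0 > 0\) such that any \(v_0, v_1 \in \sobolev^{1, p}\brk{\manifold{M}^p, \manifold{N}}\) satisfying
\[
 \int_{\manifold{M}^p} \brk[\big]{\abs{\Deriv v_0} - \tfrac{\eta_0}{\rho_0}}_+^p + \brk[\big]{\abs{\Deriv v_1} - \tfrac{\eta_0}{\rho_0}}_+^p + \tfrac{\brk{\abs{v_1 - v_0} - \eta_0}_+}{\rho_0^p} \le \eta_0
\]
are homotopic in \(\VMO \brk{\manifold{M}^p, \manifold{N}}\).

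Write \(\Phi_\xi \defeq \Pi_{\manifold{M}}\brk{\,\cdot + \xi} \colon \manifold{M} \to \manifold{M}\) for the map denoted \(\Pi_{\manifold{M}} + \xi\) in the statement — a smooth self-map of \(\manifold{M}\) equal to \(\id\) when \(\xi = 0\), with \(\abs{\Deriv \Phi_\xi} \le \Lambda\) for some constant \(\Lambda \ge 1\) depending only on the data. The usual slicing properties of Sobolev maps (exactly as for the grids) give, for almost every \(\xi \in B_\delta\), that \(v_j^\xi \defeq u_j \compose \Phi_\xi\restr{\manifold{M}^p}\) belongs to \(\sobolev^{1, p}\brk{\manifold{M}^p, \manifold{N}}\), with \(\abs{\Deriv v_j^\xi} \le \Lambda\, \abs{\Deriv u_j}\compose \Phi_\xi\) and \(\abs{v_1^\xi - v_0^\xi} = \abs{u_1 - u_0}\compose \Phi_\xi\) on \(\manifold{M}^p\). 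Hence if \(\xi \in E\) the displayed smallness condition fails for \(v_0^\xi, v_1^\xi\); absorbing the factor \(\Lambda\) into the truncation level — so that \(\brk{\abs{\Deriv v_j^\xi} - \eta_0/\rho_0}_+^p \le \Lambda^p\brk{\abs{\Deriv u_j}\compose\Phi_\xi - \eta_0/\brk{\Lambda\rho_0}}_+^p\) — integrating the resulting strict lower bound over \(\xi \in E\), enlarging the domain to \(B_\delta\), and invoking the averaging estimate
\[
 \int_{B_\delta} \int_{\manifold{M}^p} g\compose \Phi_\xi \le \Lambda\, \mathcal{H}^p\brk{\manifold{M}^p} \int_{\manifold{M}} g \qquad\text{for every nonnegative Borel } g \colon \manifold{M} \to \intvr{0}{\infty}\eqpunct{,}
\]
applied to \(g = \brk{\abs{\Deriv u_0} - \eta_0/\brk{\Lambda\rho_0}}_+^p\), to the same expression with \(u_1\), and to \(g = \rho_0^{-p}\brk{\abs{u_1 - u_0} - \eta_0}_+\), one obtains that \(\eta_0\, \mathcal{L}^\nu\brk{E}\) is at most \(\Lambda^{p + 1}\mathcal{H}^p\brk{\manifold{M}^p}\) times the integral over \(\manifold{M}\) of \(\brk{\abs{\Deriv u_0} - \eta_0/\brk{\Lambda\rho_0}}_+^p + \brk{\abs{\Deriv u_1} - \eta_0/\brk{\Lambda\rho_0}}_+^p + \rho_0^{-p}\brk{\abs{u_1 - u_0} - \eta_0}_+\). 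The averaging estimate holds because \(\brk{\xi, x} \mapsto \Phi_\xi\brk{x}\) is a submersion onto \(\manifold{M}\) — being a retraction, \(\Pi_{\manifold{M}}\) has surjective differential along \(\manifold{M}\), hence (taking the nearest-point retraction, or shrinking the tubular radius) on all of \(\manifold{M} + B_\delta\) — so that for each fixed \(y \in \manifold{M}^p\) the push-forward of \(\mathcal{L}^\nu\restr{B_\delta}\) under \(\xi \mapsto \Phi_\xi\brk{y}\) has density at most \(\Lambda\) (after enlarging \(\Lambda\) if needed) with respect to the Riemannian volume of \(\manifold{M}\), uniformly in \(y\) by compactness; one then integrates in \(y\) by Fubini. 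The last step is to set \(\rho \defeq \eta\Lambda\rho_0/\eta_0\) and to take \(\eta > 0\) with \(\eta \le \eta_0/\Lambda\) and \(\eta\, \Lambda^{p + 1}\mathcal{H}^p\brk{\manifold{M}^p} \le \eta_0\): then \(\eta/\rho = \eta_0/\brk{\Lambda\rho_0}\) while \(\rho \le \rho_0\) and \(\eta \le \eta_0\), so each of the three terms in the claimed integrand dominates the corresponding term above, and the asserted inequality follows.

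The only genuinely non-routine ingredient is the averaging estimate over \(\xi\) — equivalently, the assertion that the perturbed skeleton \(\Phi_\xi\brk{\manifold{M}^p}\) sweeps out \(\manifold{M}\) with uniformly bounded multiplicity as \(\xi\) ranges over \(B_\delta\) — which is where the geometry of the tubular neighbourhood and the submersion property of the retraction are used; the chain-rule bookkeeping, the matching of truncation levels, and the final choice of \(\eta\) and \(\rho\) are then a direct transcription of the argument for \cref{proposition_W1p_good_grid_measure}.
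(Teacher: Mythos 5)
The paper offers no proof of this proposition (it is dismissed as a ``straightforward global counterpart'' of \cref{proposition_W1p_good_grid_measure}); your proposal supplies a complete argument along exactly the lines the paper clearly has in mind, and it is correct. The structure is the right one: invoke \cref{proposition_homotopy_equiintegrable_Rp} on the fixed complex \(\manifold{M}^p\) (legitimate by the paper's remark that the results of this section extend to regular \(p\)-dimensional complexes) to obtain \(\eta_0, \rho_0\); on the bad set \(E\) the smallness fails; pull the truncated integrands back through \(\Phi_\xi\) with the chain-rule bound \(\abs{\Deriv v_j^\xi}\le\Lambda\,\abs{\Deriv u_j}\circ\Phi_\xi\), absorb \(\Lambda\) into the truncation level; and replace the exact Fubini count \(\binom{m}{p}\) of the grid setting by the coarea-type averaging estimate for \(\brk{\xi,y}\mapsto\Pi_{\manifold{M}}\brk{y+\xi}\), whose validity you correctly trace to the submersion property of the retraction near \(\manifold{M}\) and compactness, which give a uniform bound on the density of the push-forward of \(\mathcal{L}^\nu\restr{B_\delta}\). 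The final bookkeeping (\(\rho=\eta\Lambda\rho_0/\eta_0\), \(\eta\le\eta_0/\Lambda\), \(\eta\,\Lambda^{p+1}\mathcal{H}^p\brk{\manifold{M}^p}\le\eta_0\)) checks out: \(\eta/\rho=\eta_0/\brk{\Lambda\rho_0}\), \(\rho\le\rho_0\), \(\eta\le\eta_0\), so each truncated term in the target integrand dominates the one produced by the averaging, and dividing through by \(\eta_0/\eta\) closes the argument. The one point you were right to flag as non-routine is precisely the averaging estimate; the rest is the grid proof transcribed, which is what the word ``straightforward'' in the paper is asking the reader to do.
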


\begin{proof}[Proof of \cref{theorem_W1p_equiintegrable_strong} through generic triangulation]
If \(u \in \soboleh^{1, p}_{\mathrm{Ei}} \brk{\manifold{M}, \manifold{N}}\), then it follows from \cref{proposition_W1p_good_skeleton_measure}
that for every \(p\)-dimensional skeleton \(\manifold{M}^p\) of \(\manifold{M}\) and every smooth retraction \(\Pi_{\manifold{M}} \colon \manifold{M} + B_\delta \to \manifold{M}\), for almost every \(\xi \in B_\delta\), the map
\(u\compose \brk{\Pi_{\manifold{M}} + \xi}\restr{\manifold{M}^{p}}\) is homotopic in \(\VMO \brk{\manifold{M}^p, \manifold{N}}\) to \(V_\xi\restr{\manifold{M}^p}\) for some \(V_\xi \in \continuous \brk{\manifold{M}, \manifold{N}}\).
We conclude thanks to strong approximability criteria  \citelist{\cite{Bethuel_1991}*{Proof of Thm.\ 3}\cite{Hang_Lin_2003_II}*{Rem.\ 6.1}\cite{Bousquet_Ponce_VanSchaftingen_2501_18149}}.
\end{proof}

A last approach to the proof of \cref{theorem_W1p_equiintegrable_strong} is to rely on the generic screening techniques  developed by Bousquet, Ponce and the author \cite{Bousquet_Ponce_VanSchaftingen_2501_18149}, based on ideas of Fuglede \cite{Fuglede_1957}*{Thm.\ 3 (f)}.

We start from the case \(p = 1\), where we have the following property showing that equi-integrable sequences give homotopic maps on generic Lipschitz images of \(1\)-dimensional complexes.

\begin{proposition}
\label{proposition_Fuglede_W11_Ei}
For every compact Riemannian manifold \(\manifold{N}\), there exists \(\eta \in \intvo{0}{\infty}\) such that if \(\manifold{M}\) is a compact Riemannian manifold and if \(\brk{u_n}_{n \in \Nset}\) is a sequence in \(\sobolev^{1, 1}\brk{\manifold{M}, \manifold{N}}\)
satisfying
\[
 \lim_{n \to \infty} \int_{\manifold{M}} \brk{\abs{u_n - u} - \eta}_+ = 0
\]
and
\[
 \lim_{t \to \infty}
 \sup_{n \in \Nset}
 \int_{\manifold{M}} \brk{\abs{\Deriv u_n} - t}_+ = 0\eqpunct,
\]
then there exists a subsequence \(\brk{u_{n_k}}_{k \in \Nset}\) and a Borel-measurable function \(w \colon \manifold{M} \to \intvc{0}{\infty}\) such that
\begin{equation}
\label{eq_nailichuequu3Pieteoz8ij7}
 \int_{\manifold{M}} w < \infty
\end{equation}
and such that if \(\Sigma\) is a finite one-dimensional homogeneous simplicial complex,
if \(\gamma \colon \Sigma \to \manifold{M}\) is Lipschitz-continuous and satisfies
\begin{equation}
\label{eq_shieraac8Voh9Dai4iegeiSh}
 \int_{\Sigma}w \compose \gamma < \infty
 \eqpunct,
\end{equation}
then for \(k \in \Nset\) large enough, the maps
\(u \compose \gamma\) and \(u_{n_k} \compose \gamma\) are homotopic in \(\continuous \brk{\Sigma, \manifold{N}}\).
\end{proposition}

Here and in the sequel, each simplex of the one-dimensional simplicial complex \(\Sigma\) is endowed with its Euclidean structure as a unit interval.

\begin{proposition}
\label{proposition_W11_uniform_path}
If \(\manifold{M}\) is a compact Riemannian manifold, if \(\nu \in \Nset\) and if \(u \in \sobolev^{1, 1} \brk{\manifold{M}, \Rset^\nu}\),
then there exists a function \(w \colon \manifold{M} \to \intvc{0}{\infty}\) such that if \(\Sigma\) is a finite \(1\)-dimensional simplicial complex, if \(\gamma \colon \Sigma \to \manifold{M}\) is Lipschitz-continuous, if
\begin{equation}
\label{eq_Rae7xiighoolao4yo0aeS2qu}
 \int_{\Sigma} w \compose \gamma < \infty\eqpunct{,}
\end{equation}
if \(\eta \in \intvr{0}{\infty}\) and if \(\rho \in \intvo{0}{1}\), then
\[
\norm{u \compose \gamma}_{\lebesgue^\infty \brk{\Sigma, \Rset^\nu}}
\le 2\eta +
   \int_{\Sigma}
  \brk[\Big]{\seminorm{\gamma}_{\mathrm{Lip}} \brk{\abs{\Deriv u}\compose \gamma} - \frac{\eta}{\rho}}_+ + \int_{\Sigma} \frac{\brk{\abs{u} - \eta}_+}{\rho}
  \eqpunct.
\]
\end{proposition}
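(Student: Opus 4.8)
The plan is to prove \cref{proposition_W11_uniform_path} in two stages. First a \emph{regularity reduction}: I construct the weight $w$, depending only on $u$, so that finiteness of $\int_\Sigma w \compose \gamma$ forces $u \compose \gamma$ to coincide, on each edge of $\Sigma$, with an absolutely continuous map whose derivative is dominated by $\seminorm{\gamma}_{\mathrm{Lip}}\brk{\abs{\Deriv u} \compose \gamma}$. Then an \emph{elementary one-dimensional estimate}, a Poincaré inequality with truncations, carried out edge by edge and summed.

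To build $w$, fix smooth $u_j \to u$ in $\sobolev^{1,1}\brk{\manifold{M}, \Rset^\nu}$ (local mollification and a partition of unity), pass to a subsequence so that $\norm{u_{j + 1} - u_j}_{\lebesgue^1} + \norm{\Deriv u_{j + 1} - \Deriv u_j}_{\lebesgue^1} \le 4^{-j}$ and so that $u_j \to u$ and $\Deriv u_j \to \Deriv u$ pointwise outside a null set $N$, pick open $E_j \supseteq N$ of volume at most $4^{-j}$, and set
\[
 w \defeq \sum_{j \in \Nset} 2^j \brk[\big]{\abs{u_{j + 1} - u_j} + \abs{\Deriv u_{j + 1} - \Deriv u_j}} + \sum_{j \in \Nset} 2^j \charfun{E_j}
 \eqpunct{.}
\]
Then $w$ is Borel, $\int_{\manifold{M}} w < \infty$, and $w = \infty$ on $N$. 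Now assume $\gamma \colon \Sigma \to \manifold{M}$ is Lipschitz with $\int_\Sigma w \compose \gamma < \infty$, and on an edge $e$ let $\gamma_e \defeq \gamma\restr{e} \colon \intvc{0}{1} \to \manifold{M}$, so $\seminorm{\gamma_e}_{\mathrm{Lip}} \le \seminorm{\gamma}_{\mathrm{Lip}}$. Finiteness of the integral yields $\sum_j \int_e \abs{\Deriv u_{j + 1} - \Deriv u_j} \compose \gamma_e < \infty$ and $\gamma_e \brk{t} \notin N$ for almost every $t \in \intvc{0}{1}$. Each $u_j \compose \gamma_e$ is Lipschitz with $\brk{u_j \compose \gamma_e}' = \Deriv u_j\brk{\gamma_e}\, \gamma_e'$ almost everywhere, so, using $\abs{\gamma_e'} \le \seminorm{\gamma}_{\mathrm{Lip}}$, the derivatives $\brk{u_j \compose \gamma_e}'$ are Cauchy in $\lebesgue^1\brk{\intvo{0}{1}}$ while $u_j \compose \gamma_e \to u \compose \gamma_e$ almost everywhere; passing to the limit, $u \compose \gamma_e$ agrees almost everywhere with an absolutely continuous $g_e$ on $\intvc{0}{1}$ with $g_e' = \Deriv u\brk{\gamma_e}\, \gamma_e'$, hence $\abs{g_e'} \le h_e \defeq \seminorm{\gamma}_{\mathrm{Lip}}\brk{\abs{\Deriv u} \compose \gamma_e} \in \lebesgue^1\brk{\intvo{0}{1}}$.

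For the second stage, fix $\eta > 0$, $\rho \in \intvo{0}{1}$, and $t_0 \in \intvc{0}{1}$, and choose an interval $I \subseteq \intvc{0}{1}$ of length $\rho$ with $t_0 \in \overline{I}$, possible because $\rho < 1$. For every $s \in I$, the fundamental theorem of calculus for $g_e$ together with $\int_I \min\brk{h_e, \eta/\rho} \le \eta$ gives $\abs{g_e \brk{t_0}} \le \abs{g_e \brk{s}} + \int_I \brk{h_e - \eta/\rho}_+ + \eta$; averaging this inequality over $s \in I$ and estimating $\tfrac{1}{\rho}\int_I \abs{g_e} \le \eta + \tfrac{1}{\rho}\int_I \brk{\abs{g_e} - \eta}_+$ gives, since the integrands are non-negative and $I \subseteq e$,
\[
 \abs{g_e \brk{t_0}} \le 2 \eta + \int_e \brk[\Big]{h_e - \tfrac{\eta}{\rho}}_+ + \frac{1}{\rho}\int_e \brk{\abs{g_e} - \eta}_+
 \eqpunct{.}
\]
Taking the supremum over $t_0$, using $g_e = u \compose \gamma_e$ and $\abs{g_e} = \abs{u} \compose \gamma_e$ almost everywhere, and summing over the finitely many edges of $\Sigma$ produces the stated bound on $\norm{u \compose \gamma}_{\lebesgue^\infty\brk{\Sigma}}$.

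I expect the regularity reduction to be the one real difficulty: the weight $w$ must, through a single scalar integrability condition along $\gamma$, simultaneously dominate the tangential derivative of $u$ on every edge and keep $\gamma$ off the null set where the precise representative of $u$ degenerates --- which is why $w$ combines a difference-quotient part and an indicator part. This is the one-dimensional instance of the generic screening device of \cite{Bousquet_Ponce_VanSchaftingen_2501_18149}, itself going back to Fuglede's analysis of exceptional curve families \cite{Fuglede_1957}. Once $g_e$ is in hand, the one-dimensional estimate and the passage from edges to $\Sigma$ are routine.
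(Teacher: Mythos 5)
Your proposal is correct and follows the same Fuglede-screening strategy as the paper: build a scalar weight $w$ from a fast-converging smooth approximating sequence so that $\int_\Sigma w\compose\gamma<\infty$ forces $u\compose\gamma$ to be absolutely continuous on every edge with derivative $(\Deriv u\compose\gamma)\gamma'$, then apply a truncated one-dimensional Poincar\'e estimate; your Stage 2 is essentially a re-derivation of the paper's \cref{lemma_truncated_1d}. The one variation worth noting is the form of $w$: the paper takes $w=\sum_n\brk{\abs{u_n-u}+\abs{\Deriv u_n-\Deriv u}}$, which has the advantage that $w\compose\gamma<\infty$ a.e.\ directly identifies the a.e.\ limit of $u_n\compose\gamma$ with $u\compose\gamma$; your consecutive-difference form $\sum_j 2^j\abs{u_{j+1}-u_j}$ does not by itself rule out $\gamma$ spending positive time in the null set where the approximants miss the chosen representative of $u$, which is precisely why you must supplement it with the indicator terms $\charfun{E_j}$. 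Both choices work, but the paper's needs no such extra scaffolding. One small precision for your final step: take the supremum over edges (not the sum) of the bounds $\abs{g_e\brk{t_0}}\le 2\eta+\int_e\brk{\cdots}$, and note that each per-edge integral is dominated, by non-negativity of the integrands, by the corresponding integral over all of $\Sigma$ --- this is how the single additive constant $2\eta$ survives.
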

The proof of \cref{proposition_W11_uniform_path} relies on the following one-dimensional truncated Sobolev embedding.

\begin{lemma}
\label{lemma_truncated_1d}
If \(u \in \sobolev^{1, 1} \brk{\intvo{0}{1}, \Rset^\nu}\), then for every \(\rho \in \intvo{0}{1}\), every \(\eta \in \intvr{0}{\infty}\) and for almost every \(t \in \intvo{0}{1}\),
\begin{equation}
\label{eq_Sie7yeungaiKu6miereivi5a}
 \abs{u\brk{t}}
 \le 2 \eta + \int_0^1 \brk[\Big]{\abs{u'} - \frac{\eta}{\rho}}_+ + \frac{\brk{\abs{u} - \eta}_+}{\rho}\eqpunct.
\end{equation}
\end{lemma}
\begin{proof}
Assuming without loss of generality that \(t \in \intvo{a}{a + \rho} \subseteq \intvo{0}{1}\) for some \(a \in \intvc{0}{1}\), we have
\[
\begin{split}
 u \brk{t}
 &= \frac{1}{\rho}\int_a^{a + \rho} \int_{\tau}^{t} u'\brk{\sigma} \dif \sigma + u \brk{\tau} \dif \tau\\
&= \int_a^{t} \frac{\tau - a}{\rho} u'\brk{\tau}\dif \tau -
\int_{t}^{a + \rho} \frac{a + \rho - \tau}{\rho} u'\brk{\tau}\dif \tau
+
\int_{a}^{a + \rho} \frac{u \brk{ \tau}}{\rho} \dif \tau
 \eqpunct{,}
\end{split}
\]
and thus
\[
 \abs{u \brk{t}}
 \le \int_{a}^{a + \rho} \abs{u'} + \frac{\abs{u}}{\rho}\\
 \le2 \eta +  \int_0^1 \brk[\Big]{\abs{u'} - \frac{\eta}{\rho}}_+ + \frac{\brk{\abs{u} - \eta}_+}{\rho}
 \eqpunct{,}
\]
from which the conclusion \eqref{eq_Sie7yeungaiKu6miereivi5a} follows.
\end{proof}

We now proceed to the proof of \cref{proposition_W11_uniform_path}.

\begin{proof}[Proof of \cref{proposition_W11_uniform_path}]
Following
\citelist{\cite{Fuglede_1957}*{Thm.\ 3 (f)}\cite{Bousquet_Ponce_VanSchaftingen_2501_18149}},
let \(\brk{u_n}_{n \in \Nset}\) be a sequence in \(\smooth^{\infty}\brk{\manifold{M}, \Rset^\nu}\) such that
\[
 \sum_{n \in \Nset} \int_{\manifold{M}} \abs{\Deriv u_n - \Deriv u} + \abs{u_n - u}<\infty
\]
and let
\[
 w \defeq \sum_{n \in \Nset} \abs{\Deriv u_n - \Deriv u} + \abs{u_n - u}\eqpunct{.}
\]
Given a Lipschitz-continuous mapping \(\gamma \colon \Sigma \to \manifold{M}\) satisfying \eqref{eq_Rae7xiighoolao4yo0aeS2qu}, it then follows that \(u \compose \gamma \in \sobolev^{1, 1}\brk{\Sigma, \Rset^\nu}\) and
\(\brk{u\compose \gamma}' = \brk{\Deriv u \compose \gamma }\gamma'\) almost everywhere on \(\Sigma\).
The conclusion then follows from \cref{lemma_truncated_1d}.
\end{proof}

We are now in position to prove \cref{proposition_Fuglede_W11_Ei}.

\begin{proof}[Proof of \cref{proposition_Fuglede_W11_Ei}]
By assumption, there exists a sequence \(\brk{t_k}_{k \in \Nset}\) and an increasing sequence \(\brk{n_k}_{k\in \Nset}\) in \(\Nset\)  such that \(t_k \to \infty\) and
\begin{equation}
\label{eq_eShae2hipohBa9CheiBeu2Ri}
 \sum_{k \in \Nset}
  \int_{\manifold{M}}
 \brk{ \abs{\Deriv u_{n_k}} - t_k}_+ + \brk{\abs{\Deriv u } - t_k}_+
 + t_k \brk{\abs{u_{n_k} - u} - \eta}_+
 < \infty
 \eqpunct.
\end{equation}
We define then the function \(w \colon \manifold{M} \to \intvc{0}{\infty}\) by
\begin{equation}
\label{eq_Uyoopoo2Ohwie5Shuj6aPhie}
 w \defeq \sum_{k \in \Nset}
 \brk{ \abs{\Deriv u_{n_k}} - t_k}_+ + \brk{\abs{\Deriv u } - t_k}_+
 + t_k \brk{\abs{u_{n_k} - u} - \eta}_+
 \eqpunct.
\end{equation}
The condition \eqref{eq_nailichuequu3Pieteoz8ij7} then follows from \eqref{eq_eShae2hipohBa9CheiBeu2Ri} and \eqref{eq_Uyoopoo2Ohwie5Shuj6aPhie}.
By increasing the value of \(w\) if necessary, we can assume that the conclusion of \cref{proposition_W11_uniform_path} applies with \(w\) to the maps \(u_{n_k}- u\) respectively.

Given a Lipschitz-continuous mapping \(\gamma \colon \Sigma \to \manifold{M}\) such that the condition \eqref{eq_shieraac8Voh9Dai4iegeiSh} holds, by \cref{proposition_W11_uniform_path},  we have for every \(k \in \Nset\), \(\eta \in \intvo{0}{\infty}\) and \(\rho \in \intvo{0}{1}\),
\[
\begin{split}
&\norm{u \compose \gamma - u_{n_k} \compose \gamma}_{\lebesgue^\infty\brk{\Sigma, \Rset^\nu}}\\
&\quad \le 2 \eta +
   \int_{\Sigma}
  \brk[\Big]{\seminorm{\gamma}_{\mathrm{Lip}} \brk{\abs{\Deriv u - \Deriv u_{n_k}}\compose \gamma} - \frac{\eta}{\rho}}_+ + \frac{\brk{\abs{u_{n_k} - u} - \eta}_+}{\rho}\\
&\quad \le 2 \eta +
   \int_{\Sigma}
  \brk[\Big]{\seminorm{\gamma}_{\mathrm{Lip}} \brk{\abs{\Deriv u_{n_k}}\compose \gamma} - \frac{\eta}{2\rho}}_+ \!
  + \brk[\Big]{\seminorm{\gamma}_{\mathrm{Lip}} \brk{\abs{\Deriv u}\compose \gamma} - \frac{\eta}{2\rho}}_+  \!
  + \frac{\brk{\abs{u_{n_k} - u} - \eta}_+}{\rho}
  \eqpunct.
\end{split}
\]
Taking
\[
  \rho_k \defeq \frac{\eta}{2 t_k \seminorm{\gamma}_{\mathrm{Lip}}}
  \eqpunct{,}
\]
we have then, if \(k \in \Nset\) is large enough so that  \(\rho_k \le 1\),
\begin{multline*}
 \norm{u \compose \gamma - u_{n_k} \compose \gamma}_{\lebesgue^\infty\brk{\Sigma, \Rset^\nu}}\\
 \le 2\eta + \seminorm{\gamma}_{\mathrm{Lip}} \int_{\Sigma} \brk{\abs{\Deriv u_{n_k}}\compose \gamma - t_k}_+ + \brk{\abs{\Deriv u}\compose \gamma - t_k}_+
 + \frac{2 t_k  \brk{\abs{u_{n_k} - u} \compose \gamma - \eta}_+}{ \eta} \eqpunct{.}
\end{multline*}
It follows from \eqref{eq_shieraac8Voh9Dai4iegeiSh} and \eqref{eq_Uyoopoo2Ohwie5Shuj6aPhie} that
\[
  \lim_{k \to \infty}
  \int_{\Sigma} \brk{\abs{\Deriv u_{n_k}}\compose \gamma - t_k}_+ + \brk{\abs{\Deriv u}\compose \gamma - t_k}_+
 + t_k \brk{\abs{u_{n_k} - u} \compose \gamma - \eta}_+ = 0
\]
and thus
\[
 \limsup_{k \to \infty} \, \norm{u \compose \gamma - u_{n_k} \compose \gamma}_{\lebesgue^\infty\brk{\Sigma, \Rset^\nu}} \le 2 \eta \eqpunct.
\]
If \(\eta\) is small enough, this implies that the mappings \(u\) and \(u_{n_k}\) are homotopic in \(\continuous \brk{\Sigma, \manifold{N}}\) for \(k\) large enough.
\end{proof}

The counterpart of \cref{proposition_Fuglede_W11_Ei} for \(p > 1\) reads as:

\begin{proposition}
\label{proposition_Fuglede_W1p_Ei}
For every compact Riemannian manifold \(\manifold{N}\), there exists \(\eta \in \intvo{0}{\infty}\) such that if \(\manifold{M}\) is a compact Riemannian manifold and if \(\brk{u_n}_{n \in \Nset}\) is a sequence in \(\sobolev^{1, p}\brk{\manifold{M}, \manifold{N}}\)
satisfying
\[
 \lim_{n \to \infty} \int_{\manifold{M}} \brk{\abs{u_n - u} - \eta}_+ = 0
\]
and
\[
 \lim_{t \to \infty}
 \sup_{n \in \Nset}
 \int_{\manifold{M}} \brk{\abs{\Deriv u_n} - t}_+^p = 0\eqpunct,
\]
then there exists a subsequence \(\brk{u_{n_k}}_{k \in \Nset}\) and a Borel-measurable function \(w \colon \manifold{M} \to \intvc{0}{\infty}\) such that
\begin{equation}
\label{eq_zohGhohzu1ahthee3aiquiva}
 \int_{\manifold{M}} w < \infty
\end{equation}
and such that if \(\Sigma\) is a finite \(p\)-dimensional simplicial complex,
if \(\gamma \colon \Sigma \to \manifold{M}\) is Lipschitz-continuous and satisfies
\begin{equation}
\label{eq_keequeishaXi9zai4soodah8}
 \int_{\Sigma}w \compose \gamma < \infty
 \eqpunct,
\end{equation}
then for \(k \in \Nset\) large enough, the mappings
\(u \compose \gamma\) and \(u_{n_k} \compose \gamma\) are homotopic in \(\VMO \brk{\Sigma, \manifold{N}}\).
\end{proposition}

Here and in the sequel, each simplex of the \(p\)-dimensional simplicial complex \(\Sigma\) is endowed with its Euclidean structure as a regular complex, spanned by \(e_0/\sqrt{2}, \dotsc, e_p/\sqrt{2}\), where \(e_0, \dotsc, e_p\) form an orthonormal basis of the Euclidean space \(\Rset^{p + 1}\).

\begin{proposition}
\label{proposition_VMO_maximal_path}
If \(\manifold{M}\) is a compact Riemannian manifold, there exists a constant \(C \in \intvo{0}{\infty}\) such that if \(u \in \sobolev^{1, p} \brk{\manifold{M}, \Rset^\nu}\), if \(\Sigma\) is a finite \(p\)-dimensional homogeneous simplicial complex, if the mapping  \(\gamma \colon \Sigma \to \manifold{M}\) is Lipschitz-continuous and if for almost every \(t \in \Sigma\)
\begin{equation}
\label{eq_aequech9Aingie0eu1aap3Mo}
 \liminf_{\delta \to 0} \fint_{B_\delta \brk{\gamma \brk{t}}} \abs{u \brk{x} - u \brk{\gamma\brk{t}}}\dif x = 0
 \eqpunct,
\end{equation}
then one has for every \(\eta \in \intvr{0}{\infty}\) and every \(r \in \intvo{0}{1}\)
\begin{equation}
\label{eq_ka3ePeodeedoX3oowoo8Huyu}
  \fint_{B_r \brk{a}} \fint_{B_r \brk{a}} \abs{u \brk{\gamma \brk{t}}- u \brk{\gamma\brk{s}}}^p\dif t \dif s
  \le C
   \brk[\bigg]{
\eta^p +
  \int_{\Sigma}
  \brk[\Big]{\mathfrak{M} \brk[\Big]{ \seminorm{\gamma}_{\mathrm{Lip}} \abs{\Deriv u}\compose \gamma - \frac{\eta}{r}}_+}^p}
  \eqpunct.
\end{equation}
\end{proposition}

Here \(\mathfrak{M} \abs{\Deriv u}\colon \manifold{M }\to \intvc{0}{+\infty}\) denotes the Hardy-Littlewood maximal function of the function \(\abs{\Deriv u} \) defined for each \(x \in \manifold{M}\) by
\[
\mathfrak{M} \abs{\Deriv u} \brk{x}
= \sup_{r > 0} \fint_{B_r \brk{x}} \abs{\Deriv u}
\]

In order to prove \cref{proposition_VMO_maximal_path}, we introduce an additional truncation argument in the proof of the corresponding property for \(\eta = 0\) \cite{Bousquet_Ponce_VanSchaftingen_2501_18149}.

\begin{proof}[Proof of \cref{proposition_VMO_maximal_path}]
By the Lusin-Lipschitz formula \citelist{\cite{Liu_1977}*{Lem.\ 2}\cite{Acerbi_Fusco_1984}*{Lem.\ I.11}\cite{Bojarski_1990}\cite{Hajlasz_1996}*{p. 404}\cite{Jabin_2010}*{(3.3)}}, if
\[
 \liminf_{\delta \to 0} \fint_{B_\delta \brk{y}} \abs{u \brk{x} - u \brk{y}}\dif x =
 \liminf_{\delta \to 0} \fint_{B_\delta \brk{z}} \abs{u \brk{x} - u \brk{z}} \dif x =
 0
 \eqpunct,
\]
then we have
\[
 \abs{u \brk{y} - u \brk{z}}
 \le \Cl{cst_uetaemahZ2fee6wah5Ti8xee}\, d\brk{y, z}\,
 \brk{\mathfrak{M} \abs{\Deriv u} \brk{y} + \mathfrak{M} \abs{\Deriv u} \brk{z}}
 \eqpunct.
\]
For almost every \(s, t \in \Sigma\) it follows from our assumption \eqref{eq_aequech9Aingie0eu1aap3Mo} that
\[
 \abs{u \brk{\gamma \brk{t}} - u \brk{\gamma{\brk{s}}}}
 \le \Cr{cst_uetaemahZ2fee6wah5Ti8xee}\, d\brk{\gamma \brk{s}, \gamma\brk{t}}\,
 \brk{\mathfrak{M} \abs{\Deriv u} \brk{\gamma \brk{t}} + \mathfrak{M} \abs{\Deriv u} \brk{\gamma \brk{s}}}
 \eqpunct,
\]
and therefore, by integration and by symmetry,
\begin{equation}
\label{eq_oH1ooLohmoo5phei1oeboh2F}
   \fint_{B_r \brk{a}} \fint_{B_r \brk{a}} \abs{u \brk{\gamma \brk{t}} - u\brk{\gamma\brk{s}}}^p\dif t \dif s
   \le \C  \int_{B_r \brk{a}}
   \seminorm{\gamma}_{\mathrm{Lip}}^p \brk{\mathfrak{M} \abs{\Deriv u} \brk{\gamma \brk{t}}}^p \dif t
   \eqpunct.
\end{equation}
We next note that
\[
  \seminorm{\gamma}_{\mathrm{Lip}}  \abs{\Deriv u}
 \le \brk[\Big]{\seminorm{\gamma}_{\mathrm{Lip}}  \abs{\Deriv u} - \frac{\eta}{r}}_+ + \frac{\eta}{r}
 \eqpunct,
\]
so that
\[
   \mathfrak{M} \brk{\seminorm{\gamma}_{\mathrm{Lip}}  \abs{\Deriv u}}
  \le \mathfrak{M} \brk[\Big]{\seminorm{\gamma}_{\mathrm{Lip}}  \abs{\Deriv u} - \frac{\eta}{r}}_+ + \frac{\eta}{r}
\]
and finally
\begin{equation}
\label{eq_thei8Bie0eithae3hohSoy1i}
 \brk{\seminorm{\gamma}_{\mathrm{Lip}}\mathfrak{M} \abs{\Deriv u} \brk{\gamma \brk{t}}}^p
 \le 2^{p -1}\brk[\Big]{\mathfrak{M} \brk[\Big]{\seminorm{\gamma}_{\mathrm{Lip}}  \abs{\Deriv u} - \frac{\eta}{r}}_+\brk{\gamma \brk{t}}}^p + \frac{2^{p - 1}\eta^p}{r^p}
 \eqpunct.
\end{equation}
The conclusion \eqref{eq_ka3ePeodeedoX3oowoo8Huyu} follows from \eqref{eq_oH1ooLohmoo5phei1oeboh2F} and \eqref{eq_thei8Bie0eithae3hohSoy1i} since \(\dim \Sigma = p\).
\resetconstant
\end{proof}

We deduce now \cref{proposition_Fuglede_W1p_Ei} from \cref{proposition_VMO_maximal_path}.

\begin{proof}[Proof of \cref{proposition_Fuglede_W1p_Ei}]
We fix a sequence \(\brk{t_k}_{k \in \Nset}\) such that \(t_k \to \infty\) and an increasing sequence \(\brk{n_k}_{k \in \Nset}\) in \(\Nset\) such that
\[
 \sum_{k \in \Nset}
 \int_{\manifold{M}} \brk{\abs{\Deriv u_{n_k}} - t_k}_+^p + \brk{\abs{\Deriv u} - t_k}_+^p
 + t_k^p \brk{\abs{u_{n_k} - u} - \eta}_+ < \infty
 \eqpunct.
\]
Next, we define the function \(w \colon \manifold{M} \to \intvc{0}{\infty}\) by
\[
 w  \defeq \sum_{k \in \Nset}
 \brk{\mathfrak{M} \brk{\abs{\Deriv u_{n_k}} - t_k}_+}^p + \brk{\mathfrak{M} \brk{\abs{\Deriv u} - t_k}_+}^p
 + t_k^p \brk{\abs{u_{n_k} - u} - \eta}_+
\]
on the set of Lebesgue points of every \(u_{n_k}\) and of \(u\) and by \(w \defeq \infty\) elsewhere.

By Lebesgue’s differentiation theorem and the classical maximal function theorem, since \(p > 1\), (see for example \citelist{\cite{Stein_1970}*{Thm.\ I.1}\cite{Duoandikoetxea_2001}*{Thm.\ 2.16}}), we have
\[
\begin{split}
 \int_{\manifold{M}} w
 &\le \sum_{k \in \Nset}
  \int_{\manifold{M}}\brk{\mathfrak{M} \brk{\abs{\Deriv u_{n_k}} - t_k}_+}^p
  +  \int_{\manifold{M}} \brk{\mathfrak{M} \brk{\abs{\Deriv u} - t_k}_+}^p
 + t_k^p  \brk{\abs{u_{n_k} - u} - \eta}_+\\
 &\le \C \sum_{k \in \Nset} \int_{\manifold{M}} \brk{\abs{\Deriv u_{n_k}} - t_k}_+^p + \brk{\abs{\Deriv u} - t_k}_+^p
 + \sum_{k \in \Nset} \int_{\manifold{M}} t_k^p \brk{\abs{u_{n_k} - u} - \eta}_+ < \infty\eqpunct{,}
\end{split}
\]
so that the integrability condition  \eqref{eq_zohGhohzu1ahthee3aiquiva} holds.

Assuming now that \(\Sigma\) is a finite \(p\)-dimensional simplicial complex such that \eqref{eq_keequeishaXi9zai4soodah8} holds, taking
\[
 \rho_k \defeq \frac{\eta}{t_k \seminorm{\gamma}_{\mathrm{Lip}}}
 \eqpunct,
\]
we have then by \cref{proposition_VMO_maximal_path}, for \(r < \rho_k < 1\),
\[
 \frac{\eta}{r} \ge t_k \seminorm{\gamma}_{\mathrm{Lip}}\eqpunct,
\]
and thus
\begin{gather*}
\begin{split}
  \fint_{B_r \brk{a}} \fint_{B_r \brk{a}} \abs{u_{n_k} \brk{\gamma \brk{t}} - u_{n_k}\brk{\gamma\brk{s}}}^p\dif t \dif s
  &\le \Cl{cst_iRoh8eW8ka4iechoh6eZai9f} \brk[\bigg]{
\eta^p +
  \int_{\Sigma} \brk[\Big]{\mathfrak{M} \brk[\Big]{ \seminorm{\gamma}_{\mathrm{Lip}} \abs{\Deriv u_{n_k}}\compose \gamma - \frac{\eta}{r}}_+}^p}\\
  &\le \Cr{cst_iRoh8eW8ka4iechoh6eZai9f} \brk[\bigg]{
\eta^p +
\seminorm{\gamma}_{\mathrm{Lip}}^p
\hspace{-.3em}
  \int_{\Sigma}
  \brk[\big]{\mathfrak{M} \brk{ \abs{\Deriv u_{n_k}}\compose \gamma - t_k}_+}^p}\eqpunct,\\
\end{split}\\
\begin{split}
  \fint_{B_r \brk{a}} \fint_{B_r \brk{a}} \abs{u \brk{\gamma \brk{t}} - u\brk{\gamma\brk{s}}}^p\dif t \dif s
  &\le \Cr{cst_iRoh8eW8ka4iechoh6eZai9f} \brk[\bigg]{
\eta^p +
  \int_{\Sigma} \brk[\Big]{\mathfrak{M} \brk[\Big]{ \seminorm{\gamma}_{\mathrm{Lip}} \abs{\Deriv u}\compose \gamma - \frac{\eta}{r}}_+}^p}\\
  &\le \Cr{cst_iRoh8eW8ka4iechoh6eZai9f} \brk[\bigg]{
\eta^p + \seminorm{\gamma}_{\mathrm{Lip}}^p
  \hspace{-.3em}
  \int_{\Sigma}
  \brk[\big]{\mathfrak{M} \brk{\abs{\Deriv u}\compose \gamma - t_k}_+}^p}\\
\end{split}
\intertext{and}
\int_{\Sigma} \frac{\brk{\abs{u_{n_k} - u}- \eta}_+}{\rho_k^p}
\le \frac{\seminorm{\gamma}_{\mathrm{Lip}}^p}{\eta^p} t_{k}^p
\int_{\Sigma} \brk{\abs{u_{n_k} - u}- \eta}_+\eqpunct.
\end{gather*}
By \eqref{eq_keequeishaXi9zai4soodah8}, both members in the previous inequalities go to \(0\) as \(k \to \infty\).
The conclusion then follows from \cref{proposition_homotopy_VMO_Rp}, provided \(\eta \in \intvo{0}{\infty}\) was taken small enough.
\resetconstant
\end{proof}

\begin{proof}[Proof of \cref{theorem_W1p_equiintegrable_strong} through generic screening]
Let \(u \in \soboleh^{1, p}_{\mathrm{Ei}}\brk{\manifold{M}, \manifold{N}}\) and let the Borel-measurable function \(w \colon \manifold{M}\to \intvc{0}{\infty}\) be given by \cref{proposition_Fuglede_W1p_Ei}.
For every \(\ell \in \Nset\) satisfying \(\ell \ge p\), every
\(\ell\)-dimensional homogeneous simplicial complex
\(\Sigma\) and every Lipschitz-continuous map \(\sigma \colon \Sigma \to \manifold{M}\) such that
\[
 \int_{\Sigma^p} w \compose \sigma \restr{\Sigma^p} <\infty\eqpunct{,}
\]
where \(\Sigma^p\) is the \(p\)-dimensional subskeleton of \(\Sigma\),
the map \(u \compose \sigma\restr{\Sigma^p}  \) is homotopic in \(\mathrm{VMO}\brk{\Sigma^p, \manifold{N}}\) to \(u_{n_k} \compose \sigma\restr{\Sigma^p}  = \brk{u_{n_k} \compose \sigma}\restr{\Sigma^p}\), where \(u_{n_k} \compose \sigma \in \continuous \brk{\Sigma, \manifold{N}}\).
Thanks to the strong approximation criterion through generic screening \cite{Bousquet_Ponce_VanSchaftingen_2501_18149}, we conclude that \(u \in \soboleh^{1, p}_{\mathrm{St}}\brk{\manifold{M}, \manifold{N}}\).
\end{proof}

\section{Higher-order Sobolev spaces}
\label{section_higher_order}
\subsection{Definitions and statements}
As counterparts of \eqref{eq_Roejiejai2zah4iek0auwae0}, \eqref{eq_cho3eibic6eo6ieSh9iPhieh} and \eqref{eq_Thaedah9eeyohtheesho4aic}, we define for \(k \in \Nset\setminus \set{0}\) and \(p \in \intvr{1}{\infty}\),
the \emph{higher-order homogeneous Sobolev space}
\[
 \sobolev^{k, p}\brk{\manifold{M}, \manifold{N}}
 = \set{u \in \sobolev^{k, p}\brk{\manifold{M}, \Rset^\nu} \st  u \in \manifold{N} \text{ almost everywhere in \(\manifold{N}\)} }\eqpunct{,}
\]
the associated \emph{strong closure of smooth maps}
\begin{multline*}
 \soboleh^{k, p}_{\mathrm{St}} \brk{\manifold{M}, \manifold{N}}
 \defeq \Bigl\{u \in \sobolev^{k, p} \brk{\manifold{M}, \manifold{N}}
 \st \lim_{n \to \infty} \sum_{j = 0}^k  \int_{\manifold{M}} \abs{\Deriv^j u_n - \Deriv^j u}^p = 0\\  \text{ and } u_n \in \smooth^\infty \brk{\manifold{M}, \manifold{N}}\Bigr\}
\end{multline*}
and its \emph{equi-integrable sequential closure}
\begin{multline*}
 \soboleh^{k, p}_{\mathrm{Ei}} \brk{\manifold{M}, \manifold{N}}
 \defeq \Bigl\{u \in \sobolev^{k, p}\brk{\manifold{M}, \manifold{N}}
 \st \lim_{n \to \infty} \int_{\manifold{M}} \abs{u_n - u}^p = 0\eqpunct,\\ \lim_{t \to \infty} \sup_{n \in \Nset} \int_{\manifold{M}} \brk{\abs{\Deriv^k u_n} - t}_+^p = 0 \text{ and } u_n \in \smooth^\infty \brk{\manifold{M}, \manifold{N}}\Bigr\}
 \eqpunct{.}
\end{multline*}
As previously, we have the immediate inclusion
\begin{equation}
\label{eq_yu7Shei1saingoojeic0chie}
  \soboleh^{k, p}_{\mathrm{St}} \brk{\manifold{M}, \manifold{N}}
  \subseteq \soboleh^{k, p}_{\mathrm{Ei}} \brk{\manifold{M}, \manifold{N}}
  \eqpunct{.}
\end{equation}

The result of \cref{theorem_W1p_equiintegrable_strong} extends to higher-order Sobolev spaces.

\begin{theorem}
\label{theorem_Wkp_equiintegrable_strong}
If \(\manifold{M}\) and \(\manifold{N}\) are compact Riemannian manifolds, if \(k \in \Nset \setminus \set{0}\) and if \(p \in \intvr{1}{\infty}\), then
\begin{equation}
\label{eq_saah5uCie3xee0fu1eofo1ue}
  \soboleh^{k, p}_{\mathrm{St}} \brk{\manifold{M}, \manifold{N}}
 =
 \soboleh^{k, p}_{\mathrm{Ei}} \brk{\manifold{M}, \manifold{N}}\eqpunct{.}
\end{equation}
\end{theorem}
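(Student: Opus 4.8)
Only the inclusion $\soboleh^{k, p}_{\mathrm{Ei}}\brk{\manifold{M}, \manifold{N}} \subseteq \soboleh^{k, p}_{\mathrm{St}}\brk{\manifold{M}, \manifold{N}}$ needs to be proved, the reverse being \eqref{eq_yu7Shei1saingoojeic0chie}. Exactly as in the first-order discussion, when $kp \ge \dim \manifold{M}$ or $kp \not\in \Nset$ the higher-order analogues of the Schoen--Uhlenbeck embedding into $\VMO$, respectively of Bethuel's theorem on the coincidence of the strong and bounded closures, make \eqref{eq_saah5uCie3xee0fu1eofo1ue} follow from already known density results; hence the content is the case $kp \in \set{1, \dotsc, \dim \manifold{M} - 1}$, for which $\floor{kp} = kp$. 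The plan is to reproduce the strategy of \S\ref{section_equiint}: show that equi-integrable convergence in $\sobolev^{k, p}$ preserves the homotopy class of generic restrictions to a $kp$-dimensional skeleton, and then invoke the higher-order strong-approximability criteria. The decisive observation is that on a $kp$-dimensional complex $\sobolev^{k, p}$ embeds into $\sobolev^{1, kp}$, so the relevant first-order exponent equals the dimension of the skeleton and \cref{proposition_homotopy_equiintegrable_Rp} together with the propositions following it apply with $kp$ in place of $p$, \emph{provided the restricted gradient is equi-integrable at the exponent $kp$}.

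Fix $u \in \soboleh^{k, p}_{\mathrm{Ei}}\brk{\manifold{M}, \manifold{N}}$ and $u_n \in \smooth^\infty\brk{\manifold{M}, \manifold{N}}$ with $u_n \to u$ in $\lebesgue^p$ and $\brk{\abs{\Deriv^k u_n}}_{n \in \Nset}$ equi-integrable in $\lebesgue^p$. Since equi-integrable sequences are bounded and $\norm{\Deriv^j v}_{\lebesgue^p} \le C\brk{\norm{\Deriv^k v}_{\lebesgue^p} + \norm{v}_{\lebesgue^p}}$ on a compact manifold, the sequence $\brk{u_n}_{n \in \Nset}$ is bounded in $\sobolev^{k, p}\brk{\manifold{M}, \Rset^\nu}$, and for $1 \le j \le k - 1$ the subcritical Sobolev embedding of $\sobolev^{k - j, p}$ into some $\lebesgue^{q_j}$ with $q_j > p$ makes $\brk{\abs{\Deriv^j u_n}}_{n \in \Nset}$ equi-integrable in $\lebesgue^p$ as well. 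Write $u_n = \Phi_n + \Psi_n$ where $\Phi_n$ is a fixed smooth mollification of $u_n$: then $\brk{\Phi_n}_{n \in \Nset}$ is bounded in $\smooth^\infty\brk{\manifold{M}, \Rset^\nu}$ — in particular uniformly Lipschitz — while $\Deriv^k \Psi_n = \Deriv^k u_n - \Deriv^k \Phi_n$ is still equi-integrable in $\lebesgue^p$ and $\Psi_n$ converges in $\lebesgue^p$. On a $kp$-dimensional cube or simplex $Q$ of a skeleton, representing $\Psi_n$ through the Riesz potential $I_{k - 1}$ of its top derivative gives $\Deriv \Psi_n = \brk{\text{polynomial part}} + I_{k - 1}\brk{\Deriv^k \Psi_n}$; since $I_{k - 1}$ maps $\lebesgue^p\brk{Q}$ into $\lebesgue^{kp}\brk{Q}$ critically and carries families equi-integrable in $\lebesgue^p$ to families equi-integrable in $\lebesgue^{kp}$ (split the function at a level $M$ independent of $n$, bound the low part in $\lebesgue^\infty$ by a Riesz-potential estimate and the high part in $\lebesgue^{kp}$ by smallness of its $\lebesgue^p$ norm), one obtains that $\Deriv\brk{u_n \restr{\Sigma}} = \Deriv\brk{\Phi_n\restr{\Sigma}} + \Deriv\brk{\Psi_n\restr{\Sigma}}$ is equi-integrable in $\lebesgue^{kp}\brk{\Sigma}$ for a generic $kp$-dimensional skeleton $\Sigma$, along a subsequence $\brk{u_{n_k}}_{k \in \Nset}$ — the passage to a generic $\Sigma$ and to a subsequence being carried out by a Fubini and Borel--Cantelli argument as in the proof of \cref{corollary_equiintegrable_W1p_skeletons}, or, to avoid uniform-in-$n$ estimates on $\Sigma$, by the generic screening of \cref{proposition_Fuglede_W1p_Ei} with a weight $w$ dominating the truncated Riesz potentials of $\Deriv^k u_{n_k}$ together with the uniformly controlled contribution of $\Phi_{n_k}$.

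With this in hand, $u\restr{\Sigma}$ and $u_{n_k}\restr{\Sigma}$ both belong to $\sobolev^{k, p}\brk{\Sigma, \manifold{N}} \subseteq \sobolev^{1, kp}\brk{\Sigma, \manifold{N}}$, their gradients are equi-integrable in $\lebesgue^{kp}\brk{\Sigma}$, and $u_{n_k}\restr{\Sigma} \to u\restr{\Sigma}$ in the averaged sense required by \cref{proposition_homotopy_equiintegrable_Rp} (again by Fubini and a summable choice of truncation levels, using $\int_{\manifold{M}} \brk{\abs{u_n - u} - \eta}_+ \to 0$). Since a generic skeleton is a regular $kp$-dimensional cubical or simplicial complex of injectivity radius of order its mesh, \cref{proposition_homotopy_equiintegrable_Rp} and its consequences in \S\ref{section_equiint} — valid on such complexes with the critical exponent $kp = \dim \Sigma$ — show that for a generic $\Sigma$ and $k$ large, $u\restr{\Sigma}$ and $u_{n_k}\restr{\Sigma}$ are homotopic in $\VMO\brk{\Sigma, \manifold{N}}$. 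As $u_{n_k}$ is smooth, $u_{n_k}\restr{\Sigma}$ is the restriction of a continuous, indeed smooth, map $\manifold{M} \to \manifold{N}$; hence a generic $kp$-skeleton restriction of $u$ is, up to $\VMO$-homotopy, the restriction of a continuous map. By the $\sobolev^{k, p}$ counterparts of the strong-approximability criteria used in \cref{proposition_W1p_equiintegrable_strong_local} (the higher-order analogues of \citelist{\cite{Bethuel_1991}\cite{Hang_Lin_2003_II}\cite{Bousquet_Ponce_VanSchaftingen_2501_18149}}), this gives $u \in \soboleh^{k, p}_{\mathrm{St}}\brk{\manifold{M}, \manifold{N}}$, which together with \eqref{eq_yu7Shei1saingoojeic0chie} proves \eqref{eq_saah5uCie3xee0fu1eofo1ue}.

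The main obstacle is the step of the second paragraph. For $k = 1$ the gradient $\Deriv u_n$ already sits at the critical exponent $p$ on the $p$-dimensional skeleton, so the equi-integrability hypothesis transfers directly by Fubini; for $k \ge 2$ the intermediate derivatives $\Deriv u_n, \dotsc, \Deriv^{k - 1}u_n$ lie at \emph{critical} Sobolev exponents on the $kp$-dimensional skeleton just as $\Deriv^k u_n$ does, so their boundedness in $\sobolev^{k, p}$ does not suffice and one genuinely has to propagate the $\lebesgue^p$-equi-integrability of $\Deriv^k u_n$ to $\lebesgue^{kp}$-equi-integrability of $\Deriv\brk{u_n\restr{\Sigma}}$ — this is where the splitting off of a uniformly smooth part, the regularity gain of Riesz potentials on equi-integrable families, and the generic screening enter. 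A subordinate and by now routine point is to verify that the $\VMO$-homotopy toolkit of \S\ref{section_equiint} and the strong-approximability criteria transfer to higher order, the former being already noted in the excerpt.
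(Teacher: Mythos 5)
Your proposal is essentially sound for \(p > 1\), but takes a genuinely different route from the paper's, and it has a real gap at \(p = 1\) when \(k \ge 2\).

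For \(p > 1\), you propose to transfer the \(\lebesgue^p\)-equi-integrability of \(\Deriv^k u_n\) to \(\lebesgue^{kp}\)-equi-integrability of \(\Deriv\brk{u_n\restr{\Sigma}}\) on generic \(kp\)-dimensional skeletons, via a mollification splitting \(u_n = \Phi_n + \Psi_n\) and a truncated Riesz-potential estimate, and then to run the first-order skeleton machinery of \S\ref{section_equiint} with \(kp\) in place of \(p\). The paper instead propagates the equi-integrability \emph{globally} on \(\manifold{M}\), not on skeletons: the pointwise Maz'ya--Shaposhnikova interpolation inequality (\cref{proposition_pointwise_interpolation}), combined with the Hardy--Littlewood maximal function theorem, yields the truncated Gagliardo--Nirenberg estimate \cref{proposition_Wkp_trunc_W1kp_trunc}, hence the inclusion \(\soboleh^{k, p}_{\mathrm{Ei}}\brk{\manifold{M}, \manifold{N}} \subseteq \soboleh^{1, kp}_{\mathrm{Ei}}\brk{\manifold{M}, \manifold{N}}\) of \cref{proposition_Wkp_equi_W1kp_equi}. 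After that it simply applies the already-proved first-order \cref{theorem_W1p_equiintegrable_strong} at exponent \(kp\) together with the identity \(\soboleh^{1, kp}_{\mathrm{St}} \cap \sobolev^{k, p} = \soboleh^{k, p}_{\mathrm{St}}\) from the higher-order approximability characterization. Both approaches rest on the same scaling insight — criticality of \(\sobolev^{k, p} \hookrightarrow \sobolev^{1, kp}\) — but the paper's is considerably shorter: no mollification decomposition, no generic slicing for the reduction step (the slicing is confined to the already-proved first-order theorem), and all estimates are on \(\manifold{M}\) rather than cube by cube.

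The genuine gap is at \(p = 1\), \(k \ge 2\). The step where you bound the ``high part'' \(I_{k-1}\brk{\Deriv^k\Psi_n\charfun{\abs{\Deriv^k\Psi_n}>M}}\) in \(\lebesgue^{kp}\brk{Q}\) by the smallness of its \(\lebesgue^p\) mass is the critical Hardy--Littlewood--Sobolev estimate \(I_{k-1}\colon \lebesgue^p\brk{Q} \to \lebesgue^{kp}\brk{Q}\), and that estimate is false at \(p = 1\): the Riesz potential \(I_{k-1}\) maps \(\lebesgue^1\brk{Q}\) only into weak-\(\lebesgue^{k}\brk{Q}\), not strong \(\lebesgue^k\brk{Q}\). (The paper's maximal-function route has precisely the same endpoint failure, which is why \S\ref{section_higher_order} treats \(p = 1\) separately, remarking that ``the proof above through the Hardy--Littlewood function fails'' there.) For \(p = 1\) the paper avoids Riesz potentials, maximal functions and VMO altogether: since \(\sobolev^{k,1}\brk{\Rset^k}\hookrightarrow \continuous\brk{\Rset^k}\), the truncated Sobolev inequality of \cref{proposition_truncated_Wk1} gives a uniform \(\lebesgue^\infty\) bound on \(\brk{u_n - u}\restr{K^{\Omega, p}_{\varepsilon, \xi}}\) for generic \(\xi\), and one concludes the homotopy in \(\continuous\brk{\cdot, \manifold{N}}\) directly via \cref{proposition_Wk1_good_grid_measure}. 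You would need a comparable endpoint replacement to close this case.
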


Again \cref{theorem_Wkp_equiintegrable_strong} is only relevant when \(kp \in \Nset \setminus \set{0}\) and \(1 \le kp <\dim \manifold{M}\).
In view of \cref{theorem_W1p_equiintegrable_strong}, we only need to prove it for \(k \ge 2\).

\subsection{The case \texorpdfstring{\(p > 1\)}{p>1}}

When \(p > 1\), the proof of \cref{theorem_Wkp_equiintegrable_strong} relies on a relationship between \(\sobolev^{k, p}\)-equi-integrability and \(\sobolev^{1, kp}\)-equi-integrability that refines the classical Gagliardo-Nirenberg interpolation inequality \citelist{\cite{Nirenberg_1959}\cite{Gagliardo_1958}\cite{Gagliardo_1959}} (see also \citelist{\cite{Leoni_2017}*{Thm.\ 12.85}\cite{Brezis_Mironescu_2018}}): for every \(u \in \sobolev^{k,p}\brk{\Rset^m, \Rset^\nu}\cap \lebesgue^\infty \brk{\Rset^m, \Rset^\nu}\)
\begin{equation}
\label{eq_ohgohPeed5fieRiere6Pohxo}
\int_{\Rset^m} \abs{\Deriv u}^{kp}
\le C
\norm{u}_{\lebesgue^\infty\brk{\Rset^m, \Rset^\nu}}^{\brk{k-1}p}
 \int_{\Rset^m} \abs{\Deriv^k u}^{p}\eqpunct{.}
\end{equation}

\begin{proposition}
\label{proposition_Wkp_trunc_W1kp_trunc}
Given \(m, k \in \Nset \setminus \set{0}\) and \(p \in \intvr{1}{\infty}\), there exist constants \(C \in \intvo{0}{\infty}\) and \(\kappa \in \intvo{0}{\infty}\) such that if \(\Omega \subseteq \Rset^m\) is open, if \(\nu \in \Nset \setminus \set{0}\), if \(u \in \sobolev^{k, 1}_{\mathrm{loc}} \brk{\Omega, \Rset^\nu}\),
if
\begin{equation}
\label{eq_IeZohY8amulieB2eemiiB9ph}
  \norm{u}_{\lebesgue^\infty \brk{\Omega, \Rset^\nu}}^{k} \le \kappa t^k \rho^k
  \eqpunct{,}
\end{equation}
and if \(K \subseteq \Rset^m\) is measurable and satisfies
\begin{equation}
 K + B_\rho \subseteq \Omega\eqpunct{,}
\end{equation}
then
\begin{equation}
\label{eq_aeVoochae8deraegh7iekees}
\int_{K}  \brk{\abs{\Deriv u} - t}_+^{kp}
\le
C \int_{\Omega} \brk[\big]{\norm{u}_{\lebesgue^\infty\brk{\Omega, \Rset^\nu}}^{k-1} \abs{\Deriv^k u}
  - \kappa t^k}_+^{p}\eqpunct.
\end{equation}
\end{proposition}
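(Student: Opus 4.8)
The plan is to deduce \eqref{eq_aeVoochae8deraegh7iekees} from a pointwise Gagliardo--Nirenberg estimate, to truncate it, and to optimise the scale at which it is applied so that the hypothesis \eqref{eq_IeZohY8amulieB2eemiiB9ph} becomes exactly the condition keeping the optimal scale admissible. Write $M \defeq \norm{u}_{\lebesgue^\infty \brk{\Omega}}$; one may assume $M > 0$ and that the right-hand side of \eqref{eq_aeVoochae8deraegh7iekees} is finite, and one disposes of $k = 1$ at once (then \eqref{eq_aeVoochae8deraegh7iekees} holds with $C = 1$ as soon as $\kappa \le 1$, since $\brk{\abs{\Deriv u} - t}_+ \le \brk{\abs{\Deriv u} - \kappa t}_+$ and $K \subseteq \Omega$), so I take $k \ge 2$.

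The first step is the pointwise estimate: with $\mathfrak{M}_\ell f \brk{x} \defeq \sup_{0 < r \le \ell} \fint_{B_r \brk{x}} \abs{f}$, for almost every $x$ with $B_\rho \brk{x} \subseteq \Omega$ and every $r \in \intvl{0}{\rho}$,
\[
 \abs{\Deriv u \brk{x}} \le \C \brk[\Big]{\frac{M}{r} + r^{k - 1}\, \mathfrak{M}_\rho \brk{\abs{\Deriv^k u} \charfun{\Omega}} \brk{x}}\eqpunct{.}
\]
This is classical: a polynomial Poincaré inequality produces a polynomial $P_{x, r}$ of degree $< k$ on $B_r \brk{x}$ with $\abs{\Deriv u \brk{x} - \Deriv P_{x, r} \brk{x}} \le \C \int_{B_r \brk{x}} \abs{\Deriv^k u \brk{y}} \abs{x - y}^{k - 1 - m} \dif y \le \C r^{k - 1} \mathfrak{M}_r \abs{\Deriv^k u} \brk{x}$ (a dyadic decomposition of $B_r \brk{x}$ for the last bound), while Markov's inverse estimate on $B_r \brk{x}$ bounds $\abs{\Deriv P_{x, r} \brk{x}}$ by $\C r^{-1} M + \C r^{k - 1} \mathfrak{M}_r \abs{\Deriv^k u} \brk{x}$; finally $\mathfrak{M}_r \le \mathfrak{M}_\rho$ with the cutoff $\charfun{\Omega}$ since $B_\rho \brk{x} \subseteq \Omega$.

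I would then truncate, writing $\mathfrak{M}_\rho \brk{\abs{\Deriv^k u} \charfun{\Omega}} \le s + A$ with $A \defeq \mathfrak{M}_\rho \brk{\brk{\abs{\Deriv^k u} - s}_+ \charfun{\Omega}}$, and choose $s \defeq \kappa t^k / M^{k - 1}$. Minimising the resulting bound $\abs{\Deriv u \brk{x}} \le \C \brk{M/r + r^{k - 1} \brk{s + A \brk{x}}}$ over $r \in \intvl{0}{\rho}$: the unconstrained minimiser has $r^k \simeq M / \brk{s + A \brk{x}} \le M / s = M^k / \brk{\kappa t^k} \le \rho^k$, the last inequality being precisely \eqref{eq_IeZohY8amulieB2eemiiB9ph}, so it is admissible and yields $\abs{\Deriv u \brk{x}} \le \C M^{1 - 1/k} s^{1/k} + \C M^{1 - 1/k} A \brk{x}^{1/k}$. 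As $M^{1 - 1/k} s^{1/k} = \kappa^{1/k} t$, a small enough $\kappa = \kappa \brk{m, k}$ gives, for almost every $x \in K$,
\[
 \brk{\abs{\Deriv u \brk{x}} - t}_+ \le \C M^{1 - 1/k} \brk[\big]{\mathfrak{M}_\rho \brk{\brk{\abs{\Deriv^k u} - s}_+ \charfun{\Omega}} \brk{x}}^{1/k}\eqpunct{.}
\]
When $p > 1$ this finishes the proof: raising to the power $kp$, integrating over $K$, invoking the Hardy--Littlewood maximal theorem on $\lebesgue^p \brk{\Rset^m}$, and using $\brk{\abs{\Deriv^k u} - s}_+ = M^{1 - k} \brk{M^{k - 1} \abs{\Deriv^k u} - \kappa t^k}_+$ gives $\int_K \brk{\abs{\Deriv u} - t}_+^{kp} \le \C \int_\Omega \brk{M^{k - 1} \abs{\Deriv^k u} - \kappa t^k}_+^p$, which is \eqref{eq_aeVoochae8deraegh7iekees}.

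The case $p = 1$ is the hard one, since the maximal theorem fails on $\lebesgue^1$ and the last pointwise bound no longer suffices. Here I would instead combine the first pointwise estimate — applied with $r \simeq M/t$, which through the weak-type $\brk{1, 1}$ maximal inequality bounds $\mathcal{L}^m \brk{\set{\abs{\Deriv u} > t} \cap \Omega}$ by $\C \brk{\kappa t^k}^{-1} M^{-1} \int_\Omega \brk{M^{k - 1} \abs{\Deriv^k u} - \kappa t^k}_+$ — with an integration-by-parts argument of Gagliardo--Nirenberg type: from $\brk{\abs{\Deriv u} - t}_+^k \le \abs{\Deriv u}^{k - 2} \brk{\abs{\Deriv u} - t}_+ \tfrac{\Deriv u : \Deriv u}{\abs{\Deriv u}}$, one integration by parts against a cutoff supported in $\Omega$ moves a derivative onto $u$ and produces a factor $M$; iterating to reach $\Deriv^k u$, truncating at each step, and absorbing the boundary terms with a geometrically refined family of cutoffs (whose total additional reach stays below $\rho$) together with the measure bound just stated, one arrives at \eqref{eq_aeVoochae8deraegh7iekees}. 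The main obstacle is precisely this: getting the exponent $p$, and not $kp$, on the right forces the scale optimisation above, and carrying it out at $p = 1$ — where the boundary and truncation errors of the integration-by-parts argument must be controlled with care — is the crux of the proof.
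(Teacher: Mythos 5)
Your $p > 1$ argument is correct and essentially coincides with the paper's. Both reduce to the Maz'ya--Shaposhnikova pointwise interpolation inequality
\[
\abs{\Deriv u\brk{x}} \le C\brk[\Big]{\norm{u}_{\lebesgue^\infty}^{1-1/k}\brk{\mathfrak{M}\abs{\Deriv^k u}\brk{x}}^{1/k} + \frac{\norm{u}_{\lebesgue^\infty}}{\rho}}\eqpunct,
\]
then subtract the level $t$ and use the $\lebesgue^p$ boundedness of the Hardy--Littlewood maximal operator for $p > 1$. The only cosmetic differences are (i) that you derive the pointwise inequality from a polynomial Poincar\'e estimate and Markov's inverse inequality while the paper gets it from the Sobolev representation formula (citing Maz'ya--Shaposhnikova directly), and (ii) that you truncate $\abs{\Deriv^k u}$ at the level $s = \kappa t^k/M^{k-1}$ \emph{before} applying the maximal operator, whereas the paper writes the truncation \emph{inside} the maximal function as $\mathfrak{M}\brk{M^{k-1}\abs{\Deriv^k u} + M^k/\rho^k - 2\kappa t^k}_+$; both manipulations are legitimate and land in the same place.

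For $p = 1$, your sketch is not a proof — the integration-by-parts scheme is only outlined, and you correctly flag it yourself as the unresolved crux. You should know that the paper's own proof has exactly the same restriction: it explicitly says ``Since $p > 1$, thanks to the Hardy--Littlewood maximal function theorem\dots'', so the written proof in the paper only treats $p > 1$, and the subsection in which the proposition appears is in fact titled ``The case $p > 1$''. The $p = 1$ case of the higher-order theorem is then handled by a completely different argument that \emph{bypasses this proposition}: it exploits the embedding $\sobolev^{k,1}\brk{\Rset^k,\Rset^\nu} \subseteq \continuous\brk{\Rset^k,\Rset^\nu}$ via a truncated Sobolev representation estimate, so no interpolation inequality or maximal function is needed at all. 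In other words, the honest reading of the statement is that it holds (and is used) only for $p > 1$, and the ``$p \in \intvr{1}{\infty}$'' in the hypothesis should be read as a slip for $p \in \intvo{1}{\infty}$. Rather than trying to push the integration-by-parts idea through — which would require controlling boundary and truncation errors very carefully and is not how the paper proceeds — I would simply restrict the proposition to $p > 1$ and, for the endpoint, go directly to the truncated $\sobolev^{k,1}$ embedding.
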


\Cref{proposition_Wkp_trunc_W1kp_trunc} is a counterpart for the Gagliardo-Nirenberg interpolation inequality of the preservation of equi-integrability by the endpoint Sobolev embedding \cite{DeLellis_Focardi_Spadaro_2011}*{Lem.\ A.3}.

The essential ingredient of the proof of \cref{proposition_Wkp_trunc_W1kp_trunc} is the following pointwise interpolation inequality, which can also be used to prove the classical Gagliardo-Nirenberg interpolation inequality \eqref{eq_ohgohPeed5fieRiere6Pohxo}.
This estimate is an immediate consequence of a pointwise interpolation inequality of Maz'ya and Shaposhnikova \cite{Mazya_Shaposhnikova_1999}*{Thm.\ 1 and Rem.\ 3}.

\begin{proposition}
\label{proposition_pointwise_interpolation}
Given \(m, k \in \Nset \setminus \set{0}\), there exists a constant \(C \in \intvo{0}{\infty}\) such that if \(\Omega \subseteq \Rset^m\) is open, if \(\nu \in \Nset \setminus \set{0}\) and if \(u \in \sobolev^{k, 1} \brk{\Omega, \Rset^\nu}\), then for almost every \(x \in \Omega\) such that \(B_\rho \brk{x}\subseteq \Omega\), one has
\begin{equation}
\label{eq_thiovohno4xe6lae5Oez4iSh}
\abs{\Deriv u\brk{x}}
 \le C \brk[\Big]{\norm{u}_{\lebesgue^\infty\brk{\Omega}}^{1 - 1/k} \brk{\mathfrak{M} \abs{\Deriv^k u}  \brk{x}}^{1/k}
 + \frac{\norm{u}_{\lebesgue^\infty\brk{\Omega}}}{\rho} }
 \eqpunct.
\end{equation}
\end{proposition}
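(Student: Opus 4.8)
The plan is to deduce \eqref{eq_thiovohno4xe6lae5Oez4iSh} from the Maz'ya--Shaposhnikova pointwise interpolation inequality \cite{Mazya_Shaposhnikova_1999}*{Thm.\ 1 and Rem.\ 3} by optimizing over a mollification scale. By that result, in the local form of \cite{Mazya_Shaposhnikova_1999}*{Rem.\ 3}, there is a constant \(C\) depending only on \(m\) and \(k\) such that if \(u \in \sobolev^{k, 1}\brk{\Omega, \Rset^\nu}\), then for almost every \(x \in \Omega\) and every \(r \in \intvo{0}{\infty}\) with \(B_r \brk{x} \subseteq \Omega\),
\[
 \abs{\Deriv u \brk{x}}
 \le C \brk[\Big]{r^{k - 1}\, \mathcal{M}\abs{\Deriv^k u}\brk{x} + \frac{1}{r}\fint_{B_r \brk{x}} \abs{u}}
 \le C \brk[\Big]{r^{k - 1}\, \mathcal{M}\abs{\Deriv^k u}\brk{x} + \frac{\norm{u}_{\lebesgue^\infty\brk{\Omega}}}{r}}\eqpunct{,}
\]
where the maximal function is taken over balls contained in \(\Omega\); since only radii \(r \le \rho\) with \(B_r \brk{x}\subseteq B_\rho \brk{x}\subseteq \Omega\) will be used below, this restricted maximal function is dominated by \(\mathcal{M}\abs{\Deriv^k u}\brk{x}\) as in the statement (and the inequality is in any case unchanged if one works instead with the zero extension of \(\Deriv^k u\)).

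It then remains to choose \(r \in \intvl{0}{\rho}\) optimally. Writing \(A \defeq \mathcal{M}\abs{\Deriv^k u}\brk{x}\) and \(B \defeq \norm{u}_{\lebesgue^\infty\brk{\Omega}}\), one may assume \(0 < A < \infty\) and \(0 < B < \infty\), the other cases being immediate. When \(k = 1\), the function \(r \mapsto B/r\) is decreasing, so \(r = \rho\) gives the claim. When \(k \ge 2\), the function \(g\brk{r} \defeq A r^{k - 1} + B/r\) attains its minimum over \(\intvo{0}{\infty}\) at \(r_* \defeq \brk{B/\brk{\brk{k - 1}A}}^{1/k}\), with \(g\brk{r_*} \le C_k\, A^{1/k}B^{1 - 1/k}\). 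If \(r_* \le \rho\), take \(r = r_*\) in the displayed inequality. If \(r_* > \rho\), then \(g\) is nonincreasing on \(\intvl{0}{\rho}\), so take \(r = \rho\); moreover \(r_* > \rho\) forces \(A\rho^k \le B\), hence \(A\rho^{k - 1} \le B/\rho\) and \(g\brk{\rho} \le 2B/\rho\). In either case \(\inf_{0 < r \le \rho} g\brk{r} \le C_k\brk{A^{1/k}B^{1 - 1/k} + B/\rho}\), which together with the displayed estimate yields \eqref{eq_thiovohno4xe6lae5Oez4iSh}.

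There is no genuine obstacle here: the substance is carried by the Maz'ya--Shaposhnikova inequality, and what is left is only the elementary two-case optimization in the scale \(r\). The one point that calls for a little care is the bookkeeping of the conventions for the maximal function (balls contained in \(\Omega\), versus the zero extension of \(\Deriv^k u\)) between \cite{Mazya_Shaposhnikova_1999} and the present statement, which is harmless precisely because the optimal radius never exceeds \(\rho\).
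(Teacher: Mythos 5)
Your proof is correct and follows essentially the same path as the paper's: both proceed by first establishing the one-parameter family of pointwise estimates
\[
 \abs{\Deriv u \brk{x}} \le C \brk[\Big]{r^{k-1}\,\mathfrak{M}\abs{\Deriv^k u}\brk{x} + \frac{\norm{u}_{\lebesgue^\infty}}{r}}\eqpunct,
\]
and then optimizing the radius \(r\) over \(\intvl{0}{\rho}\), clipping the unconstrained minimizer at \(\rho\). The only (cosmetic) difference is that you invoke the Maz'ya--Shaposhnikova inequality directly for the intermediate bound, whereas the paper rederives it from the Sobolev integral representation formula — but since the paper itself introduces the proposition as ``an immediate consequence of a pointwise interpolation inequality of Maz'ya and Shaposhnikova'', this is a presentational choice rather than a genuinely different route; your two-case optimization (taking \(r=r_*\) versus \(r=\rho\)) is the same computation that the paper packages as \(r^k = \min\bigl(\norm{u}_{\lebesgue^\infty}/\mathfrak{M}\abs{\Deriv^k u}\brk{x},\,\rho^k\bigr)\), and your remark reconciling the maximal-function conventions is a reasonable point to spell out.
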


Here \(\mathfrak{M} \abs{\Deriv^k u}\colon \manifold{M} \to \intvc{0}{+\infty}\) denotes the Hardy-Littlewood maximal function of the function \(\abs{\Deriv^k u} \) defined for each \(x \in \Rset^m\) by
\[
\mathfrak{M} \abs{\Deriv^k u} \brk{x}
= \sup_{r > 0} \frac{1}{\mathcal{L}^m \brk{B_r \brk{x}}} \smashoperator{\int_{\Omega \cap B_r \brk{x}}} \abs{\Deriv^k u}
\]
(note that the set in the measure of the average and the integration domain need not be the same).

\begin{proof}[Proof of \cref{proposition_pointwise_interpolation}]
For almost every \(x \in \Omega\) such that \(B_r \brk{x}\subseteq \Omega\), we have by the Sobolev representation formula \cite{Sobolev_1938}*{\S 7} (see \citelist{\cite{Adams_Fournier_2003}*{Lem. 4.15}\cite{Mazya_2011}*{Thm.\ 1.1.10/1}\cite{Mazya_Poborchi_1997}*{Thm.\ 1.5.1/1}})
\begin{equation}
\label{eq_ung3food9Hah4seixaehaixu}
\begin{split}
 \abs{\Deriv u\brk{x}}
 &\le \frac{\C}{r^{m + 1}} \int_{B_r \brk{x}} \abs{u}
 + \C \int_{B_r \brk{x}} \frac{\abs{\Deriv^k u \brk{y}}}{\abs{x - y}^{m + 1 - k}} \dif y
\end{split}
\end{equation}
and thus
\begin{equation}
\label{eq_bahd3hoh6Tu0chei6phi4EiD}
 \abs{\Deriv u\brk{x}}  \le \C \brk[\Big]{\frac{\norm{u}_{\lebesgue^\infty\brk{\Omega, \Rset^\nu}}}{r}
 + r^{k - 1}\mathfrak{M} \abs{\Deriv^k u} \brk{x}}
 \eqpunct.
\end{equation}
We take \(r \in \intvo{0}{\infty}\) such that
\[
r^k = \min \brk[\Big]{\frac{\norm{u}_{\lebesgue^\infty\brk{\Omega, \Rset^\nu}}}{\mathfrak{M} \abs{\Deriv^k u} \brk{x}}, \rho^k}
\eqpunct,
\]
the conclusion \eqref{eq_thiovohno4xe6lae5Oez4iSh} then follows from \eqref{eq_bahd3hoh6Tu0chei6phi4EiD}.
\resetconstant
\end{proof}

We are now in position to prove \cref{proposition_Wkp_trunc_W1kp_trunc}.

\begin{proof}[Proof of \cref{proposition_Wkp_trunc_W1kp_trunc}]
By \cref{proposition_pointwise_interpolation}, for almost every \(x \in K\), we have
\begin{equation}
\label{eq_Rah3job0yaulah7sieKiu9ei}
\begin{split}
 \abs{\Deriv u \brk{x}}
 &\le \Cl{cst_aech1Aishoh2ukiesol1aqu0} \brk[\Big]{\norm{u}_{\lebesgue^\infty\brk{\Omega, \Rset^\nu}}^{k-1} \mathfrak{M} \brk{\abs{\Deriv^k u}} \brk{x}
 + \frac{\norm{u}_{\lebesgue^\infty\brk{\Omega, \Rset^\nu}}^{k}}{\rho^k}}^{1/k}\\
 &= \Cr{cst_aech1Aishoh2ukiesol1aqu0} \brk[\Big]{\mathfrak{M}  \brk[\Big]{\norm{u}_{\lebesgue^\infty\brk{\Omega, \Rset^\nu}}^{k-1} \abs{\Deriv^k u}
 + \frac{\norm{u}_{\lebesgue^\infty\brk{\Omega, \Rset^\nu}}^{k}}{\rho^k}}\brk{x}}^{1/k}\\
 &\le \Cr{cst_aech1Aishoh2ukiesol1aqu0}  \brk[\Big]{\mathfrak{M}  \brk[\Big]{\norm{u}_{\lebesgue^\infty\brk{\Omega, \Rset^\nu}}^{k-1} \abs{\Deriv^k u}
 + \frac{\norm{u}_{\lebesgue^\infty\brk{\Omega, \Rset^\nu}}^{k}}{\rho^k}  - 2\kappa t^k}_+\brk{x}}^{1/k} + t
 \eqpunct{,}
\end{split}
\end{equation}
where \(\kappa \defeq 1/\brk{2 \Cr{cst_aech1Aishoh2ukiesol1aqu0}^k}\).
If the condition \eqref{eq_IeZohY8amulieB2eemiiB9ph} holds, we deduce from \eqref{eq_Rah3job0yaulah7sieKiu9ei} that
\begin{equation}
\label{eq_CuunuuyiSierebo4choo1uof}
  \brk{\abs{\Deriv u \brk{x}} - t}_+
  \le \C \brk[\big]{\mathfrak{M}  \brk[\big]{\norm{u}_{\lebesgue^\infty\brk{\Omega, \Rset^\nu}}^{k-1} \abs{\Deriv^k u}
   - \kappa t^k}_+\brk{x}}^{1/k}
   \eqpunct{.}
\end{equation}
Integrating \eqref{eq_CuunuuyiSierebo4choo1uof} over \(x \in \Omega\), we have
\begin{equation}
\label{eq_ieSohk8osh8eesixii5fooGh}
\int_{K}  \brk{\abs{\Deriv u} - t}_+^{kp}
 \le \C \int_{K} \brk[\big]{\mathfrak{M}  \brk[\big]{\norm{u}_{\lebesgue^\infty\brk{\Omega, \Rset^\nu}}^{k-1} \abs{\Deriv^k u}
  - \kappa t^k}_+}^{p}
  \eqpunct.
\end{equation}
Since \(p > 1\), thanks to the Hardy-Littlewood maximal function theorem (see for example \citelist{\cite{Stein_1970}*{Thm.\ I.1}\cite{Duoandikoetxea_2001}*{Thm.\ 2.16}}), we have
\begin{equation}
\label{eq_ikohkeimah3quahzee8gooKe}
\begin{split}
 \int_{K} \brk[\big]{\mathfrak{M}  \brk[\big]{\norm{u}_{\lebesgue^\infty\brk{\Omega, \Rset^\nu}}^{k-1} \abs{\Deriv^k u}
 - \kappa t^k}_+}^{p}
 &\le \C \int_{\Omega} \brk[\big]{\norm{u}_{\lebesgue^\infty\brk{\Omega, \Rset^\nu}}^{k-1} \abs{\Deriv^k u}
  - \kappa t^k}_+^{p}\eqpunct,
\end{split}
\end{equation}
and \eqref{eq_aeVoochae8deraegh7iekees} follows from \eqref{eq_ieSohk8osh8eesixii5fooGh} and \eqref{eq_ikohkeimah3quahzee8gooKe}.
\resetconstant
\end{proof}

As an immediate consequence of \cref{proposition_Wkp_trunc_W1kp_trunc}, we get the following relationship between \(\sobolev^{k, p}\)-equi-integrable and \(\sobolev^{1, kp}\)-equi-integrable sequences.

\begin{proposition}
\label{proposition_Wkp_equi_W1kp_equi}
Let \(\manifold{M}\) be a Riemannian manifold, let \(k \in \Nset \setminus \set{0, 1}\) and \(p \in \intvr{1}{\infty}\).
If \(\brk{u_n}_{n \in \Nset}\) is a sequence in \(\sobolev^{k, p} \brk{\manifold{M}, \Rset^{\nu}}\),
if
\[
 \lim_{t \to \infty} \sup_{n \in \Nset} \int_{\manifold{M}} \brk{\abs{\Deriv ^k u_n} - t}_+^p = 0
\]
and if
\[
 \sup_{n \in \Nset} \, \norm{u_n}_{\lebesgue^\infty \brk{\manifold{M}, \Rset^\nu}}< \infty\eqpunct{,}
\]
then
\[
 \lim_{t \to \infty} \sup_{n \in \Nset} \int_{\manifold{M}} \brk{\abs{\Deriv u_n} - t}^{kp}_+ = 0 \eqpunct{.}
\]
\end{proposition}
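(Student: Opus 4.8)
The plan is to deduce the statement from the local estimate of \cref{proposition_Wkp_trunc_W1kp_trunc} by a finite covering argument. First I would fix a finite smooth atlas \(\brk{\phi_i \colon B_2 \to \manifold{M}}_{i \in \set{1, \dotsc, N}}\) of \(\manifold{M}\), where \(B_r \subseteq \Rset^m\) is the open ball of radius \(r\) about the origin and \(\manifold{M} = \bigcup_{i} \phi_i\brk{B_1}\), and I would set \(v_n^i \defeq u_n \compose \phi_i \in \sobolev^{k, p}\brk{B_2, \Rset^\nu}\). Since the atlas is finite and fixed, the comparability of the Riemannian derivative \(\abs{\Deriv u_n}\) on \(\phi_i\brk{B_1}\) with the Euclidean derivative \(\abs{\Deriv v_n^i}\) on \(B_1\), together with the boundedness of the chart Jacobians, reduces the claim to proving that each sequence \(\brk{\Deriv v_n^i}_{n \in \Nset}\) is equi-integrable in \(\lebesgue^{kp}\brk{B_1}\); moreover one has \(\sup_{n, i}\norm{v_n^i}_{\lebesgue^\infty\brk{B_2}} \le M \defeq \sup_{n \in \Nset} \norm{u_n}_{\lebesgue^\infty\brk{\manifold{M}}} < \infty\).

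Next I would fix \(\rho \defeq \tfrac{1}{2}\), so that \(\overline{B_1} + B_\rho \subseteq B_2\), and apply \cref{proposition_Wkp_trunc_W1kp_trunc} to \(u = v_n^i\) with \(\Omega = B_2\) and \(K = \overline{B_1}\). Its size hypothesis \eqref{eq_IeZohY8amulieB2eemiiB9ph} then reads \(M^k \le \kappa t^k \rho^k\), which holds for all \(n\) and \(i\) as soon as \(t\) exceeds a threshold depending only on \(M\), \(\kappa\) and \(\rho\); for such \(t\), \eqref{eq_aeVoochae8deraegh7iekees} yields
\[
 \int_{B_1} \brk{\abs{\Deriv v_n^i} - t}_+^{kp}
 \le C \int_{B_2} \brk[\big]{M^{k - 1}\abs{\Deriv^k v_n^i} - \kappa t^k}_+^{p}
 = C M^{(k - 1)p}\int_{B_2}\brk[\Big]{\abs{\Deriv^k v_n^i} - \tfrac{\kappa t^k}{M^{k - 1}}}_+^{p}
 \eqpunct.
\]
Hence everything reduces to showing that the right-hand side tends to \(0\) uniformly in \(n\) as \(t \to \infty\), since this gives \(\lim_{t \to \infty}\sup_{n \in \Nset}\int_{B_1}\brk{\abs{\Deriv v_n^i} - t}_+^{kp} = 0\).

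The crux — and the step I expect to require the most care — is therefore the equi-integrability of \(\brk{\Deriv^k v_n^i}_{n \in \Nset}\) in \(\lebesgue^p\brk{B_2}\). By the Fa\`{a} di Bruno formula and the boundedness of the derivatives of the fixed charts \(\phi_i\), one has \(\abs{\Deriv^k v_n^i} \le C_i \sum_{l = 1}^{k}\abs{\Deriv^l u_n}\compose \phi_i\), where \(\Deriv^l u_n\) denotes the \(l\)-th covariant derivative; thus \(\Deriv^k v_n^i\) is controlled not by \(\Deriv^k u_n\) alone but by all the lower-order covariant derivatives, which is precisely where the passage from \(\manifold{M}\) to Euclidean charts ceases to be transparent. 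The top-order contribution \(\brk{\abs{\Deriv^k u_n}\compose \phi_i}_n\) is equi-integrable in \(\lebesgue^p\brk{B_2}\) by the assumption and the boundedness of the Jacobians. For \(1 \le l < k\), the Gagliardo--Nirenberg interpolation inequality on \(\manifold{M}\) bounds \(\sup_n\norm{\Deriv^l u_n}_{\lebesgue^{kp/l}\brk{\manifold{M}}}\) in terms of \(M\) and \(\sup_n\norm{\Deriv^k u_n}_{\lebesgue^p\brk{\manifold{M}}}\), and since \(kp/l > p\) this uniform higher-integrability bound forces \(\brk{\abs{\Deriv^l u_n}^p}_n\) to be equi-integrable on \(\manifold{M}\) (for instance by the de la Vall\'ee-Poussin criterion), hence so is \(\brk{\abs{\Deriv^l u_n}\compose \phi_i}_n\) on \(B_2\). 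A finite sum of \(\lebesgue^p\)-equi-integrable sequences being \(\lebesgue^p\)-equi-integrable, the conclusion follows, and summing over \(i\) completes the argument. Alternatively, one can bypass the chart bookkeeping entirely by observing that the proof of \cref{proposition_Wkp_trunc_W1kp_trunc} — which relies only on the Sobolev representation formula and the Hardy--Littlewood maximal inequality, both available on a compact Riemannian manifold — transfers verbatim to \(\manifold{M}\), so that only \(\Deriv^k u_n\) ever enters the estimate.
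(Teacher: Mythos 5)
Your proposal is correct. The paper gives no explicit proof here, presenting the proposition as an \emph{immediate} consequence of \cref{proposition_Wkp_trunc_W1kp_trunc}; the intended reading is exactly your closing observation: the proof of that proposition rests only on the Sobolev integral representation and the Hardy--Littlewood maximal inequality, both available on a closed Riemannian manifold, so one can take \(\Omega = K = \manifold{M}\), fix \(\rho\) to be (say) the injectivity radius, and conclude in one line, with only the \(k\)-th covariant derivative ever appearing.

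Your main chart-based argument is a genuinely different and more elaborate route, and it is forced to confront an issue the intrinsic route sidesteps entirely: passing to charts, the \(k\)-th Euclidean derivative of \(u_n \compose \phi_i\) picks up, through the Christoffel symbols, contributions from all lower-order covariant derivatives \(\Deriv^l u_n\) with \(1 \le l < k\), whose \(\lebesgue^p\)-equi-integrability is not among the hypotheses. Your remedy — Gagliardo--Nirenberg interpolation between \(\lebesgue^\infty\) and \(\Deriv^k u_n \in \lebesgue^p\) to get a uniform bound on \(\Deriv^l u_n\) in \(\lebesgue^{kp/l}\), with \(kp/l > p\), and then de la Vall\'ee-Poussin — does close the gap; you should just note explicitly that the uniform \(\lebesgue^p\) bound on \(\Deriv^k u_n\) needed to apply GN follows from the equi-integrability assumption at \(t = 1\) together with the finite volume of \(\manifold{M}\). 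Both routes therefore work (under the implicit compactness of \(\manifold{M}\), which both the finite atlas and the GN inequality require). The intrinsic one is cleaner and is what ``immediate'' was meant to convey; the chart route costs you an extra interpolation inequality that belongs to the same circle of ideas used to prove \cref{proposition_Wkp_trunc_W1kp_trunc} in the first place, so it is heavier without buying additional generality.
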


Thanks to \cref{proposition_Wkp_equi_W1kp_equi}, we are now in position to prove \cref{theorem_Wkp_equiintegrable_strong} when \(p > 1\).

\begin{proof}[Proof of \cref{theorem_Wkp_equiintegrable_strong} for \(p > 1\)]
By \cref{proposition_Wkp_equi_W1kp_equi} and by \cref{theorem_W1p_equiintegrable_strong}, we have
\begin{equation}
\label{eq_heo0Asaev2quuc9eiFoh5Oot}
\soboleh^{k, p}_{\mathrm{Ei}} \brk{\manifold{M}, \manifold{N}}
  \subseteq \soboleh^{1, kp}_{\mathrm{Ei}} \brk{\manifold{M}, \manifold{N}}
  \cap \sobolev^{k, p} \brk{\manifold{M}, \manifold{N}}
  = \soboleh^{1, kp}_{\mathrm{St}} \brk{\manifold{M}, \manifold{N}}
  \cap \sobolev^{k, p} \brk{\manifold{M}, \manifold{N}}
  \eqpunct{.}
\end{equation}
By the characterisation of the strong closure of higher-order Sobolev spaces \cite{Bousquet_Ponce_VanSchaftingen_2501_18149}, we have
\begin{equation}
\label{eq_KieliPoh8Aen2ebeibae3fee}
 \soboleh^{1, kp}_{\mathrm{St}} \brk{\manifold{M}, \manifold{N}}
  \cap \sobolev^{k, p} \brk{\manifold{M}, \manifold{N}}
  =  \soboleh^{k, p}_{\mathrm{St}} \brk{\manifold{M}, \manifold{N}}\eqpunct{.}
\end{equation}
The conclusion \eqref{eq_saah5uCie3xee0fu1eofo1ue} then follows from \eqref{eq_heo0Asaev2quuc9eiFoh5Oot} and \eqref{eq_KieliPoh8Aen2ebeibae3fee} on the one hand and \eqref{eq_yu7Shei1saingoojeic0chie} on the other hand.
\end{proof}

\subsection{The case of \texorpdfstring{\(p=1\)}{p=1}}
When \(p = 1\), the proof above through the Hardy-Littlewood function fails, as there is no reason for \eqref{eq_ikohkeimah3quahzee8gooKe} to hold then.

Because of the embedding \(\sobolev^{k, 1}\brk{\Rset^k, \Rset^\nu} \subseteq \continuous \brk{\Rset^k, \Rset^\nu}\), we get in fact a direct argument that has the advantage of bypassing mappings of vanishing mean oscillation.

The crucial ingredient is the following truncated version of the corresponding Sobolev embedding theorem.

\begin{proposition}
\label{proposition_truncated_Wk1}
For every \(k \in \Nset\setminus \set{0}\) there exists a constant \(C \in \intvo{0}{\infty}\) such that if the set \(\Omega \subseteq \Rset^k\) is open and convex and if \(B_\rho \brk{a}\subseteq \Omega\) with \(a \in \Omega\) and \(\rho \in \intvo{0}{\infty}\),
then for almost every \(x \in \Omega\), we have
\begin{equation}
\label{eq_aph5xuij1Re0Tho3ael2eiw4}
 \abs{u \brk{x}}
 \le C \brk[\bigg]{1 + \frac{\abs{x - a}}{\rho}}^{k - 1}
 \brk[\bigg]{\eta + \int_{\Omega} \brk[\Big]{\abs{\Deriv^k u} - \frac{\eta}{\rho^k}}_+  + \fint_{B_\rho \brk{a}} \brk{\abs{u} - \eta}_+}
 \eqpunct{.}
\end{equation}
\end{proposition}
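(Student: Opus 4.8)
The plan is to derive \eqref{eq_aph5xuij1Re0Tho3ael2eiw4} from the Sobolev integral representation formula, specialised to the borderline case in which the order of differentiation coincides with the dimension of the domain, followed by the same truncation device used throughout this section; this avoids the Hardy-Littlewood maximal function, whose role in the argument for \(p > 1\) has no analogue here.

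First I would note that, since \(K + B_\rho \subseteq \Omega\), for almost every \(x \in K\) one has \(B_\rho \brk{x} \subseteq \Omega\), and apply the Sobolev representation formula \cite{Sobolev_1938}*{\S 7} (see also \citelist{\cite{Adams_Fournier_2003}*{Lem.\ 4.15}\cite{Mazya_2011}*{Thm.\ 1.1.10/1}}) on the ball \(B_\rho \brk{x}\): this yields, for almost every such \(x\),
\[
 \abs{u \brk{x}}
 \le \frac{C}{\rho^k} \int_{B_\rho \brk{x}} \abs{u}
 + C \int_{B_\rho \brk{x}} \frac{\abs{\Deriv^k u \brk{y}}}{\abs{x - y}^{k - k}} \dif y
 = \frac{C}{\rho^k} \int_{B_\rho \brk{x}} \abs{u}
 + C \int_{B_\rho \brk{x}} \abs{\Deriv^k u}
 \eqpunct{,}
\]
where the Riesz-type kernel in the remainder term of the representation reduces to the constant \(1\) precisely because \(\dim \Omega = k\): the first term bounds the polynomial part of the representation by an average of \(u\) over \(B_\rho \brk{x}\), while the second bounds the remainder. (For \(u \in \sobolev^{k, 1}_{\mathrm{loc}} \brk{\Omega, \Rset^\nu}\), as in \cref{proposition_pointwise_interpolation}, the formula is understood as an identity valid almost everywhere, obtained in the usual way by smooth approximation of \(u\) on balls.)

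Next I would truncate the two terms. From the elementary pointwise bounds \(\abs{u} \le \eta + \brk{\abs{u} - \eta}_+\) and \(\abs{\Deriv^k u} \le \tfrac{\eta}{\rho^k} + \brk{\abs{\Deriv^k u} - \tfrac{\eta}{\rho^k}}_+\), together with \(\mathcal{L}^k \brk{B_\rho \brk{x}} = C \rho^k\), it follows that
\[
 \frac{1}{\rho^k} \int_{B_\rho \brk{x}} \abs{u}
 \le C \eta + \frac{1}{\rho^k} \int_{\Omega} \brk{\abs{u} - \eta}_+
 \qquad \text{and} \qquad
 \int_{B_\rho \brk{x}} \abs{\Deriv^k u}
 \le C \eta + \int_{\Omega} \brk[\Big]{\abs{\Deriv^k u} - \frac{\eta}{\rho^k}}_+
 \eqpunct{.}
\]
Inserting these two estimates into the displayed representation and renaming the constant yields \eqref{eq_aph5xuij1Re0Tho3ael2eiw4}.

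I do not anticipate a genuine obstacle. The points requiring care are the precise form of the Sobolev representation formula and its validity for merely \(\sobolev^{k, 1}_{\mathrm{loc}}\) maps, and the bookkeeping showing that in the borderline dimension the order-\(k\) derivative term genuinely carries an integrable \(L^1\) kernel; this is exactly what makes the \(L^\infty\) bound available and, through the truncation above, produces the truncated Sobolev embedding that here replaces the maximal-function interpolation used when \(p > 1\).
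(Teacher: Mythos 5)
Your proposal is correct and follows essentially the same route as the paper: both apply the Sobolev representation formula on the ball \(B_\rho\brk{x}\) in the critical case \(\dim \Omega = k\), where the kernel of the remainder term is constant so that \(\abs{u\brk{x}} \le C\brk{\rho^{-k}\int_{B_\rho\brk{x}}\abs{u} + \int_{B_\rho\brk{x}}\abs{\Deriv^k u}}\), and then truncate each term to pass to the stated inequality. The paper compresses the truncation into a single step, whereas you spell out the two elementary pointwise bounds explicitly, but the argument is the same.
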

\begin{proof}
We have, by the Sobolev representation formula (see for example \citelist{\cite{Mazya_2011}*{Thm.\ 1.1.10/1}\cite{Mazya_Poborchi_1997}*{Thm.\ 1.5.1/1}}), for almost every \(x \in \Omega\)
\[
\begin{split}
 \abs{u \brk{x}}
 &\le \C  \brk[\bigg]{1 + \frac{\abs{x - a}}{\rho}}^{k - 1} \brk[\bigg]{\int_{\Omega} \abs{\Deriv^k u} + \fint_{B_\rho \brk{a}}\abs{u}}\\
 &\le \C \brk[\bigg]{1 + \frac{\abs{x - a}}{\rho}}^{k - 1}
 \brk[\bigg]{ \eta + \int_{\Omega} \brk[\Big]{\abs{\Deriv^k u} - \frac{\eta}{\rho^k}}_+  + \fint_{B_\rho \brk{a}} \brk{\abs{u} - \eta}_+}
 \eqpunct{,}
\end{split}
\]
which proves \eqref{eq_aph5xuij1Re0Tho3ael2eiw4}.
\resetconstant
\end{proof}
As a consequence of \cref{proposition_truncated_Wk1}, we obtain the following counterpart of \cref{proposition_W1p_good_grid_measure}.

\begin{proposition}
\label{proposition_Wk1_good_grid_measure}
For every compact Riemannian manifold \(\manifold{N}\) and every \(k \in \Nset \setminus \set{0}\), there exists a constant \(\eta > 0\) such that if \(\Omega \subseteq \Rset^m\) is open and bounded with \(m \ge p\) and if \(u_0, u_1 \in \sobolev^{k, 1}\brk{\Omega, \manifold{N}}\), then
\begin{equation}
\label{eq_phai8iif5aeteekoh6Meix6O}
 \eta \mathcal{L}^m \brk{
 E_{\varepsilon}}
 \le
 \int_{\Omega}
  \brk[\Big]{\abs{\Deriv^k u_0} - \frac{\eta}{\rho}}_+
 + \brk[\Big]{\abs{\Deriv^k u_1} - \frac{\eta}{\rho}}_+
 + \frac{\brk{\abs{u_0 - u_1} - \eta}_+}{\rho^k}\eqpunct{,}
\end{equation}
where
\[
 E_{\varepsilon}
 \defeq
 \set[\big]{\xi \in \Qset^m_\varepsilon \st
 u_0 \restr{K^{\Omega, k}_{\varepsilon, \xi}} \text{ and } u_1 \restr{K^{\Omega, k}_{\varepsilon, \xi}}
 \text{ are not homotopic in \(\continuous \brk{K^{\Omega, k}_{\varepsilon, \xi}, \manifold{N}}\)}}\eqpunct.
\]
\end{proposition}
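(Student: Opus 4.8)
The plan is to reproduce the scheme of the proof of \cref{proposition_W1p_good_grid_measure} step by step, with a single new ingredient: a local homotopy criterion for \(\sobolev^{k, 1}\)-mappings on a regular \(p\)-dimensional cubical complex --- here necessarily \(p = k\), this being the borderline dimension for the embedding \(\sobolev^{k, 1} \hookrightarrow \continuous\). That criterion is extracted directly from the truncated Sobolev embedding \cref{proposition_truncated_Wk1}, and it is precisely at this step that the argument dispenses with the mappings of vanishing mean oscillation used for general \(p\).

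First I would prove the local criterion: there is \(\eta > 0\), depending only on \(\manifold{N}\) and \(k\), such that if \(\Sigma\) is a regular \(p\)-dimensional cubical complex of edge-length \(\varepsilon\), if \(0 < \rho < \varepsilon\) and if \(v_0, v_1 \in \sobolev^{k, 1}\brk{\Sigma, \manifold{N}}\) satisfy
\[
 \int_{\Sigma}
 \brk[\Big]{\abs{\Deriv^k v_0} - \frac{\eta}{\rho^k}}_+
 + \brk[\Big]{\abs{\Deriv^k v_1} - \frac{\eta}{\rho^k}}_+
 + \frac{\brk{\abs{v_1 - v_0} - \eta}_+}{\rho^k} \le \eta
 \eqpunct{,}
\]
then \(v_0\) and \(v_1\) are homotopic in \(\continuous\brk{\Sigma, \manifold{N}}\). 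Since \(\Sigma\) is regular, at scales below \(\varepsilon\) its metric balls are uniformly quasiconvex and doubling (``injectivity radius of order \(\varepsilon\)'', cf.\ \cite{White_1988}), so \cref{proposition_truncated_Wk1} holds on \(\Sigma\) with constants depending only on \(m\) and \(k\). Applying it to \(v_0 - v_1 \in \sobolev^{k, 1}\brk{\Sigma, \Rset^\nu}\), and bounding \(\abs{\Deriv^k \brk{v_0 - v_1}} \le \abs{\Deriv^k v_0} + \abs{\Deriv^k v_1}\) together with the elementary inequality \(\brk{a + b - 2t}_+ \le \brk{a - t}_+ + \brk{b - t}_+\), one obtains \(\norm{v_0 - v_1}_{\lebesgue^\infty\brk{\Sigma}} \le C\eta\) (the truncation thresholds being adjusted by fixed numerical factors, absorbed into \(\eta\) and \(C\)). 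Because \(\dim \Sigma = k\), the maps \(v_0, v_1\) have continuous \(\manifold{N}\)-valued representatives; for \(\eta\) small the segment from \(v_0\brk{x}\) to \(v_1\brk{x}\) stays, for every \(x\), inside a tubular neighbourhood of \(\manifold{N}\) on which the nearest-point retraction \(\Pi_{\manifold{N}}\) is Lipschitz-continuous, so
\[
 H\brk{t, x} \defeq \Pi_{\manifold{N}}\brk{\brk{1 - t}\, v_0\brk{x} + t\, v_1\brk{x}}
\]
is a continuous homotopy from \(v_0\) to \(v_1\) in \(\continuous\brk{\Sigma, \manifold{N}}\).

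With this value of \(\eta\), the remainder is the Chebyshev--Fubini bookkeeping of \cref{proposition_W1p_good_grid_measure}. For almost every \(\xi\) the restriction \(u_j \restr{K^{\Omega, p}_{\varepsilon, \xi}}\) lies in \(\sobolev^{k, 1}\) with derivatives given by slicing (the usual Fubini argument for Sobolev functions), and \(K^{\Omega, p}_{\varepsilon, \xi}\) is itself a regular \(p\)-dimensional cubical complex of edge-length \(\varepsilon\); hence the contrapositive of the local criterion gives, for \(\xi \in E_{\varepsilon}\),
\[
 \eta < \int_{K^{\Omega, p}_{\varepsilon, \xi}}
 \brk[\Big]{\abs{\Deriv^k u_0} - \frac{\eta}{\rho^k}}_+
 + \brk[\Big]{\abs{\Deriv^k u_1} - \frac{\eta}{\rho^k}}_+
 + \frac{\brk{\abs{u_1 - u_0} - \eta}_+}{\rho^k}
 \eqpunct{.}
\]
Integrating this inequality over \(\xi\) and using that the \(\binom{m}{p}\) families of parallel coordinate \(p\)-faces each sweep out \(\Omega\) exactly once then yields \eqref{eq_phai8iif5aeteekoh6Meix6O}.

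The main obstacle will be making rigorous the transfer of \cref{proposition_truncated_Wk1} to a regular \(p\)-dimensional cubical complex \(\Sigma\) with constants \emph{independent of \(\varepsilon\)} and of the combinatorics: when \(p < m\) the complex \(\Sigma\) is not a manifold --- several \(p\)-faces may branch along a common \((p - 1)\)-face --- but its regularity (homogeneity and local strong connectedness) makes it a uniformly quasiconvex, doubling metric space supporting scale-invariant Poincaré and Sobolev inequalities, whence the truncated embedding carries over with a constant depending only on \(m\) and \(k\); this is in the spirit of the geometric facts on cubical skeletons recorded in \cite{White_1988} and already invoked in \cref{proposition_W1p_good_grid_measure}. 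One must also verify, as there, that the faces omitted near \(\partial \Omega\) do not disturb this local analysis and that \(u_j \restr{K^{\Omega, p}_{\varepsilon, \xi}} \in \sobolev^{k, 1}\) for almost every \(\xi\); both are routine.
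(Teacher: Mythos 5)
Your proposal is correct and follows essentially the same route as the paper's proof: both rest on applying the truncated $\sobolev^{k,1}\hookrightarrow \continuous$ embedding of \cref{proposition_truncated_Wk1} to the difference $u_0-u_1$ on generic $k$-dimensional cubical skeletons $K^{\Omega,p}_{\varepsilon,\xi}$, so that a small truncated integral forces a small $\lebesgue^\infty$-distance, hence a homotopy by linear interpolation and nearest-point retraction, after which a Chebyshev--Fubini integration over $\xi$ yields \eqref{eq_phai8iif5aeteekoh6Meix6O}. The only cosmetic difference is packaging: the paper decomposes $E_\varepsilon\subseteq F_\varepsilon\cup H_\varepsilon$ (with $F_\varepsilon$ the $\lebesgue^\infty$-separated fibres and $H_\varepsilon$ the measure-zero set of non-Sobolev restrictions), whereas you phrase the same content as a self-contained ``local homotopy criterion'' and invoke it for almost every $\xi$ — equivalent bookkeeping.
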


The set \(K^{\Omega, k}_{\varepsilon, \xi}\) is the \(k\)-dimensional skeleton defined in \eqref{eq_roh8ahga7eishieSh3jie7oo}.
Strictly speaking, in the definition of \(E_\varepsilon\), we
require the maps \(\smash{u_0 \restr{K^{\Omega, k}_{\varepsilon, \xi}}}\) and \(\smash{u_1 \restr{K^{\Omega, k}_{\varepsilon, \xi}}}\) not to be equal almost everywhere to some continuous mappings that are homotopic.

\begin{proof}[Proof of \cref{proposition_Wk1_good_grid_measure}]
We first define the set
\begin{multline*}
 H_\varepsilon \defeq
\bigl\{\xi \in \Qset^m_\varepsilon \st \text{for every \(j \in \set{0, 1}\) and \(\sigma \in  \mathscr{Q}^{m, k}\) such that \(\varepsilon \sigma + \xi \subseteq \Omega\),}\\
 u_j \restr{\varepsilon \sigma + \xi} \in \sobolev^{k, 1}\brk{\varepsilon \sigma + \xi, \manifold{N}} \text{ and }
 \tr_{\partial \brk{\varepsilon \sigma + \xi}} u_j\restr{\varepsilon \sigma + \xi}
 = u_j\restr{\partial \brk{\varepsilon \sigma + \xi}}
 \bigr\}\eqpunct.
\end{multline*}
(Here and in the sequel \(\partial \brk{\varepsilon \sigma + \xi}\) has to be understood as the boundary relative to the \(k\)-dimensional affine plane spanned by \(\varepsilon \sigma + \xi\).)

We have, by Fubini’s theorem,
\begin{equation}
\label{eq_phae5eePh6Fae9leethohw7s}
  \mathcal{L}^m \brk{\Qset^m_\varepsilon \setminus H_{\varepsilon}}= 0\eqpunct.
\end{equation}
Moreover, by the Sobolev embedding theorem, for every \(\xi \in H_\varepsilon\), we have
\(\smash{u_0 \restr{K^{\Omega, k}_{\varepsilon, \xi}}}\) and \(\smash{u_1 \restr{K^{\Omega, k}_{\varepsilon, \xi}}} \in \continuous \brk{K^{\Omega, k}_{\varepsilon, \xi}, \manifold{N}}\) (in the sense they coincide almost everywhere with such a map).

Assuming that \(\manifold{N}\) is isometrically embedded into \(\Rset^\nu\), since \(\manifold{N}\) is compact, there exists some \(\delta \in \intvo{0}{\infty}\) such that if \(X\) is a topological space, if \(v_0\) and \(v_1 \in \continuous \brk{X, \manifold{N}}\) and if \(\norm{v_0 - v_1}_{\lebesgue^\infty \brk{X}}\le \delta\), then the maps \(v_0\) and \(v_1\) are homotopic.
If we define the set
\[
 F_{\varepsilon}
 \defeq
 \set[\big]{\xi \in H_\varepsilon \st
 \norm{u_0 \restr{K^{\Omega, k}_{\varepsilon, \xi}} -u_1 \restr{K^{\Omega, k}_{\varepsilon, \xi}}}_{\lebesgue^\infty \brk{K^{\Omega, p}_{\varepsilon, \xi},\Rset^\nu}}\ge \delta}\eqpunct,
\]
we have then
\[
 E_{\varepsilon} \cap H_\varepsilon \subseteq F_{\varepsilon}
\]
and thus
\begin{equation}
\label{eq_Gahgich5keekuu6ooquohche}
 E_{\varepsilon} \subseteq F_\varepsilon \cup \brk{\Qset^m_\varepsilon \setminus H_\varepsilon} \eqpunct.
\end{equation}

On the other hand, if \(\sigma \in  \mathscr{Q}^{m, k}\) and \(\varepsilon \sigma + \xi \subseteq \Omega\), if \(u_0\restr{\varepsilon \sigma + \xi} \) and \(u_1 \restr{\varepsilon \sigma + \xi} \in \sobolev^{k, 1}\brk{\varepsilon \sigma + \xi, \manifold{N}}\), then by \cref{proposition_truncated_Wk1}, we have
\begin{equation}
\label{eq_ieCh8la0aeHohsooco9sa5ka}
\begin{split}
  &\norm{u_0\restr{\varepsilon \sigma + \xi} - u_1 \restr{\varepsilon \sigma + \xi}}_{\lebesgue^\infty \brk{\varepsilon \sigma + \xi, \Rset^\nu}}\\
  &\qquad \le \Cl{cst_Eim9aingahhathokai7eith9} \brk[\bigg]{\eta + \int_{\varepsilon \sigma + \xi}  \brk[\Big]{\abs{\Deriv^k u_0} - \frac{\eta}{\rho}}_+
 + \brk[\Big]{\abs{\Deriv^k u_1} - \frac{\eta}{\rho}}_+
 + \frac{\brk{\abs{u_0 - u_1} - \eta}_+}{\rho^k}}\eqpunct.
\end{split}
\end{equation}
By the definition of \(K^{\Omega, k}_{\varepsilon, \xi}\) in \eqref{eq_roh8ahga7eishieSh3jie7oo}, it follows from \eqref{eq_ieCh8la0aeHohsooco9sa5ka} that if \(\Cr{cst_Eim9aingahhathokai7eith9}\eta \le \delta/2\),
\begin{equation}
\label{eq_IequaengoeKaing8At7eesho}
\begin{split}
\mathcal{L}^m \brk{F_\varepsilon}
  &\le \Cl{cst_she6vi9oNgae7giez2Coidee}
 \int_{\Qset^m_\varepsilon} \int_{K^{\Omega, k}_{\varepsilon, \xi}}
  \brk[\Big]{\abs{\Deriv^k u_0} - \frac{\eta}{\rho}}_+
 + \brk[\Big]{\abs{\Deriv^k u_1} - \frac{\eta}{\rho}}_+
 + \frac{\brk{\abs{u_0 - u_1} - \eta}_+}{\rho^k}\\
 &\le \Cr{cst_she6vi9oNgae7giez2Coidee}\tbinom{m}{k}
 \int_{\Omega}
  \brk[\Big]{\abs{\Deriv^k u_0} - \frac{\eta}{\rho}}_+
 + \brk[\Big]{\abs{\Deriv^k u_1} - \frac{\eta}{\rho}}_+
 + \frac{\brk{\abs{u_0 - u_1} - \eta}_+}{\rho^k}\eqpunct.
 \end{split}
\end{equation}
We deduce \eqref{eq_phai8iif5aeteekoh6Meix6O} from \eqref{eq_phae5eePh6Fae9leethohw7s} and \eqref{eq_IequaengoeKaing8At7eesho} in view of \eqref{eq_Gahgich5keekuu6ooquohche}.
\end{proof}

As a consequence of \cref{proposition_Wk1_good_grid_measure}, we get the counterpart of \cref{proposition_Wk1_equiintegrable_strong_local} for \(\soboleh^{k, 1}_{\mathrm{Ei}} \brk{\Omega, \manifold{N}}\).

\begin{proposition}
\label{proposition_Wk1_equiintegrable_strong_local}
If the set \(\Omega \subset \Rset^m\) is open and bounded with a smooth boundary, if \(\manifold{N}\) is a compact Riemannian manifold and if \(k \in \set{1, \dotsc, \dim \manifold{M} - 1}\), then
\begin{equation}
\label{eq_deiyoh1wo1gahZaich1FuH3a}
  \soboleh^{k, 1}_{\mathrm{St}} \brk{\Omega, \manifold{N}}
 =
 \soboleh^{k, 1}_{\mathrm{Ei}} \brk{\Omega, \manifold{N}}
 \eqpunct{.}
\end{equation}
\end{proposition}

\begin{proof}
This is a consequence of \cref{proposition_Wk1_good_grid_measure} and the of the strong approximability criterion for Sobolev mappings \cite{Bousquet_Ponce_VanSchaftingen_2501_18149}.
\end{proof}

We are now in position to prove \cref{theorem_Wkp_equiintegrable_strong} for \(p=1\).

\begin{proof}[Proof of \cref{theorem_Wkp_equiintegrable_strong} for \(p = 1\)]
Let \(u \in \soboleh^{k, 1}_{\mathrm{Ei}} \brk{\manifold{M}, \manifold{N}}\).
From \cref{proposition_Wk1_equiintegrable_strong_local} and a local chart argument, we have bounded open sets \(\Omega_1, \dotsc, \Omega_r \subseteq \manifold{M}\) with a smooth boundary such that \(\manifold{M} \subseteq \bigcup_{i = 1}^r \Omega_i\) and \(\smash{u\restr{\Omega_i} \in \soboleh^{k, 1}_{\mathrm{Ei}} \brk{\Omega_i, \manifold{N}}} = \smash{\soboleh^{k, 1}_{\mathrm{St}} \brk{\Omega_i, \manifold{N}}}\).

By the Gagliardo-Nirenberg inequality \citelist{\cite{Nirenberg_1959}\cite{Gagliardo_1958}\cite{Gagliardo_1959}} (see also \citelist{\cite{Leoni_2017}*{Thm.\ 12.85}\cite{Brezis_Mironescu_2018}}, we have
\(u \in \smash{\soboleh^{1, k}_{\mathrm{Bd}} \brk{\manifold{M}, \manifold{N}}}\).
Assuming that \(\manifold{M}\) is isometrically embedded in \(\Rset^\mu\) and that \(\smash{\Pi_{\manifold{M}}\colon \manifold{M} + B_\delta\to \manifold{M}}\) is a smooth retraction,
for almost every \(\xi \in B_\delta\), we have \(u \compose \brk{\Pi_\manifold{M} + \xi}\restr{\manifold{M}^{k - 1}} = V_\xi \restr{\manifold{M}^{k-1}}\) almost everywhere in \(\smash{\manifold{M}^{k-1}}\), with \(V_\xi \in \continuous \brk{\manifold{M}, \manifold{N}}\)  \cite{Hang_Lin_2003_II}*{Thm.\ 7.1}. Isobe’s criterion of global strong approximability from local strong approximability \cite{Isobe_2005}*{Thm.\ 1.6} implies that \(u \in \smash{\soboleh^{1, k}_{\mathrm{St}}} \brk{\manifold{M}, \manifold{N}}\).
By the global strong approximability criterion for Sobolev mappings \cite{Bousquet_Ponce_VanSchaftingen_2501_18149} we conclude that \(u \in \smash{\soboleh^{k, 1}_{\mathrm{St}} \brk{\manifold{M}, \manifold{N}}}\).
\end{proof}

\section{Fractional Sobolev spaces}

In fractional spaces, equi-integrable and strong convergence coincide so that the generalization of \cref{theorem_W1p_equiintegrable_strong} is trivial, and the question of equi-integrable approximation reduces to the strong approximation which has been studied for \(0 < s < 1\) by Brezis and Mironescu \cite{Brezis_Mironescu_2015} and for \(s > 1\) by Detaille \cite{Detaille_2305_12589}.

\begin{proposition}
\label{proposition_equivalence_fractional}
Let \(\manifold{M}\) be a compact Riemannian manifold,
let \( k \in \Nset\), \(s \in \intvo{0}{1}\) and \(\brk{u_n}_{n \in \Nset}\) be a sequence in \(\sobolev^{k + s, p}\brk{\manifold{M}, \Rset^\nu}\).
The following are equivalent:
\begin{enumerate}[label=(\roman*)]
 \item
 \label{it_opeihi2eicas1ooy4Eduthau}
 \(u_n \to u\) in \(\sobolev^{k + s, p}\brk{\manifold{M}, \Rset^\nu}\),
 \item
 \label{it_Queovohxei5Ohxoowothahr3}
 \(u_n \to u\) in \(\lebesgue^p \brk{\manifold{M}, \Rset^\nu}\) and
 \[
  \lim_{\rho \to 0} \sup_{n \in \Nset}
  \smashoperator{\iint_{\substack{\brk{x, y}\in \manifold{M}\times \manifold{M}\\ d \brk{x, y} <\rho}}}
  \frac{\abs{\Deriv^k u_n \brk{x} - \Deriv^k u_n \brk{y}}^p}{\abs{x - y}^{m + s p}} \dif x \dif y = 0 \eqpunct{,}
 \]
  \item
  \label{it_nieVaN5iep0Jah7gahnaiTei}
 \(u_n \to u\) in \(\lebesgue^p \brk{\manifold{M}, \Rset^\nu}\) and
 \[
  \lim_{\rho \to 0} \limsup_{n \to \infty}
  \smashoperator{\iint_{\substack{\brk{x, y}\in \manifold{M}\times \manifold{M}\\ d \brk{x, y} <\rho}}}
  \frac{\abs{\Deriv^k u_n \brk{x} - \Deriv^k u_n \brk{y}}^p}{\abs{x - y}^{m + s p}} \dif x \dif y = 0 \eqpunct{.}
 \]
\end{enumerate}
\end{proposition}

The first ingredient of the proof of \cref{proposition_equivalence_fractional} is the following localization estimate of the difference in norm.

\begin{proposition}
\label{proposition_Wsp_strong_equi_integrable}
For every \(m \in \Nset \setminus \set{0}\), \(s \in \intvo{0}{1}\) and \(p \in \intvr{1}{\infty}\), there exists a constant \(C \in \intvo{0}{\infty}\) such that if the functions \(u_0\) and \(u_1 \colon \Omega \to \Rset^\nu\),  are measurable, one has
\begin{multline}
\label{eq_uojeiJees7aiju5ahng2gaoh}
 \smashoperator[r]{\iint_{\Omega \times \Omega}}
\frac{\abs{\brk{u_0 - u_1} \brk{x} - \brk{u_0 - u_1}\brk{y}}^p}{\abs{x - y}^{m + sp}} \dif x \dif y\\
\le C \brk[\bigg]{
\smashoperator[r]{\iint_{\substack{\brk{x, y}\in \Omega \times \Omega\\ 
\abs{x - y}\le \rho}}}
\frac{\abs{u_0 \brk{x} - u_0 \brk{y}}^p + \abs{u_1 \brk{x} - u_1 \brk{y}}^p}{\abs{x - y}^{m + sp}} \dif x \dif y
+ 
\int_{\Omega} \frac{\abs{u_0 - u_1}^p}{\rho^{sp}}
}\eqpunct.
\end{multline}
\end{proposition}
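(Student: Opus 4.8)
The plan is to abbreviate $v \defeq u_0 - u_1$ and to split the integration domain $\Omega \times \Omega$ according to whether $\abs{x - y} \le \rho$ or $\abs{x - y} > \rho$, estimating each of the two resulting pieces by one of the terms on the right-hand side of \eqref{eq_uojeiJees7aiju5ahng2gaoh}.

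On the near-diagonal region where $\abs{x - y} \le \rho$, the elementary convexity bound $\abs{v \brk{x} - v \brk{y}}^p \le 2^{p - 1}\brk{\abs{u_0 \brk{x} - u_0 \brk{y}}^p + \abs{u_1 \brk{x} - u_1 \brk{y}}^p}$ gives, after integration, precisely the first term on the right-hand side, with constant $2^{p - 1}$. On the complementary region where $\abs{x - y} > \rho$, one uses instead $\abs{v \brk{x} - v \brk{y}}^p \le 2^{p - 1}\brk{\abs{v \brk{x}}^p + \abs{v \brk{y}}^p}$; by the symmetry $\brk{x, y} \mapsto \brk{y, x}$ it suffices to control the contribution of $\abs{v \brk{x}}^p$. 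Freezing $x \in \Omega$ and enlarging the $y$-domain from $\Omega \setminus B_\rho \brk{x}$ to $\Rset^m \setminus B_\rho \brk{x}$, one computes in polar coordinates
\[
 \int_{\Rset^m \setminus B_\rho \brk{x}} \frac{\dif y}{\abs{x - y}^{m + sp}}
 = \abs{\Sset^{m - 1}} \int_{\rho}^{\infty} \frac{\dif r}{r^{1 + sp}}
 = \frac{\abs{\Sset^{m - 1}}}{sp}\, \rho^{-sp}\eqpunct{,}
\]
which is finite precisely because $sp > 0$; integrating this bound over $x \in \Omega$, and treating the $\abs{v \brk{y}}^p$ contribution symmetrically by Tonelli's theorem, controls the far part by $C\rho^{-sp}\int_{\Omega}\abs{v}^p$, that is, the second term on the right-hand side of \eqref{eq_uojeiJees7aiju5ahng2gaoh}. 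Adding the two estimates yields the claim with a constant $C$ depending only on $m$, $s$ and $p$.

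The argument is entirely elementary; the only point requiring a little care is the far region, where one must \emph{not} bound $\abs{x - y}^{-\brk{m + sp}}$ crudely by $\rho^{-\brk{m + sp}}$, since this would produce the wrong power of $\rho$ together with a spurious factor $\mathcal{L}^m \brk{\Omega}$, but rather keep the full tail $\abs{x - y}^{-\brk{m + sp}}$ and integrate it, exploiting that the exponent $m + sp$ exceeds the dimension $m$ by exactly $sp$.
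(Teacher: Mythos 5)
Your argument is correct and is essentially the paper's own proof: the same near/far split according to whether $\abs{x - y} \le \rho$ or $\abs{x - y} > \rho$, convexity on the near part, the elementary bound $\abs{v\brk{x} - v\brk{y}}^p \le 2^{p-1}\brk{\abs{v\brk{x}}^p + \abs{v\brk{y}}^p}$ plus the tail integral $\int_{\abs{x-y}>\rho}\abs{x-y}^{-(m+sp)}\dif y \lesssim \rho^{-sp}$ on the far part. No material differences.
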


Although not stated as such, the proof of \cref{proposition_Wsp_strong_equi_integrable} is a fundamental mechanism of the continuity on fractional Sobolev spaces of the composition with Lipschitz-continuous mappings statement of  \cite{Bourgain_Brezis_Mironescu_2004}*{claim (5.43)}
(see also \cite{Bousquet_Ponce_VanSchaftingen_2014}*{Lem.\ 2.5}).

\begin{proof}[Proof of \cref{proposition_Wsp_strong_equi_integrable}]
We have on the one hand 
\begin{multline}
\label{eq_aighu9lohZoxais0oej5Yi0r}
 \smashoperator[r]{\iint_{\substack{\brk{x, y}\in \Omega \times \Omega\\ 
\abs{x - y}\le \rho}}}
\frac{\abs{\brk{u_0 - u_1} \brk{x} - \brk{u_0 - u_1}\brk{y}}^p}{\abs{x - y}^{m + sp}} \dif x \dif y \\
\le 2^{p - 1}
\smashoperator{
\iint_{\substack{\brk{x, y}\in \Omega \times \Omega\\ 
\abs{x - y}\le \rho}}}
\frac{\abs{u_0 \brk{x} - u_0 \brk{y}}^p + \abs{u_1 \brk{x} - u_1 \brk{y}}^p}{\abs{x - y}^{m + sp}} \dif x \dif y
\eqpunct,
\end{multline}
and on the other hand 
\begin{equation}
\label{eq_eo7ooc5ahngoodie0aijieFi}
\begin{split}
 &\smashoperator[r]{
  \iint_{\substack{\brk{x, y}\in \Omega \times \Omega\\ 
\abs{x - y}> \rho}}}
\frac{\abs{\brk{u_0 - u_1} \brk{x} - \brk{u_0 - u_1}\brk{y}}^p}{\abs{x - y}^{m + sp}} \dif x \dif y \\
&\qquad 
\le 2^{p - 1}
\smashoperator{\iint_{\substack{\brk{x, y}\in \Omega \times \Omega\\ 
\abs{x - y}> \rho}}}
\frac{\abs{u_0 \brk{x} - u_1 \brk{x}}^p + \abs{u_1 \brk{y} - u_1 \brk{y}}^p}{\abs{x - y}^{m + sp}} \dif x \dif y\\
&\qquad 
= 2^{p}
\smashoperator{\iint_{\substack{\brk{x, y}\in \Omega \times \Omega\\ 
\abs{x - y}> \rho}}}
\frac{\abs{u_0 \brk{x} - u_1 \brk{x}}^p}{\abs{x - y}^{m + sp}} \dif x \dif y
\eqpunct.
\end{split}
\end{equation}
We compute then 
\begin{equation}
\label{eq_gaqu8Quafaih1ub7iekaphoh}
\smashoperator{\iint_{\substack{\brk{x, y}\in \Omega \times \Omega\\ 
\abs{x - y}> \rho}}}
\frac{\abs{u_0 \brk{x} - u_1 \brk{x}}^p}{\abs{x - y}^{m + sp}} \dif x \dif y
\le \frac{\C}{\rho^{sp}} \int_{\Omega}\abs{u_0 - u_1}^p\eqpunct.
\end{equation}
The conclusion \eqref{eq_uojeiJees7aiju5ahng2gaoh} then follows from \eqref{eq_aighu9lohZoxais0oej5Yi0r}, \eqref{eq_eo7ooc5ahngoodie0aijieFi} and \eqref{eq_gaqu8Quafaih1ub7iekaphoh}.
\resetconstant
\end{proof}

The other ingredient is the following global control of fractional energy by its localisation at small scales.

\begin{proposition}
\label{proposition_fractional_rescaling}
If \(\Omega \subseteq \Rset^m\) is convex, \(s \in \intvo{0}{1}\), \(p \in \intvr{1}{\infty}\), \(u \colon \Omega \to \Rset^\nu\) is Borel-measurable, \(\rho \in \intvo{0}{\infty}\) and \(\ell \in \Nset \setminus \set{0}\), then
\begin{equation}
\label{eq_Veequ5ra2jeomeehiezoe1ie}
 \smashoperator[r]{\iint_{\substack{\brk{x, y} \in \manifold{M} \times \manifold{M}\\ \abs{x - y} \le \ell \rho}}} \frac{\abs{u \brk{x} - u \brk{y}}^p}{\abs{x - y}^{m + sp}} \dif x \dif y
 \le \ell^{\brk{1 - s}p}
 \smashoperator{\iint_{\substack{\brk{x, y} \in \manifold{M} \times \manifold{M}\\ \abs{x - y} \le \rho}}} \frac{\abs{u \brk{x} - u \brk{y}}^p}{\abs{x - y}^{m + sp}} \dif x \dif y\eqpunct.
\end{equation}
\end{proposition}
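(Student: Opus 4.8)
The plan is to prove \eqref{eq_Veequ5ra2jeomeehiezoe1ie} by a chaining (telescoping) argument along line segments, which is exactly where the convexity of \(\Omega\) is used.

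\textbf{Step 1: passing to increment coordinates.} First I would rewrite both sides in terms of the increment \(h = y - x\): substituting \(y = x + h\) and applying Tonelli's theorem (legitimate for any Borel-measurable \(u\) since the integrand is nonnegative), the left-hand side of \eqref{eq_Veequ5ra2jeomeehiezoe1ie} equals
\[
 \int_{\abs{h} \le \ell \rho} \frac{1}{\abs{h}^{m + sp}} \brk[\bigg]{\int_{\Omega \cap \brk{\Omega - h}} \abs{u \brk{x} - u \brk{x + h}}^p \dif x} \dif h \eqpunct{,}
\]
and similarly for the right-hand side with \(\abs{h} \le \rho\) in place of \(\abs{h} \le \ell \rho\).

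\textbf{Step 2: the chaining inequality.} For \(x \in \Omega\) and \(h\) with \(x + h \in \Omega\), convexity of \(\Omega\) guarantees that the points \(z_i \defeq x + \tfrac{i}{\ell} h\) lie in \(\Omega\) for \(i = 0, \dotsc, \ell\). Writing \(u \brk{x} - u \brk{x+h}\) as the telescoping sum \(\sum_{i=1}^\ell \brk{u \brk{z_{i-1}} - u \brk{z_i}}\) and using convexity of \(t \mapsto t^p\) (discrete Jensen, or the power-mean inequality), I get
\[
 \abs{u \brk{x} - u \brk{x + h}}^p \le \ell^{p - 1} \sum_{i = 1}^\ell \abs{u \brk{z_{i-1}} - u \brk{z_i}}^p \eqpunct{.}
\]

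\textbf{Step 3: integrating and translating.} Integrating the chaining inequality over \(x \in \Omega \cap \brk{\Omega - h}\) and, in the \(i\)-th term, performing the translation \(x \mapsto x' = x + \tfrac{i-1}{\ell} h\) (Jacobian \(1\)): since \(z_{i-1}, z_i \in \Omega\), the image of the domain of integration is contained in \(\Omega \cap \brk{\Omega - h/\ell}\), so enlarging to that set (the integrand being nonnegative) each of the \(\ell\) terms is bounded by \(\int_{\Omega \cap \brk{\Omega - h/\ell}} \abs{u \brk{x'} - u \brk{x' + h/\ell}}^p \dif x'\). Summing yields
\[
 \int_{\Omega \cap \brk{\Omega - h}} \abs{u \brk{x} - u \brk{x + h}}^p \dif x \le \ell^p \int_{\Omega \cap \brk{\Omega - h/\ell}} \abs{u \brk{x'} - u \brk{x' + h/\ell}}^p \dif x' \eqpunct{.}
\]
Inserting this into the formula of Step 1 and changing variables \(h = \ell h'\) (so \(\dif h = \ell^m \dif h'\), \(\abs{h}^{m + sp} = \ell^{m + sp} \abs{h'}^{m + sp}\), and \(\abs{h} \le \ell \rho\) becomes \(\abs{h'} \le \rho\)), the accumulated constant is \(\ell^p \cdot \ell^m \cdot \ell^{-\brk{m + sp}} = \ell^{\brk{1 - s} p}\), and the remaining integral is precisely the increment-coordinate form of the right-hand side of \eqref{eq_Veequ5ra2jeomeehiezoe1ie}.

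The argument is essentially bookkeeping, and the only genuine point is in Step 3: ensuring that the translated domains of integration are contained in \(\Omega \cap \brk{\Omega - h/\ell}\), which is the sole place where convexity of \(\Omega\) enters. No integrability hypothesis on \(u\) is needed since all integrands are nonnegative and Tonelli's theorem applies throughout.
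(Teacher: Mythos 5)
Your proof is correct and essentially the same as the paper's: both decompose \(u(x)-u(y)\) as a telescoping sum along the segment joining \(x\) and \(y\), apply the power-mean inequality to pick up \(\ell^{p-1}\), and then rescale by a change of variables to produce the factor \(\ell^{-sp}\), using convexity of \(\Omega\) to keep the intermediate points inside the domain. Your version carries out the bookkeeping in increment coordinates \((x,h)\) rather than \((x,y)\), which makes the domain-containment step a bit more explicit, but it is the same argument.
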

\begin{proof}
We have, by the triangle inequality for every \(x, y \in \Omega\),
\[
 \abs{u \brk{x} - u \brk{y}}
 \le
 \sum_{j = 1}^{k} \abs{u \brk{\tfrac{k - j + 1}{k} x + \tfrac{j - 1}{k} y} - u \brk{\tfrac{k-j}{k} x + \tfrac{j}{k} y}}
 \eqpunct{,}
\]
and thus by convexity and a change of variable
\[
\begin{split}
  &\smashoperator[r]{\iint_{\substack{\brk{x, y} \in \manifold{M} \times \manifold{M}\\ \abs{x - y} \le \ell \rho}}} \frac{\abs{u \brk{x} - u \brk{y}}^p}{\abs{x - y}^{m + sp}} \dif x \dif y\\
  &\qquad \le \ell^{p - 1} \sum_{j = 1}^k
  \smashoperator[r]{\iint_{\substack{\brk{x, y} \in \manifold{M} \times \manifold{M}\\ \abs{x - y} \le \ell \rho}}} \frac{ \abs{u \brk{\tfrac{k - j + 1}{k} x + \tfrac{j - 1}{k} y} - u \brk{\tfrac{k-j}{k} x + \tfrac{j}{k} y}}^p}{\abs{x - y}^{m + sp}} \dif x \dif y\\
  &\qquad \le \ell^{p - 1}\sum_{j = 1}^{\ell} \ell^{- sp}
 \smashoperator{\iint_{\substack{\brk{x, y} \in \manifold{M} \times \manifold{M}\\ \abs{x - y} \le \rho}}} \frac{\abs{u \brk{x} - u \brk{y}}^p}{\abs{x - y}^{m + sp}} \dif x \dif y\eqpunct,
\end{split}
\]
and \eqref{eq_Veequ5ra2jeomeehiezoe1ie} follows.
\end{proof}

Thanks to \cref{proposition_fractional_rescaling} and \cref{proposition_Wsp_strong_equi_integrable}, we can now prove \cref{proposition_equivalence_fractional}.

\begin{proof}[Proof of \cref{proposition_equivalence_fractional}]
If \ref{it_opeihi2eicas1ooy4Eduthau}, then \ref{it_Queovohxei5Ohxoowothahr3} follows from a classical equi-integrability argument. 
The condition \ref{it_nieVaN5iep0Jah7gahnaiTei} is clearly weaker than \ref{it_Queovohxei5Ohxoowothahr3}.

Conversely, if \ref{it_nieVaN5iep0Jah7gahnaiTei} holds, then \cref{proposition_fractional_rescaling} implies that the sequence \(\brk{u_n}_{n \in \Nset}\) is bounded in \(\sobolev^{k + s, p}\brk{\manifold{M}, \Rset^\nu}\) and the classical fractional Rellich-Kondrashov compactness theorems show that \(u \in \sobolev^{k, p}\brk{\manifold{M}, \Rset^\nu}\) and that \(\brk{u_n}_{n \in \Nset}\) converges to \(u\) strongly in \(\sobolev^{k, p}\brk{\manifold{M}, \Rset^\nu}\).
It then follows from \eqref{proposition_Wsp_strong_equi_integrable} and Fatou’s lemma that
\begin{equation}
\label{eq_uquaimiej6Eitie2wae8Xuis}
\begin{split} 
&
\limsup_{n \to \infty} \smashoperator{\iint_{\manifold{M} \times \manifold{M}}}
\frac{\abs{\brk{\Deriv^k u_n - \Deriv^k u} \brk{x} - \brk{u_0 - u_1}\brk{y}}^p}{d \brk{x , y}^{m + sp}} \dif x \dif y\\
&\qquad \le \Cl{cst_eiPhaaCohth8gi5vaiC1ni6O} \bigg(\limsup_{n \to \infty} 
\smashoperator{\iint_{\substack{\brk{x, y}\in \manifold{M} \times \manifold{M}\\ 
d\brk{x, y}\le \rho}}}
\frac{\abs{\Deriv^k u_n \brk{x} - \Deriv^k u_n \brk{y}}^p+\abs{\Deriv^k u \brk{x} - \Deriv^k u \brk{y}}^p}{d\brk{x, y}^{m + sp}} \dif x \dif y\\[-1em]
&\hspace{20em}
+ \limsup_{n \to \infty}
\int_{\manifold{M}} \frac{\abs{\Deriv^k u_n - \Deriv^k u}^p}{\rho^{sp}}\biggr)
\\ 
&\qquad \le 2\Cr{cst_eiPhaaCohth8gi5vaiC1ni6O}  \limsup_{n \to \infty} 
\smashoperator{\iint_{\substack{\brk{x, y}\in \manifold{M} \times \manifold{M}\\ 
d\brk{x, y}\le \rho}}}
\frac{\abs{\Deriv^k u_n \brk{x} - \Deriv^k u_n \brk{y}}^p}{d\brk{x, y}^{m + sp}} \dif x \dif y
\eqpunct{,}
\end{split}
\end{equation}
Letting \(\rho \to 0\) in \eqref{eq_uquaimiej6Eitie2wae8Xuis}, we get the conclusion \ref{it_opeihi2eicas1ooy4Eduthau}.
\end{proof}

\section{Equi-integrability and cohomology}
\label{section_jacobian}

Given differential form \(\omega \in \smooth^\infty \brk{\manifold{N}, \bigwedge^\ell \tangent \manifold{N}}\) and a smooth mapping \(u \in \smooth^\infty \brk{\manifold{M}, \manifold{N}}\), one has
\begin{equation}
\label{eq_ishohPae5Oso6caewohxaeji}
 \extdiff \brk{u^* \omega} = u^* \brk{\extdiff \omega}\eqpunct{,}
\end{equation}
where \(d\) is the exterior differential and \(u^*\omega\) is the pull-back of \(\omega\) by \(u\).
If \(\omega\) is closed, that is, if \(\extdiff \omega = 0\), one has from \eqref{eq_ishohPae5Oso6caewohxaeji}
\begin{equation}
\label{eq_aumai3ahfei2ooPaeweiPoh1}
 \extdiff \brk{u^* \omega} = 0
 \eqpunct{.}
\end{equation}
If \(p \ge \ell\) and if \(u \in \sobolev^{1, p}\brk{\manifold{M}, \manifold{N}}\), then the pull-back \(u^* \omega\) is still defined in \(\lebesgue^{p/\ell} \brk{\manifold{M}, \bigwedge^\ell \tangent \manifold{M}}\) and satisfies
\[
\abs{u^* \omega} \le C \abs{\Deriv u}^\ell
\eqpunct{,}
\]
but, if \(p < \ell + 1\) or, equivalently, \(\ell = \floor{p}\), the pull-back \(u^* \brk{\extdiff \omega}\) is not well-defined as a function and so \eqref{eq_ishohPae5Oso6caewohxaeji} does not make sense anymore.
One can however still interpret the condition \eqref{eq_aumai3ahfei2ooPaeweiPoh1} in \emph{the sense of distributions,} by requiring that for every compactly supported test form \(\varphi \in \smooth^\infty_c \brk{\manifold{M}, \bigwedge^{m - \ell - 1} \tangent \manifold{M}}\),
\begin{equation}
\label{eq_eineixae2phi9ea8ie4Uthie}
  \int_{\manifold{M}} u^* \omega \wedge \extdiff \varphi = 0
  \eqpunct.
\end{equation}
(A partition of unity argument shows that the support of \(\varphi\) can be restricted to small balls so that the domain manifold \(\manifold{M}\) need not be orientable.)
On the other hand, if \(u \in \sobolev^{1, \ell}\brk{\manifold{M}, \manifold{N}}\), if \(\theta \in \smooth^\infty \brk{\manifold{N}, \bigwedge^{\ell - 1} \tangent \manifold{N}}\) and if \(\varphi \in \smooth^\infty_c \brk{\manifold{M}, \bigwedge^{m - \ell - 1} \tangent \manifold{M}}\),
\begin{equation}
\label{eq_pheunei9AiG2xiereemahqu9}
  \int_{\manifold{M}} u^* \brk{\extdiff \theta} \wedge \extdiff \varphi = 0
  \eqpunct.
\end{equation}
(Extending \(\theta\) to \(\Rset^\nu\), \eqref{eq_pheunei9AiG2xiereemahqu9} can be proved for \(u \in \sobolev^{1, p}\brk{\manifold{M}, \Rset^\nu}\) by an approximation argument.)
In view of \eqref{eq_pheunei9AiG2xiereemahqu9}, the condition \eqref{eq_eineixae2phi9ea8ie4Uthie} has to be checked for every \(\omega \in \smooth^\infty \brk{\manifold{N}, \bigwedge^{\ell} T \brk{\manifold{N}}}\) satisfying \(\extdiff \omega = 0\) up to \(\extdiff \theta\) for \(\omega \in \smooth^\infty \brk{\manifold{N}, \bigwedge^{\ell - 1} \tangent \manifold{N}}\), or in other words on the \emph{de Rham cohomology} \(\soboleh^\ell \brk{\manifold{N}}\) defined as the quotient of the closed form in \(\smooth^\infty \brk{\manifold{N}, \bigwedge^{\ell} T \brk{\manifold{N}}}\) by the exact forms.
In the particular case where \(\manifold{N} = \Sset^\ell\), the condition \eqref{eq_eineixae2phi9ea8ie4Uthie} holds for every closed form \(\omega \in \smooth^\infty \brk{\manifold{N}, \bigwedge^{\ell} T \brk{\manifold{N}}}\) if and only if it holds for the canonical volume form.

The condition \eqref{eq_pheunei9AiG2xiereemahqu9} can be used to describe the strong closure of smooth maps \(\soboleh^{1, p}_{\mathrm{St}} \brk{\manifold{M}, \manifold{N}}\) for some target manifolds.

\begin{theorem}
\label{theorem_cohomology}
Let \(\manifold{M}\) and \(\manifold{N}\) be compact Riemannian manifolds and let \(p \in \intvr{1}{\infty}\).
Assume that every \(f \in \smooth^\infty \brk{\Sset^{\floor{p}}, \manifold{N}}\) satisfying for every \(\omega \in \smooth^\infty \brk{\manifold{N}, \bigwedge^{\floor{p}} \tangent \manifold{N}}\) such that \(\extdiff \omega = 0\) the condition
\[
 \int_{\Sset^p} f^* \omega = 0
 \eqpunct,
\]
is homotopic to a constant and that every \(f \in \continuous \brk{\manifold{M}^{\floor{p}}, \manifold{N}}\) can be written as \(F\restr{\manifold{M}^{\floor{p - 1}}} = f \restr{\manifold{M}^{\floor{p - 1}}}\) for some \(F \in \continuous \brk{\manifold{M}^{\floor{p}}, \manifold{N}}\).
Then, one has
\[
 u\in
 \soboleh^{1, p}_{\mathrm{St}} \brk{\manifold{M}, \manifold{N}}
\]
if and only if \(u \in \sobolev^{1, p}\brk{\manifold{M}, \manifold{N}}\) and for every \(\omega \in \smooth^\infty \brk{\manifold{N}, \bigwedge^{\floor{p}} \tangent \manifold{N}}\) such that
\( \extdiff \omega = 0\) on \(\manifold{N}\) and every \(\varphi \in  \smooth^\infty \brk{\manifold{M}, \bigwedge^{m - \floor{p} - 1} \tangent \manifold{M}}\) with orientable support,
one has
\[
 \int_{\manifold{M}} u^* \omega \wedge \extdiff \varphi = 0
 \eqpunct{.}
\]
\end{theorem}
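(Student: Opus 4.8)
The plan is to establish the two implications of the theorem separately, the substantial one being sufficiency. For the \emph{necessity}, first I would reduce to smooth maps: picking \(u_n \in \smooth^\infty \brk{\manifold{M}, \manifold{N}}\) with \(u_n \to u\) strongly in \(\sobolev^{1, p}\), the smooth form \(u_n^* \omega\) is closed by \eqref{eq_aumai3ahfei2ooPaeweiPoh1}, so decomposing \(\varphi\) by a partition of unity subordinate to orientable coordinate balls contained in its support and applying Stokes' theorem on each of them yields \(\int_{\manifold{M}} u_n^* \omega \wedge \extdiff \varphi = \pm \int_{\manifold{M}} \extdiff \brk{u_n^* \omega} \wedge \varphi = 0\) (this is where the orientability of the support of \(\varphi\) is used, since \(\manifold{M}\) itself is not assumed orientable). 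One then passes to the limit: since \(\omega\) is smooth on the compact manifold \(\manifold{N}\) and \(\floor{p} \le p\), the pull-back \(v \mapsto v^* \omega\) — in local coordinates a sum of products of \(\omega_I \compose v\) with \(\floor{p}\)-minors of \(\Deriv v\) — is continuous from \(\sobolev^{1, p} \brk{\manifold{M}, \manifold{N}}\) into \(\lebesgue^{p/\floor{p}}\brk{\manifold{M}, \bigwedge^{\floor{p}} T\manifold{M}}\), so \(u_n^* \omega \to u^* \omega\) in \(\lebesgue^{p/\floor{p}}\); as \(\extdiff \varphi\) is bounded, we get \(\int_{\manifold{M}} u^* \omega \wedge \extdiff \varphi = 0\).

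For the \emph{sufficiency}, assume \(u \in \sobolev^{1, p}\brk{\manifold{M}, \manifold{N}}\) satisfies the stated vanishing. I would invoke the characterisation of \(\soboleh^{1, p}_{\mathrm{St}}\) through generic skeletons already used in the proof of \cref{proposition_W1p_equiintegrable_strong_local} \citelist{\cite{Bethuel_1991}*{Proof of Theorem 3}\cite{Hang_Lin_2003_II}*{Remark 6.1}\cite{Isobe_2005}*{Theorem 1.6}\cite{Bousquet_Ponce_VanSchaftingen_2014}}: in view of the hypothesis on \(\manifold{M}\), it suffices to prove that for almost every translate \(\xi\) of a fine cubical grid in a chart, the trace of \(u\) on the \(\floor{p}\)-dimensional skeleton \(K^{\Omega, \floor{p}}_{\varepsilon, \xi}\) extends continuously over the \(\brk{\floor{p} + 1}\)-dimensional skeleton. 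The central point is to localise the distributional identity \(\extdiff \brk{u^* \omega} = 0\) to the \(\brk{\floor{p} + 1}\)-dimensional faces: by a Fubini argument in the spirit of \cref{proposition_W1p_good_grid_measure}, for almost every \(\xi\) the trace of \(u\) on \(K^{\Omega, \floor{p}}_{\varepsilon, \xi}\) lies in \(\sobolev^{1, p}\) of the skeleton, and testing the distributional identity against mollifications of the characteristic function of a face \(Q\) gives \(\int_{\partial Q} \brk[\big]{u \restr{\partial Q}}^* \omega = 0\) for every closed \(\omega\).

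Since \(\partial Q\) is homeomorphic to \(\Sset^{\floor{p}}\) and \(p \ge \floor{p} = \dim \partial Q\), the map \(u \restr{\partial Q}\) is of vanishing mean oscillation, hence homotopic in \(\VMO \brk{\partial Q, \manifold{N}}\) to a smooth map \(f_Q\); the cohomological periods being invariant under \(\VMO\)-homotopy, they still vanish for \(f_Q\), and the hypothesis on \(\manifold{N}\) then shows that \(f_Q\), hence \(u \restr{\partial Q}\), is homotopic to a constant. Therefore \(u\) restricted to \(K^{\Omega, \floor{p}}_{\varepsilon, \xi}\) extends continuously over \(K^{\Omega, \floor{p} + 1}_{\varepsilon, \xi}\); combining this partial extension with the hypothesis on \(\manifold{M}\) through the cohomological criterion of Bethuel, Coron, Demengel and Hélein \citelist{\cite{Bethuel_Coron_Demengel_Helein_1991}\cite{Hang_Lin_2003_II}*{Remark 6.1}} and patching local into global through Isobe's criterion \cite{Isobe_2005}*{Theorem 1.6}, one obtains \(u \in \soboleh^{1, p}_{\mathrm{St}} \brk{\manifold{M}, \manifold{N}}\). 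This generalises Hang's argument for \(\manifold{N} = \Sset^1\) \cite{Hang_2002}*{First proof of Example 2.1}.

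The hard part, which I would expect to absorb most of the work, is the double step in the sufficiency: localising the distributional closedness of \(u^* \omega\) onto the faces of a generic grid, and then promoting the vanishing of the cohomological periods of the merely \(\sobolev^{1, \floor{p}}\)-regular maps \(u \restr{\partial Q}\) to genuine null-homotopy. The former requires the Fubini slicing to be compatible with the trace theory for the pull-back \(u^* \omega\); the latter requires approximating \(u \restr{\partial Q}\) by smooth maps while preserving the (locally constant, in the relevant cases integer-valued) periods and invoking the \(\VMO\) homotopy theory of Brezis and Nirenberg \cite{Brezis_Nirenberg_1995}, the cohomological detection hypothesis on \(\manifold{N}\) being a statement about smooth maps only.
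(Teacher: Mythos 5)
The paper does not prove \cref{theorem_cohomology}: immediately after the statement it attributes the result to Bethuel \cite{Bethuel_1990}, Demengel \cite{Demengel_1990}, Bethuel--Coron--Demengel--H\'elein \cite{Bethuel_Coron_Demengel_Helein_1991}, and, for the global condition, Hang--Lin \cite{Hang_Lin_2003_II}. There is therefore no in-paper proof to compare against; what you wrote is a reconstruction of the arguments in those references, and the paper's own contribution here is only \cref{theorem_equi_integrable_cohomotology}, for which a proof \emph{is} given.

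As a reconstruction, your outline is sound, and the necessity direction is essentially complete: Stokes' theorem on orientable pieces gives the identity for smooth $u_n$, and the continuity of $v \mapsto v^*\omega$ from $\sobolev^{1,p}\brk{\manifold{M},\manifold{N}}$ into $\lebesgue^{p/\floor{p}}$ lets it pass to the strong limit. The sufficiency direction is the substantial one and your sketch captures the right strategy (generic cubical slicing, localising distributional closedness to faces, promoting vanishing periods to null-homotopy via $\VMO$ homotopy theory, patching by the Bethuel/Hang--Lin/Isobe criteria), but two steps are stated more loosely than the argument permits. First, ``testing against mollifications of $\charfun{Q}$'' is not literal: $\charfun{Q}$ has the wrong degree as a test form, and what one actually tests against are mollified Poincar\'e duals of the $(\floor{p}+1)$-faces, after which a coarea/Fubini argument compatible with the $\lebesgue^{p/\floor{p}}$-regularity of $u^*\omega$ relates the bulk pairing to the boundary slice $\int_{\partial Q} \brk{u\restr{\partial Q}}^*\omega$. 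Second, when $p = \floor{p} \in \Nset$ the restriction $u\restr{\partial Q}$ is only $\sobolev^{1,p}$ on a $p$-dimensional sphere, hence $\VMO$ but not continuous; the invariance of the periods under $\VMO$-homotopy is a genuine step of Brezis--Nirenberg type, not an immediate remark. You correctly flag both of these as the hard part, so the proposal reads as a faithful high-level reconstruction rather than a complete proof, which is appropriate given that the paper itself delegates the proof to the literature.
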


The condition on \(f \in \smooth^\infty \brk{\Sset^{\floor{p}}, \manifold{N}}\) in \cref{theorem_cohomology} requires the Hurewicz homomorphism \(\smash{\pi_{\floor{p}} \brk{\manifold{N}}} \to \smash{H_{\floor{p}} \brk{\manifold{N}, \Qset}}\) to be an endomorphism; it is equivalent to having \(\smash{\pi_{p} \brk{\manifold{N}} }\to \smash{H_{\floor{p}} \brk{\manifold{N}, \Zset}}\) injective and \(\smash{\pi_{\floor{p}} \brk{\manifold{N}}}\) torsion-free.
\Cref{theorem_cohomology} is due to Bethuel when \(p = 2\) and \(\manifold{N} = \Sset^2\) \cite{Bethuel_1990}, to Demengel when \(1\le p < 2\) and \(\manifold{N} = \Sset^1\) \cite{Demengel_1990} (see also \cite{Brezis_Mironescu_2021}*{Theorem 10.4}), and to Bethuel, Coron, Demengel and Hélein in general \cite{Bethuel_Coron_Demengel_Helein_1991};
the global condition in the statement of \cref{theorem_cohomology} comes from Hang and Lin's work \cite{Hang_Lin_2003_II}.

\begin{theorem}
\label{theorem_equi_integrable_cohomotology}
Let \(\manifold{M}\) and \(\manifold{N}\) be Riemannian manifolds.
If
\[
 u \in \soboleh^{1, p}_{\mathrm{Ei}} \brk{\manifold{M}, \manifold{N}}
 \eqpunct,
\]
then for every \(\omega \in \smooth^\infty \brk{\manifold{N}, \bigwedge^{\floor{p}} \tangent \manifold{N}}\) such that \(\extdiff \omega = 0\) on \(\manifold{N}\) and for every \(\varphi \in  \smooth^\infty \brk{\manifold{M}, \bigwedge^{m - \floor{p} - 1} \tangent \manifold{M}}\) with orientable support, one has
\begin{equation}
\label{eq_zohzuPaqu6lae1sahquo1Loo}
 \int_{\manifold{M}} u^* \omega \wedge \extdiff \varphi = 0
 \eqpunct{.}
\end{equation}
\end{theorem}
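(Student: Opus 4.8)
The plan is to pass to the limit in an equi-integrable approximating sequence of smooth maps, the only nonelementary input being the weak continuity of the Jacobian minors along such sequences. Set $\ell \defeq \floor{p}$ and, by the definition \eqref{eq_Thaedah9eeyohtheesho4aic} of $\soboleh^{1, p}_{\mathrm{Ei}} \brk{\manifold{M}, \manifold{N}}$, choose $\brk{u_n}_{n \in \Nset}$ in $\smooth^\infty \brk{\manifold{M}, \manifold{N}}$ with $u_n \to u$ in $\lebesgue^p$ and $\lim_{t \to \infty} \sup_{n \in \Nset} \int_{\manifold{M}} \brk{\abs{\Deriv u_n} - t}_+^p = 0$. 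As in the discussion around \eqref{eq_pheunei9AiG2xiereemahqu9}, I would extend $\omega$ to a closed $\ell$-form $\tilde\omega \defeq \Pi_{\manifold{N}}^* \omega$ on a neighbourhood of $\manifold{N}$ in $\Rset^\nu$ (closed since $\extdiff \tilde\omega = \Pi_{\manifold{N}}^* \extdiff \omega = 0$), so that $u_n^* \omega = u_n^* \tilde\omega$ and $u^* \omega = u^* \tilde\omega$ because both $u_n$ and $u$ take their values in $\manifold{N}$.

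First, for each fixed $n$: since $u_n$ is smooth and $\extdiff \tilde\omega = 0$, the pull-back $u_n^* \tilde\omega$ is closed, so $u_n^* \tilde\omega \wedge \varphi$ is a compactly supported form on the orientable open subset of $\manifold{M}$ carrying $\supp \varphi$ whose exterior differential is $\pm\, u_n^* \tilde\omega \wedge \extdiff \varphi$; Stokes's theorem then gives $\int_{\manifold{M}} u_n^* \omega \wedge \extdiff \varphi = 0$. This is the only place where orientability of $\supp \varphi$ intervenes, exactly as the paper notes for \eqref{eq_eineixae2phi9ea8ie4Uthie}. It therefore remains to prove $u_n^* \omega \to u^* \omega$ in the sense of distributions, which suffices to pass to the limit against the smooth form $\extdiff \varphi$.

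Writing $\tilde\omega = \sum_{\abs{I} = \ell} \tilde\omega_I \, \extdiff y^I$ with $\tilde\omega_I$ smooth and bounded near $\manifold{N}$, one has $u_n^* \tilde\omega = \sum_{\abs{I} = \ell} \brk{\tilde\omega_I \compose u_n}\, M_I \brk{\Deriv u_n}$, where $M_I \brk{\Deriv v}$ denotes the $\ell \times \ell$ minor of $\Deriv v$ indexed by $I$. Since $\manifold{N}$ is compact, $\sup_{n \in \Nset} \norm{u_n}_{\lebesgue^\infty} < \infty$, so along a subsequence $u_n \to u$ almost everywhere, whence $\tilde\omega_I \compose u_n \to \tilde\omega_I \compose u$ almost everywhere and boundedly. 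Moreover $\abs{M_I \brk{\Deriv u_n}} \le C \abs{\Deriv u_n}^\ell$, and from the elementary truncation bound $\brk{a^\ell - s}_+ \le 2^\ell \brk{a - s^{1/\ell}/2}_+^\ell$ together with the fact that, for $p \ge \ell$, $\lebesgue^p$-equi-integrability of $\brk{\abs{\Deriv u_n}}_{n \in \Nset}$ forces $\lebesgue^1$-equi-integrability of $\brk{\abs{\Deriv u_n}^\ell}_{n \in \Nset}$, the sequence $\brk{M_I \brk{\Deriv u_n}}_{n \in \Nset}$ is equi-integrable in $\lebesgue^1 \brk{\manifold{M}}$; by the Dunford--Pettis theorem it converges weakly in $\lebesgue^1 \brk{\manifold{M}}$ along a further subsequence.

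The step I expect to be the main obstacle is identifying this weak limit as $M_I \brk{\Deriv u}$, that is, the weak continuity of Jacobians under equi-integrable $\sobolev^{1, p}$ convergence: in the critical case $p = \ell$ this genuinely uses the equi-integrability, since $\lebesgue^p$-boundedness of $\brk{\Deriv u_n}_{n \in \Nset}$ alone permits concentration and fails; it follows by induction on $\ell$ from the divergence structure of the minors, the strong convergence $u_n \to u$ in $\lebesgue^p$, and the absence of concentration, and is the minor-convergence counterpart of Giaquinta, Modica and Souček \cite{Giaquinta_Modica_Soucek_1998}*{\S 3.4.1}, reducing for $\ell = 1$ and $\manifold{N} = \Sset^1$ to Hang's lifting argument \cite{Hang_2002}*{First proof of Example 2.1}. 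Granting this, the product of the boundedly almost-everywhere convergent factors $\tilde\omega_I \compose u_n$ with the equi-integrable, weakly $\lebesgue^1$-convergent factors $M_I \brk{\Deriv u_n}$ converges weakly in $\lebesgue^1 \brk{\manifold{M}}$ to $\brk{\tilde\omega_I \compose u}\, M_I \brk{\Deriv u}$ by a standard product lemma, so $u_n^* \omega \weakto u^* \omega$ weakly in $\lebesgue^1$; pairing with $\extdiff \varphi$ and using the first step yields $\int_{\manifold{M}} u^* \omega \wedge \extdiff \varphi = 0$. Since this value depends neither on the approximating sequence nor on the extracted subsequences, \eqref{eq_zohzuPaqu6lae1sahquo1Loo} follows.
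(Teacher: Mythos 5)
Your plan follows the paper's route almost exactly: extend $\omega$ to a closed form on a neighbourhood of $\manifold{N}$, kill the integral for each smooth $u_n$ by Stokes on an orientable open set containing $\supp\varphi$, decompose the pullback as $\sum_I \brk{\omega_I \compose u_n}\,M_I\brk{\Deriv u_n}$, and pass to the limit factor by factor. The one place where your reasoning goes slightly astray is in the attribution of what the equi-integrability is actually buying. You write that identifying the weak $\lebesgue^1$ limit of the minors $M_I\brk{\Deriv u_n}$ as $M_I\brk{\Deriv u}$ ``genuinely uses the equi-integrability, since $\lebesgue^p$-boundedness of $\brk{\Deriv u_n}_{n\in\Nset}$ alone permits concentration and fails.'' That is not quite right. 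The minors by themselves \emph{do} converge to $M_I\brk{\Deriv u}$ in the sense of distributions under mere $\lebesgue^p$-boundedness of the gradients (with $p\ge\ell$) and $\lebesgue^1$-convergence of $u_n$: this is Reshetnyak's weak continuity of null Lagrangians, which the paper records as Proposition \ref{proposition_jacobian_measure} and simply cites, without equi-integrability. The null-Lagrangian divergence structure makes the concentrating part of the minor cancel. Once you also know the minors are $\lebesgue^1$-equi-integrable (which your truncation bound correctly delivers from the $\lebesgue^p$-equi-integrability of $\abs{\Deriv u_n}$), the Dunford--Pettis weak $\lebesgue^1$ limit is forced to agree with the distributional one, so there is no separate obstacle to surmount. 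What genuinely concentrates in the critical case $p=\ell$ without equi-integrability is not $M_I\brk{\Deriv u_n}$ but the product $\brk{\omega_I\compose u_n}M_I\brk{\Deriv u_n}$ --- the distributional Jacobian bubbling phenomenon --- and it is for that product step that the equi-integrability is decisive; the paper isolates this as Lemma \ref{lemma_distrib_to_L1_weak}, which converts distributional convergence of $f_n$, $\lebesgue^1$-equi-integrability of $f_n$, and bounded $\lebesgue^1$-convergence of $g_n$ into convergence of $\int f_n g_n$. Your ``standard product lemma'' at the end is exactly this, so the proof goes through; just move the credit for the limit identification from the equi-integrability to the null-Lagrangian structure, and let the equi-integrability carry the product step instead.
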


When \(p = 1\) and when the target is the unit circle \(\manifold{N} = \Sset^1\),  \cref{theorem_equi_integrable_cohomotology} is due to Hang \cite{Hang_2002}*{First proof of Example 2.1}.

Under the assumptions on \(p\) of \cref{theorem_cohomology}, any of \cref{theorem_W1p_equiintegrable_strong,theorem_cohomology,theorem_equi_integrable_cohomotology} is a direct consequence of the two other ones.

\Cref{theorem_equi_integrable_cohomotology} will be a consequence on the following equi-integrable continuity result for pullbacks.

\begin{proposition}
\label{proposition_equiintegrable_limit_pullback}
Let \(\Omega \subset \Rset^m\) be an open set, let \(\nu \in \Nset\), let \(p \in \intvr{1}{\infty}\) and let \(\brk{u_n}_{n \in \Nset}\) be a sequence in \(\sobolev^{1, p}\brk{\Omega, \Rset^\nu}\) and let \(u \colon \Omega \to \Rset^\nu\) be measurable.
If \(\ell \le p\), if
\[
 \lim_{n \to \infty} \int_{\Omega}\abs{u_n - u} = 0
\]
and if
\begin{equation}
\label{eq_EiKoevaiPazoo5sit7eiquaz}
 \lim_{t \to \infty}
 \sup_{n \in \Nset}
 \int_{\Omega} \brk{\abs{\Deriv u_n} - t}_+^p = 0
 \eqpunct,
\end{equation}
then for every \(\varphi \in \continuous^\infty \brk{\Omega, \bigwedge^{m - \ell} \Rset^m }\) and every \(\omega \in \continuous \brk{\Rset^\nu, \bigwedge^\ell \Rset^\nu} \cap \lebesgue^\infty \brk{\Rset^\nu, \bigwedge^\ell \Rset^\nu}\),
one has
\begin{equation}
\label{eq_ief1Aef1umai2ahG7zo4ohh5}
\lim_{n \to \infty}
 \int_{\Omega} u_n^* \omega \wedge \varphi
 =
 \int_{\Omega}
 u^* \omega \wedge \varphi\eqpunct{.}
\end{equation}
\end{proposition}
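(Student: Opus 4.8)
The plan is to regard \(u^*\omega\) as the product of a bounded continuous function of \(u\) with the vector \(\operatorname{adj}_\ell \Deriv u\) of all \(\ell\times \ell\) subdeterminants of \(\Deriv u\), and to establish the equi-integrable sequential continuity of those subdeterminants. I may assume without loss of generality that \(\Omega\) is bounded. First I would record that, since \(\brk{\Deriv u_n}_{n \in \Nset}\) is bounded and equi-integrable in \(\lebesgue^p\), it converges weakly to \(\Deriv u\) — weakly in \(\lebesgue^p\) when \(p > 1\) and, by the Dunford--Pettis theorem, weakly in \(\lebesgue^1\) when \(p = 1\) — so that \(u \in \sobolev^{1, p}\brk{\Omega, \Rset^\nu}\), the pullback \(u^*\omega\) lies in \(\lebesgue^1\brk{\Omega}\) and the right-hand side of \eqref{eq_ief1Aef1umai2ahG7zo4ohh5} is finite. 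Since it suffices to prove that every subsequence of \(\brk{u_n}_{n \in \Nset}\) has a further subsequence along which \eqref{eq_ief1Aef1umai2ahG7zo4ohh5} holds, and since \(u_n \to u\) in \(\lebesgue^1\), I may also assume \(u_n \to u\) almost everywhere. Finally, from \(\abs{\Deriv u_n}^p \le 2^{p - 1}\brk{\abs{\Deriv u_n} - t}_+^p + 2^{p-1}t^p\), the equi-integrability hypothesis \eqref{eq_EiKoevaiPazoo5sit7eiquaz}, Hölder's inequality and \(\ell \le p\), the entries of \(\operatorname{adj}_\ell \Deriv u_n\), which are bounded by \(C\abs{\Deriv u_n}^\ell\), form an equi-integrable family in \(\lebesgue^1\brk{\Omega}\); this is the only place where the assumption \(\ell \le p\) enters.

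The heart of the argument is the weak \(\lebesgue^1\) continuity of the \(\ell\)-dimensional subdeterminants: \(\operatorname{adj}_\ell \Deriv u_n \weakto \operatorname{adj}_\ell \Deriv u\) in \(\lebesgue^1\brk{\Omega}\). By the equi-integrability just established and the Dunford--Pettis theorem, after passing to a further subsequence one has \(\operatorname{adj}_\ell \Deriv u_n \weakto \Theta\) in \(\lebesgue^1\brk{\Omega}\) for some \(\Theta\), and the task is to identify \(\Theta\) with \(\operatorname{adj}_\ell \Deriv u\). I would do this by induction on \(\ell\). The case \(\ell = 1\) is the weak convergence \(\Deriv u_n \weakto \Deriv u\) noted above. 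For the inductive step, I would use the Piola (null-Lagrangian) identity, which expresses each \(\ell \times \ell\) minor of \(\Deriv u_n\) as a sum of distributional derivatives \(\sum_\beta \pm \,\partial_{x^{k_\beta}}\brk{u_n^{i_1} M_{n}^\beta}\), where the \(M_n^\beta\) are \(\brk{\ell - 1}\times\brk{\ell - 1}\) minors of \(\Deriv u_n\); this identity, classical for smooth maps, extends to \(\sobolev^{1, p}\) with \(p \ge \ell\) because the exponent inequality \(\ell/p - 1/m < 1\) (a consequence of \(\ell \le p\)) places \(u_n^{i_1} M_n^\beta\) in \(\lebesgue^1_{\mathrm{loc}}\). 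Testing against \(\psi \in \smooth^\infty_c\brk{\Omega}\) and combining the inductive hypothesis \(M_n^\beta \weakto M^\beta\) in \(\lebesgue^1\) with \(u_n^{i_1} \to u^{i_1}\) almost everywhere lets me pass to the limit, and the same identity for \(u\) then forces \(\Theta = \operatorname{adj}_\ell \Deriv u\).

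I expect this identification to be the main obstacle, the technical point being that the factor \(u_n^{i_1}\) is not uniformly bounded. The plan is to truncate \(u_n^{i_1}\) at a level \(R\): the truncated factor is bounded and converges almost everywhere, so its product with the equi-integrable weakly convergent \(M_n^\beta\) passes to the limit by a Vitali-type argument (split \(\Omega\) by Egorov's theorem into a set on which the bounded factor converges uniformly and a set of small measure on which equi-integrability controls \(M_n^\beta\)); the remainder is bounded by \(\int_{\set{\abs{u_n} \ge R}} \abs{u_n}\abs{\Deriv u_n}^{\ell - 1}\), which by Hölder's inequality and the Sobolev embedding (again using \(\ell/p - 1/m < 1\)) is at most \(C \,\mathcal{L}^m\brk{\set{\abs{u_n} \ge R}}^{1 - \ell/p + 1/m}\), hence \(o_R\brk{1}\) uniformly in \(n\) since \(\mathcal{L}^m\brk{\set{\abs{u_n} \ge R}} \le R^{-1}\sup_n \norm{u_n}_{\lebesgue^1} \to 0\). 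When \(p > m\) the maps \(u_n\) are equibounded and the truncation is unnecessary; when \(p = m\) one replaces the Sobolev exponent \(p^* = \infty\) by a large finite Lebesgue exponent.

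With the weak continuity of the minors in hand, the conclusion is routine: writing \(\omega = \sum_I \omega_I \dif y^I\), one has \(\int_\Omega u_n^*\omega \wedge \varphi = \int_\Omega \sum_I \brk{\omega_I \compose u_n}\,\brk{\operatorname{adj}_\ell \Deriv u_n}_I\, \varphi_I\), where the \(\varphi_I\) are the matching bounded smooth coefficients of \(\varphi\). The coefficients \(\brk{\omega_I \compose u_n}\varphi_I\) are uniformly bounded — because \(\omega\) is bounded — and converge almost everywhere to \(\brk{\omega_I \compose u}\varphi_I\) by continuity of \(\omega_I\), while \(\operatorname{adj}_\ell \Deriv u_n \weakto \operatorname{adj}_\ell \Deriv u\) in \(\lebesgue^1\) with the family \(\brk{\operatorname{adj}_\ell \Deriv u_n}_{n \in \Nset}\) equi-integrable; the same Vitali-type argument as above then yields \(\int_\Omega u_n^*\omega \wedge \varphi \to \int_\Omega u^*\omega \wedge \varphi\). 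Since every subsequence of \(\brk{u_n}_{n\in\Nset}\) admits a further subsequence along which this holds, the whole sequence converges, which is \eqref{eq_ief1Aef1umai2ahG7zo4ohh5}. Note that no smoothing of \(\omega\) is needed — continuity and boundedness of \(\omega\) are used directly in this last step — and that in the borderline case \(\ell = p\), where the minors are merely bounded in \(\lebesgue^1\), the equi-integrability hypothesis is indispensable both for extracting the weak limit and, through the truncation estimate, for identifying it.
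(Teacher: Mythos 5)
Your argument is correct, but it reproves from scratch the one ingredient the paper simply cites. Both proofs expand $u_n^*\omega$ into terms $(\omega_I\circ u_n)\,\partial_{i_1}u_n\wedge\dotsb\wedge\partial_{i_\ell}u_n$, use $\ell\le p$ to extract the $\lebesgue^1$-equi-integrability of the $\ell$-minors, and then couple the minors with the uniformly bounded, pointwise convergent factor $\omega_I\circ u_n$. Where they diverge is in how the convergence of the minors is obtained. The paper quotes Reshetnyak's theorem (\cref{proposition_jacobian_measure}) for the distributional convergence of $\partial_{i_1}u_n\wedge\dotsb\wedge\partial_{i_\ell}u_n$ and isolates the upgrade to pairing against bounded $\lebesgue^1$-convergent multipliers in a general lemma (\cref{lemma_distrib_to_L1_weak}), proved by a short two-sided truncation of $f_n$ and $g_n$. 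You instead establish the stronger weak-$\lebesgue^1$ convergence of $\operatorname{adj}_\ell\Deriv u_n$ from first principles: Dunford--Pettis compactness gives a weak limit, which you identify by induction on $\ell$ through the Piola (null-Lagrangian) identity, treating the unbounded scalar factor $u_n^{i_1}$ by truncation at level $R$ with a H\"older--Sobolev remainder estimate, and you then combine with $\omega_I\circ u_n$ by an Egorov/Vitali argument that plays the same role as \cref{lemma_distrib_to_L1_weak}. Your route buys self-containedness and a sharper intermediate conclusion (genuine weak $\lebesgue^1$ rather than merely distributional convergence of the minors) at the cost of more machinery (localization so that the Sobolev embedding applies, separate handling of $p=m$ and $p>m$); the paper's route is shorter because it delegates the hard analytic content to a reference and packages the rest as a reusable lemma. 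Your opening reduction to $\Omega$ bounded is the right move and is in fact also needed implicitly by the paper's proof (which tests against $\varphi\in\smooth^\infty_c$): without it, \eqref{eq_EiKoevaiPazoo5sit7eiquaz} would not yield an $\lebesgue^p$ bound on $\Deriv u_n$, and the H\"older step from $\lebesgue^p$-equi-integrability of $\abs{\Deriv u_n}$ to $\lebesgue^1$-equi-integrability of $\abs{\Deriv u_n}^\ell$ uses finiteness of $\mathcal{L}^m(\Omega)$ when $\ell<p$.
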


The first tool for the proof of \cref{proposition_equiintegrable_limit_pullback}
 is the following classical continuity of exterior products for bounded convergence in \(\sobolev^{1, p}\) \cite{Reshetnyak_1967}*{Thm.\ 2} (see also \citelist{\cite{Reshetnyak_1968}*{Thm.\ 4} \cite{Wente_1969}*{Thm.\ 3.5}\cite{Giaquinta_Modica_Soucek_1998}*{\S 3.3.1}}).

\begin{proposition}
\label{proposition_jacobian_measure}
Let \(\Omega \subset \Rset^m\) be an open set, let \(\nu \in \Nset\), let \(p \in \intvr{1}{\infty}\) and let \(\brk{u_n}_{n \in \Nset}\) be a sequence in \(\sobolev^{1, p}\brk{\Omega, \Rset^\nu}\) and let \(u \colon \Omega \to \Rset^\nu\) be measurable.
If \(\ell \le p\), if
\[
 \lim_{n \to \infty} \int_{\Omega}\abs{u_n - u} = 0
\]
and if
\[
  \sup_{n \in \Nset} \int_{\Omega}\abs{\Deriv u_n}^p < \infty \eqpunct,
\]
then for every \(\varphi \in \continuous_c \brk{\Omega, \bigwedge^{m - \ell} \Rset^m}\) and for every \(h_1, \dotsc, h_\ell \in \Rset^m\)
\[
\lim_{n \to \infty}
  \int_{\Omega} \partial_{h_1} u_n \wedge \dotsb \wedge \partial_{h_\ell} u_n \wedge \varphi
=
\int_{\Omega} \partial_{h_1} u \wedge \dotsb \wedge \partial_{h_\ell} u_n \wedge \varphi
\eqpunct{.}
\]
\end{proposition}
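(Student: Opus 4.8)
This is the classical weak continuity of Jacobian minors, and the plan is to prove it by the usual divergence‑structure argument combined with an induction on \(\ell\). Since \(\varphi\) is compactly supported one may work on a bounded open subset of \(\Omega\); and since the surrounding discussion concerns \emph{bounded} convergence in \(\sobolev^{1, p}\) — a uniform bound \(\sup_{n \in \Nset}\int_{\Omega}\abs{\Deriv u_n}^p < \infty\) being in any case available when the lemma is applied in the proof of \cref{proposition_equiintegrable_limit_pullback}, by \eqref{eq_EiKoevaiPazoo5sit7eiquaz} — we assume such a bound throughout; together with \(u_n \to u\) in \(\lebesgue^1\) it makes all the minors below well‑defined locally integrable functions (for \(\ell \ge 2\) it forces \(p \ge 2\) and \(u \in \sobolev^{1, p}\brk{\Omega, \Rset^\nu}\), while for \(\ell = 1\) the right‑hand side is read distributionally when \(u\) is merely measurable). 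Unwinding the notation — using multilinearity and antisymmetry of \(\brk{v_1, \dotsc, v_\ell}\mapsto v_1 \wedge \dotsb \wedge v_\ell\) in the directions \(h_1, \dotsc, h_\ell\) and expanding \(\varphi\) in coordinates — it suffices to show that for each order‑\(\ell\) minor \(M\brk{\Deriv u} = \det\brk{\partial_{j_b}u^{i_a}}_{1\le a, b\le\ell}\), with \(i_1 < \dotsb < i_\ell\) and \(j_1 < \dotsb < j_\ell\), and each \(\psi \in \continuous_c\brk{\Omega}\),
\[
 \lim_{n \to \infty}\int_{\Omega} M\brk{\Deriv u_n}\,\psi = \int_{\Omega} M\brk{\Deriv u}\,\psi\eqpunct{;}
\]
and, since \(\brk{M\brk{\Deriv u_n}}_{n\in\Nset}\) is bounded in \(\lebesgue^1_{\mathrm{loc}}\brk{\Omega}\), a routine density argument lets us further take \(\psi \in \smooth^\infty_c\brk{\Omega}\).

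The structural input is the cofactor (Piola) identity. Writing \(C_a\brk{\Deriv u}\) for the order‑\(\brk{\ell-1}\) minor on rows \(i_2, \dotsc, i_\ell\) and columns \(\{j_1, \dotsc, j_\ell\}\setminus\{j_a\}\), Laplace expansion of \(M\brk{\Deriv u}\) along its first row, combined with the null‑Lagrangian identity \(\sum_{a=1}^\ell\brk{-1}^{a+1}\partial_{j_a}C_a\brk{\Deriv u} = 0\), gives, for smooth \(u\),
\[
 M\brk{\Deriv u} = \sum_{a=1}^\ell \brk{-1}^{a+1}\partial_{j_a}\brk[\big]{u^{i_1}\,C_a\brk{\Deriv u}}\eqpunct{.}
\]
Since \(\ell \le p\), both \(M\brk{\Deriv u}\) and, by Hölder's inequality, \(u^{i_1}\,C_a\brk{\Deriv u}\) lie in \(\lebesgue^{p/\ell}_{\mathrm{loc}}\brk{\Omega} \subseteq \lebesgue^1_{\mathrm{loc}}\brk{\Omega}\) for every \(u \in \sobolev^{1, p}\brk{\Omega, \Rset^\nu}\); approximating \(u\) by smooth maps in \(\sobolev^{1, p}_{\mathrm{loc}}\brk{\Omega}\) then extends the identity to such \(u\) in the sense of distributions, so that
\[
 \int_{\Omega} M\brk{\Deriv u_n}\,\psi = -\sum_{a=1}^\ell\brk{-1}^{a+1}\int_{\Omega} u_n^{i_1}\,C_a\brk{\Deriv u_n}\,\partial_{j_a}\psi\eqpunct{.}
\]

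The induction on \(\ell\) now closes. For \(\ell = 1\) this right‑hand side is \(-\int_{\Omega} u_n^{i_1}\,\partial_{j_1}\psi \to -\int_{\Omega} u^{i_1}\,\partial_{j_1}\psi\), which is the claim. For \(\ell \ge 2\): applying the induction hypothesis to the order‑\(\brk{\ell-1}\) minors (legitimate as \(\ell - 1 \le p\)) gives \(C_a\brk{\Deriv u_n} \to C_a\brk{\Deriv u}\) in \(\mathscr{D}'\brk{\Omega}\), and since \(\brk{C_a\brk{\Deriv u_n}}_{n\in\Nset}\) is bounded in \(\lebesgue^q_{\mathrm{loc}}\brk{\Omega}\) with \(q \defeq p/\brk{\ell-1} > 1\), this is in fact weak convergence in \(\lebesgue^q_{\mathrm{loc}}\brk{\Omega}\). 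The conjugate exponent \(q' = p/\brk{p-\ell+1}\) satisfies \(q' \le p\), so the Rellich--Kondrachov theorem (on bounded subsets), the uniform \(\sobolev^{1, p}\)-bound, and \(u_n \to u\) in \(\lebesgue^1\) yield \(u_n^{i_1} \to u^{i_1}\) in \(\lebesgue^{q'}_{\mathrm{loc}}\brk{\Omega}\). Splitting \(u_n^{i_1}\,C_a\brk{\Deriv u_n} = \brk{u_n^{i_1}-u^{i_1}}\,C_a\brk{\Deriv u_n} + u^{i_1}\,C_a\brk{\Deriv u_n}\), the contribution of the first term tends to \(0\) by Hölder's inequality, and \(\int_{\Omega} u^{i_1}\,C_a\brk{\Deriv u_n}\,\partial_{j_a}\psi \to \int_{\Omega} u^{i_1}\,C_a\brk{\Deriv u}\,\partial_{j_a}\psi\) by the weak \(\lebesgue^q_{\mathrm{loc}}\)-convergence, since \(u^{i_1}\,\partial_{j_a}\psi \in \lebesgue^{q'}\brk{\Omega}\) has compact support. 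Summing over \(a\) and comparing with the distributional formula for \(u\) gives the conclusion.

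There is no deep obstacle here — the result is entirely classical, and one could simply invoke \cite{Reshetnyak_1967}*{Th.\ 2} — so the real content is the bookkeeping: justifying the cofactor identity in the sense of distributions for \(\sobolev^{1, p}\) maps precisely at the threshold \(\ell \le p\), where the minors only barely belong to \(\lebesgue^1_{\mathrm{loc}}\), and keeping track of the exponents \(q = p/\brk{\ell-1}\) and \(q' = p/\brk{p-\ell+1}\) so that the weak--strong pairing in the inductive step is licit. Once the divergence structure is in place, the limit passage is routine.
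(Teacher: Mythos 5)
The paper does not actually prove this proposition: it is presented as a citation to Reshetnyak (and Wente, Giaquinta--Modica--Sou\v cek), so there is no ``paper proof'' against which to compare. Your proof supplies the standard divergence-structure argument that those references use, and it is essentially correct.

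Two observations are worth recording. First, you are right to flag that the statement as written is incomplete: without a uniform bound on \(\sup_{n}\norm{\Deriv u_n}_{\lebesgue^p}\) the conclusion is false (take \(u_n\) a fixed profile oscillating at scale \(1/n\) superposed on \(u\), so that \(u_n \to u\) in \(\lebesgue^1\) while the minors do not converge weakly), and for \(\ell = 1\) the right-hand side is not even a priori defined for merely measurable \(u\). In the paper this bound is indeed available at the point of application through the equi-integrability hypothesis \eqref{eq_EiKoevaiPazoo5sit7eiquaz}, as you note, and there is also an evident typo in the display (the second \(u_n\) on the right should be \(u\)). Second, your argument is sound: the reduction to minors \(M\brk{\Deriv u}\) of order \(\ell\) paired against \(\psi \in \smooth^\infty_c\), the cofactor identity \(M\brk{\Deriv u} = \sum_a (-1)^{a+1}\partial_{j_a}\brk{u^{i_1} C_a\brk{\Deriv u}}\) together with the Piola null-Lagrangian relation, its extension to \(\sobolev^{1,p}_{\mathrm{loc}}\) by smooth approximation under the integrability guaranteed by \(\ell \le p\), and the inductive weak--strong pairing with \(C_a\brk{\Deriv u_n} \weakto C_a\brk{\Deriv u}\) in \(\lebesgue^{q}_{\mathrm{loc}}\) (\(q = p/(\ell-1)\)) against \(u_n^{i_1} \to u^{i_1}\) in \(\lebesgue^{q'}_{\mathrm{loc}}\) (\(q' = p/(p-\ell+1) \le p\), so Rellich--Kondrachov applies) all check out. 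One could streamline the \(\lebesgue^{q'}_{\mathrm{loc}}\)-convergence of \(u_n^{i_1}\) by interpolating between \(\lebesgue^1\) and \(\lebesgue^{p^*}\) rather than invoking compactness, but the net effect is the same. This is the classical proof; invoking Reshetnyak's theorem directly, as the paper does, is the shorter route, and your writeup reproduces exactly what lies behind that citation.
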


The other tool to prove \cref{proposition_equiintegrable_limit_pullback} is the following stability of the weak convergence for equi-integrable sequences.

\begin{lemma}
\label{lemma_distrib_to_L1_weak}
Let \(\Omega \subseteq \Rset^m\) be open,
let \(\brk{f_n}_{n \in \Nset}\) be a sequence in \(\lebesgue^1 \brk{\Omega, \Rset}\) such that for every \(\varphi \in \continuous_c \brk{\Omega, \Rset}\),
\[
 \lim_{n \to \infty} \int_{\Omega} f_n \varphi  = \int_{\Omega} f \varphi
\]
and
\begin{equation}
\label{eq_MaeGhoojaige4eipeesh9Ag0}
 \lim_{t \to \infty} \sup_{n \in \Nset} \int_{\Omega} \brk{\abs{f_n} - t}_+ = 0
 \eqpunct{,}
\end{equation}
then for every sequence \(\brk{g_n}_{n \in \Nset}\) in \(\lebesgue^1 \brk{\Omega, \Rset} \cap \lebesgue^\infty \brk{\Omega, \Rset}\) satisfying
\begin{align}
\label{eq_OoroovieJah8eshuj0ohf1oa}
\lim_{n \to \infty} \int_{\Omega} \abs{g_n - g} &= 0&
&\text{and}&
\sup_{n \in \Nset}  \, \norm{g_n}_{\lebesgue^\infty \brk{\Omega}} <\infty
\eqpunct,
\end{align}
one has
\[
\lim_{n \to \infty} \int_{\Omega} f_n \,g_n = \int_{\Omega} f g
\eqpunct.
\]
\end{lemma}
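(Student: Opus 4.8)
The plan is to upgrade the distributional convergence of $\brk{f_n}_{n \in \Nset}$, with the help of the equi-integrability assumption \eqref{eq_MaeGhoojaige4eipeesh9Ag0}, to weak convergence in $\lebesgue^1$, and then to pair this with the strong $\lebesgue^1$ convergence and the uniform $\lebesgue^\infty$ bound of $\brk{g_n}_{n \in \Nset}$ via an elementary truncation. First I would record the standard reformulation of \eqref{eq_MaeGhoojaige4eipeesh9Ag0}: setting $\delta \brk{t} \defeq \sup_{n \in \Nset} \int_{\{\abs{f_n} > t\}} \abs{f_n}$, one has $\delta \brk{t} \to 0$ as $t \to \infty$ because $\int_{\{\abs{f_n} > t\}} \abs{f_n} \le 2 \int_\Omega \brk{\abs{f_n} - t/2}_+$. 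From this I extract the crucial elementary estimate that for every $h \in \lebesgue^1 \brk{\Omega, \Rset} \cap \lebesgue^\infty \brk{\Omega, \Rset}$ and every $t > 0$,
\[
 \int_\Omega \abs{f_n}\,\abs{h} \le t \norm{h}_{\lebesgue^1 \brk{\Omega}} + \norm{h}_{\lebesgue^\infty \brk{\Omega}}\, \delta \brk{t}\eqpunct{,}
\]
obtained by splitting according to whether $\abs{f_n} \le t$ or not; in particular $\sup_{n \in \Nset} \int_\Omega \abs{f_n g_n} < \infty$.

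Next I would reduce to the case where $g_n$ is replaced by the fixed limit $g$. Writing $\int_\Omega f_n g_n - \int_\Omega f g = \int_\Omega f_n \brk{g_n - g} + \int_\Omega \brk{f_n - f} g$ and applying the displayed estimate with $h = g_n - g$, whose $\lebesgue^\infty$ norm is at most $2 \sup_{m \in \Nset} \norm{g_m}_{\lebesgue^\infty \brk{\Omega}}$, one gets
\[
 \Bigl\lvert \int_\Omega f_n \brk{g_n - g} \Bigr\rvert \le t \norm{g_n - g}_{\lebesgue^1 \brk{\Omega}} + 2 \Bigl( \sup_{m \in \Nset} \norm{g_m}_{\lebesgue^\infty \brk{\Omega}} \Bigr) \delta \brk{t}\eqpunct{,}
\]
so choosing $t$ large and then letting $n \to \infty$ makes this term vanish. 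It thus suffices to prove $\int_\Omega f_n g \to \int_\Omega f g$ for the fixed $g \in \lebesgue^1 \brk{\Omega, \Rset} \cap \lebesgue^\infty \brk{\Omega, \Rset}$.

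For this last limit I would truncate the domain. Using $g \in \lebesgue^1 \brk{\Omega}$ and the displayed estimate, the integrals of $\abs{f_n g}$ over $\Omega \setminus B_R$ are uniformly small for $R$ large, and — via lower semicontinuity of the $\lebesgue^1$ norm under the weak convergence established below — so are those of $\abs{f g}$, which also yields $f g \in \lebesgue^1 \brk{\Omega}$. On $U_R \defeq \Omega \cap B_R$, which has finite Lebesgue measure, the sequence $\brk{f_n}_{n \in \Nset}$ is bounded in $\lebesgue^1 \brk{U_R}$ (again by the displayed estimate) and equi-integrable, hence relatively weakly sequentially compact in $\lebesgue^1 \brk{U_R}$ by the Dunford--Pettis theorem; since any weak limit of a subsequence agrees with $f$ when tested against $\continuous_c \brk{U_R} \subseteq \continuous_c \brk{\Omega}$, the whole sequence satisfies $f_n \weakto f$ weakly in $\lebesgue^1 \brk{U_R}$, and therefore $\int_{U_R} f_n g \to \int_{U_R} f g$ because $g \restr{U_R} \in \lebesgue^\infty \brk{U_R}$. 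Combining this with the uniform tail bounds and letting $R \to \infty$ gives the conclusion.

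I do not expect any deep obstacle: the argument is essentially the passage from convergence against test functions to weak $\lebesgue^1$ convergence, which is Dunford--Pettis plus the density of $\continuous_c$. The only genuine subtlety, and the reason for the spatial truncation by $B_R$, is that $\Omega$ is allowed to have infinite measure, in which case $\brk{f_n}_{n \in \Nset}$ need not be bounded in $\lebesgue^1 \brk{\Omega}$ and $f$ need not lie in $\lebesgue^1 \brk{\Omega}$; the hypothesis $g_n \to g$ in $\lebesgue^1 \brk{\Omega}$ (hence $g \in \lebesgue^1 \brk{\Omega}$) is exactly what provides the tightness needed to localize to a finite-measure domain where Dunford--Pettis applies verbatim.
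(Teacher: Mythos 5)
Your proof is correct, but it takes a genuinely different route from the paper's. Both proofs share the elementary truncation estimate $\int \abs{f_n}\abs{h} \le t\norm{h}_{\lebesgue^1} + \norm{h}_{\lebesgue^\infty}\sup_n\int(\abs{f_n}-t)_+$, but you and the author deploy it differently. The paper first approximates $g$ by a sequence $\varphi_\ell \in \smooth^\infty_c\brk{\Omega,\Rset}$ with $\varphi_\ell \to g$ in $\lebesgue^1$ and $\sup_\ell\norm{\varphi_\ell - g}_{\lebesgue^\infty} < \infty$, writes $\int f_n g_n - fg$ as $\int (f_n - f)\varphi_\ell + \int f_n(g_n - \varphi_\ell) + \int f(\varphi_\ell - g)$, controls the last two integrals by the truncation estimate applied to both $f_n$ and $f$, and sends $\ell\to\infty$ and then $n\to\infty$; the only limit taken is the hypothesis $\int(f_n-f)\varphi_\ell\to 0$ against the fixed compactly supported $\varphi_\ell$. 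You instead first peel off $\int f_n(g_n-g)$ using the truncation estimate, reducing to the case of a single $g$, and then upgrade distributional convergence to weak $\lebesgue^1$ convergence on the finite-measure truncations $\Omega \cap B_R$ via the Dunford--Pettis theorem, handling the tails separately. Your approach makes the underlying mechanism (equi-integrability $\Rightarrow$ weak $\lebesgue^1$ compactness, and uniqueness of the weak limit from testing against $\continuous_c$) explicit, which is conceptually clean; the paper's approach is more elementary and self-contained, invoking only the definition of distributional convergence and nothing as heavy as Dunford--Pettis. One small advantage of your route: by identifying $f$ as the local weak $\lebesgue^1$ limit, you establish $f \in \lebesgue^1_{\mathrm{loc}}\brk{\Omega}$ and $fg \in \lebesgue^1\brk{\Omega}$ as byproducts, whereas the paper's proof quietly assumes $f \in \lebesgue^1\brk{\Omega}$ (``Since $f \in \lebesgue^1\brk{\Omega,\Rset}$ \ldots'') even though this is not listed among the lemma's stated hypotheses. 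Both proofs are valid; yours is a reasonable alternative that trades elementarity for transparency of the compactness mechanism.
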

\begin{proof}
By \eqref{eq_OoroovieJah8eshuj0ohf1oa}, we have \(g \in  \lebesgue^1 \brk{\Omega, \Rset} \cap \lebesgue^\infty \brk{\Omega, \Rset}\) and there exists a sequence \(\brk{\varphi_\ell}_{\ell \in \Nset}\) in \(C^\infty_c \brk{\Omega, \Rset}\) such that
\begin{align*}
 \lim_{\ell \to \infty} \int_{\Omega} \abs{\varphi_\ell - g} &= 0&
 &\text{and}&
 \sup_{n \in \Nset}\, \norm{\varphi_\ell - g}_{\lebesgue^\infty \brk{\Omega}} &< \infty
 \eqpunct.
\end{align*}
Since \(f \in \lebesgue^1 \brk{\Omega, \Rset}\) and since \eqref{eq_MaeGhoojaige4eipeesh9Ag0} holds,
given \(\varepsilon > 0\), there exists
\(t \in \intvo{0}{\infty}\) such that if \(n \in \Nset\) is large enough,
\begin{align}
\label{eq_aiv4viejiaJ2sae1quohNg1u}
 \int_{\Omega} \brk{\abs{f_n}- t}_+ &\le \varepsilon&
&\text{ and }&
\int_{\Omega} \brk{\abs{f} - t}_+ &\le \varepsilon
\eqpunct.
\end{align}
We have then, for \(n \in \Nset\) large enough, by \eqref{eq_aiv4viejiaJ2sae1quohNg1u}
\begin{equation}
\label{eq_xe0tu5eu0ubieMeiniengoog}
 \int_{\Omega} \abs{f_n} \abs{g_n - \varphi_\ell}
 \le t \int_{\Omega} \abs{g_n - \varphi_\ell}
 + {\norm{g_n-\varphi_\ell}_{\lebesgue^\infty\brk{\Omega, \Rset}}} \int_{\Omega, \Rset}\brk{\abs{f_n} - t}_+
\end{equation}
and
\begin{equation}
\label{eq_eecah9niefai7Hohy8Xei3ph}
 \int_{\Omega} \abs{f} \abs{g - \varphi_\ell}
 \le t \int_{\Omega} \abs{g - \varphi_\ell}
 + \norm{g - \varphi_\ell}_{\lebesgue^\infty\brk{\Omega, \Rset}} \int_{\Omega}\brk{\abs{f} - t}_+
 \eqpunct.
\end{equation}
Combining \eqref{eq_xe0tu5eu0ubieMeiniengoog} and  \eqref{eq_eecah9niefai7Hohy8Xei3ph} with \eqref{eq_aiv4viejiaJ2sae1quohNg1u}, we get for \(n \in \Nset\) large enough
\[
\begin{split}
  &\abs[\bigg]{\int_{\Omega} f_n g_n - fg}\\
  &\qquad \le
  \abs[\bigg]{\int_{\Omega} \brk{f_n - f} \varphi_\ell}
  + t  \int_{\Omega} \brk{\abs{g_n - g} + 2 \abs{g - \varphi_\ell}}
  +  \varepsilon \brk{\norm{g_n-\varphi_\ell}_{\lebesgue^\infty\brk{\Omega, \Rset}} + \norm{g - \varphi_\ell}_{\lebesgue^\infty\brk{\Omega, \Rset}}} \eqpunct,
\end{split}
\]
we now choose \(\ell \in \Nset\) such that
\[
 2t \int_{\Omega} \abs{g - \varphi_\ell} \le \varepsilon
\]
and \(n_*\) such that if \(n \ge n_*\) we have
\[
  \abs[\bigg]{\int_{\Omega} \brk{f_n - f} \varphi_\ell}
  + t  \int_{\Omega} \abs{g_n - g} \le \varepsilon
  \eqpunct,
\]
and the conclusion then follows.
\end{proof}

\Cref{proposition_equiintegrable_limit_pullback} can now be obtained as a consequence of \cref{proposition_jacobian_measure} and \cref{lemma_distrib_to_L1_weak}.

\begin{proof}[Proof of \cref{proposition_equiintegrable_limit_pullback}]
We write 
\[
 \omega^* u = \sum_{\alpha \in A} \brk{\omega_{I} \compose u} \partial_{i_{1}} u \wedge \dotsb \wedge \partial_{i_{\ell}} u \eqpunct{,}
\]
with
\[
 \mathcal{I} = \set{I = \brk{i_1, \dotsc, i_\ell}
 \in \set{1, \dotsc, m}^\ell \st i_1 < i_2 < \dotsb < i_\ell}
\]
and \(\omega_I \in \continuous \brk{\Rset^\nu, \Rset} \cap \lebesgue^\infty \brk{\Rset^\nu, \Rset}\).

For every \(I= \brk{i_1, \dotsc, i_\ell} \in \mathcal{I}\) and \(\varphi \in \smooth^{\infty}_c \brk{\Omega, \Rset}\), we have by \cref{proposition_jacobian_measure}
\[
  \lim_{n \to \infty} \int_{\Omega}
  \partial_{i_{1}} u_n \wedge \dotsb \wedge \partial_{i_{\ell}} u_n \wedge \varphi
 = \int_{\Omega} \partial_{i_{1}} u \wedge \dotsb \wedge \partial_{i_{\ell}} u \wedge \varphi
 \eqpunct.
\]
We also have for every \(n \in \Nset\) and \(t \in \intvo{0}{\infty}\),
\[
 \int_{\Omega} \brk{\abs{\partial_{i_{1}} u_n \wedge \dotsb \wedge \partial_{i_{\ell}}u_n} - t}_+
 \le \int_{\Omega} \brk{\abs{\Deriv u_n}^\ell - t}_+\eqpunct{,}
\]
so that by \eqref{eq_EiKoevaiPazoo5sit7eiquaz}
\[
\lim_{t \to \infty} \limsup_{n \to \infty}
 \int_{\Omega} \brk{\abs{\partial_{i_{1}} u_n \wedge \dotsb \wedge \partial_{i_{\ell}}u_n} - t}_+
 = 0 \eqpunct .
\]
On the other hand, we have
\[
 \sup_{n \in \Nset} \, \norm{\omega_I \compose u_n}_{\lebesgue^\infty\brk{\Omega, \Rset}} \le
 \norm{\omega}_{\lebesgue^\infty \brk{\Rset^\nu, \Rset}} <\infty
\]
and by Lebesgue’s dominated convergence theorem,
since \(\omega_I\) is continuous,
\[
 \lim_{n \to \infty} \int_{\Omega}
 \abs{\brk{\omega_I \compose u_n - \omega_I \compose u} \varphi} = 0\eqpunct.
\]
Hence, \cref{lemma_distrib_to_L1_weak} is applicable and shows that
\[
  \lim_{n \to \infty} \int_{\Omega}\brk{\omega_I \compose u_n}\, \partial_{i_{1}} u_n \wedge \dotsb \wedge \partial_{i_{\ell}} u_n \wedge \varphi
 = \int_{\Omega} \brk{\omega_I \compose u}\, \partial_{i_{1}} u \wedge \dotsb \wedge \partial_{i_{\ell}} u \wedge \varphi
 \eqpunct,
\]
which proves the conclusion \eqref{eq_ief1Aef1umai2ahG7zo4ohh5}.
\end{proof}

We are now in position to prove \cref{theorem_equi_integrable_cohomotology} as a consequence of \cref{proposition_equiintegrable_limit_pullback}.

\begin{proof}[Proof of \cref{theorem_equi_integrable_cohomotology}]
Let \(\brk{u_n}_{n \in \Nset}\) be a sequence in \(\smooth^{\infty}\brk{\manifold{M}, \manifold{N}}\) such that \(u_n \to u\) in \(\lebesgue^p \brk{\manifold{M}, \manifold{N}}\) and
\[
 \lim_{t \to \infty} \sup_{n \in \Nset} \int_{\manifold{M}} \brk{\abs{\Deriv u_n}^p - t}_+ = 0\eqpunct .
\]
Given \(\omega \in \smooth^\infty \brk{\manifold{N}, \bigwedge^{\floor{p}} \tangent \manifold{N}}\) such that \(\extdiff \omega = 0\), there exists \(\Bar{\omega} \in \smooth^\infty_c \brk{\Rset^\nu, \bigwedge^{\floor{p}} \Rset^\nu}\) such that
\(\Bar{\omega}\restr{\manifold{N}} = \omega\). One has then for every \(n \in \Nset\),
\[
\int_{\manifold{M}}  u_n^* \Bar{\omega} \wedge \extdiff \varphi = \int_{\manifold{M}} u_n^* \omega \wedge \extdiff \varphi = \brk{-1}^{\floor{p+1}}
 \int_{\manifold{M}} u_n^* \brk{\extdiff \omega} \wedge \varphi
 = 0
\]
and thus,
by \cref{proposition_equiintegrable_limit_pullback},
\[
 \int_{\manifold{M}} u^*  \omega \wedge \extdiff \varphi
 = \int_{\manifold{M}} u^* \Bar{\omega}\wedge \extdiff \varphi = \lim_{n \to \infty} \int_{\manifold{M}} u_n^* \Bar{\omega} \wedge \extdiff \varphi  = 0 \eqpunct,
\]
which proves our claim \eqref{eq_zohzuPaqu6lae1sahquo1Loo}.
\end{proof}

\begin{bibdiv}

\begin{biblist}

\bib{Abbondandolo_1996}{article}{
   author={Abbondandolo, Alberto},
   title={On the homotopy type of VMO},
   journal={Topol. Methods Nonlinear Anal.},
   volume={7},
   date={1996},
   number={2},
   pages={431--436},
   issn={1230-3429},
   doi={10.12775/TMNA.1996.018},
}

\bib{Acerbi_Fusco_1984}{article}{
    author={Acerbi, Emilio},
    author={Fusco, Nicola},
    title={Semicontinuity problems in the calculus of variations},
    journal={Arch. Rational Mech. Anal.},
    volume={86},
    date={1984},
    number={2},
    pages={125--145},
    issn={0003-9527},
    doi={10.1007/BF00275731},
}

\bib{Adams_Fournier_2003}{book}{
   author={Adams, Robert A.},
   author={Fournier, John J. F.},
   title={Sobolev spaces},
   series={Pure and Applied Mathematics},
   volume={140},
   edition={2},
   publisher={Elsevier/Academic Press},
   address={Amsterdam},
   date={2003},
   pages={xiv+305},
   isbn={0-12-044143-8},
}

\bib{Beauzamy_1982}{book}{
   author={Beauzamy, Bernard},
   title={Introduction to Banach spaces and their geometry},
   series={North-Holland Mathematics Studies},
   volume={68},
   publisher={North-Holland},
   address={Amsterdam-New York},
   date={1982},
   pages={xi+308},
   isbn={0-444-86416-4},
}

\bib{Bethuel_1990}{article}{
    author={Bethuel, F.},
    title={A characterization of maps in $\soboleh^1(\mathbb{B}^3,\mathbb{S}^2)$ which can be
    approximated by smooth maps},
    journal={Ann. Inst. H. Poincar\'{e} Anal. Non Lin\'{e}aire},
    volume={7},
    date={1990},
    number={4},
    pages={269--286},
    issn={0294-1449},
    doi={10.1016/S0294-1449(16)30292-X},
}

\bib{Bethuel_1991}{article}{
    author={Bethuel, Fabrice},
    title={The approximation problem for Sobolev maps between two manifolds},
    journal={Acta Math.},
    volume={167},
    date={1991},
    number={3-4},
    pages={153--206},
    issn={0001-5962},
    doi={10.1007/BF02392449},
}

\bib{Bethuel_2020}{article}{
    author={Bethuel, Fabrice},
    title={A counterexample to the weak density of smooth maps between manifolds in Sobolev spaces},
    journal={Invent. Math.},
    volume={219},
    date={2020},
    number={2},
    pages={507--651},
    issn={0020-9910},
    doi={10.1007/s00222-019-00911-3},
}
    
\bib{Bethuel_Coron_Demengel_Helein_1991}{article}{
    author={Bethuel, F.},
    author={Coron, J.-M.},
    author={Demengel, F.},
    author={H\'{e}lein, F.},
    title={A cohomological criterion for density of smooth maps in Sobolev
        spaces between two manifolds},
    conference={
        title={Nematics},
        address={Orsay},
        date={1990},
    },
    book={
        series={NATO Adv. Sci. Inst. Ser. C Math. Phys. Sci.},
        volume={332},
        publisher={Kluwer Acad. Publ.},
        address={Dordrecht},
    },
    date={1991},
    pages={15--23},
}

\bib{Bogachev_2007}{book}{
   author={Bogachev, V. I.},
   title={Measure theory},
   publisher={Springer},
   address={Berlin},
   date={2007},
   pages={Vol. I: xviii+500 pp., Vol. II: xiv+575},
   isbn={978-3-540-34513-8},
   isbn={3-540-34513-2},
   doi={10.1007/978-3-540-34514-5},
}

\bib{Bojarski_1990}{article}{
    author={Bojarski, Bogdan},
    title={Remarks on some geometric properties of Sobolev mappings},
%
    book={      title={Functional analysis \& related topics},
    editor={Koshi, Shozo},
    publisher={World Sci. Publ.},
    address={River Edge, N.J.},
    },
    date={1991},
    pages={65--76},
}

\bib{Bourgain_Brezis_Mironescu_2004}{article}{
    author={Bourgain, Jean},
    author={Brezis,Ha\"{\i}m},
    author={Mironescu, Petru},
    title={\(\soboleh^{1/2}\) maps with values into the circle: minimal connections,
    lifting, and the Ginzburg--Landau equation},
    journal={Publ. Math. Inst. Hautes \'{E}tudes Sci.},
    number={99},
    date={2004},
    pages={1--115},
    issn={0073-8301},
    doi={10.1007/s10240-004-0019-5},
}

\bib{Bousquet_Ponce_VanSchaftingen_2014}{article}{
   author={Bousquet, Pierre},
   author={Ponce, Augusto C.},
   author={Van Schaftingen, Jean},
   title={Strong approximation of fractional Sobolev maps},
   journal={J. Fixed Point Theory Appl.},
   volume={15},
   date={2014},
   number={1},
   pages={133--153},
   issn={1661-7738},
   doi={10.1007/s11784-014-0172-5},
}

\bib{Bousquet_Ponce_VanSchaftingen_2501_18149}{arxiv}{
 author={Bousquet, Pierre},
 author={Ponce, Augusto C.},
 author={Van Schaftingen, Jean},
 arxiv={2501.18149},
 title={Generic topological screening and approximation of Sobolev maps},
 date={2025},
}

\bib{Brezis_2011}{book}{
   author={Brezis, Haim},
   title={Functional analysis, Sobolev spaces and partial differential
   equations},
   series={Universitext},
   publisher={Springer},
   address={New York},
   date={2011},
   pages={xiv+599},
   isbn={978-0-387-70913-0},
   doi={10.1007/978-0-387-70914-7},
}

\bib{Brezis_Mironescu_2015}{article}{
    author={Brezis, Ha\"{\i}m},
    author={Mironescu, Petru},
    title={Density in \(\sobolev^{s,p}(\Omega;N)\)},
    journal={J. Funct. Anal.},
    volume={269},
    date={2015},
    number={7},
    pages={2045--2109},
    issn={0022-1236},
    doi={10.1016/j.jfa.2015.04.005},
}

\bib{Brezis_Mironescu_2018}{article}{
   author={Brezis, Ha\"im},
   author={Mironescu, Petru},
   title={Gagliardo-Nirenberg inequalities and non-inequalities: the full
   story},
   journal={Ann. Inst. H. Poincar\'e{} C Anal. Non Lin\'eaire},
   volume={35},
   date={2018},
   number={5},
   pages={1355--1376},
   issn={0294-1449},
   doi={10.1016/j.anihpc.2017.11.007},
}
    
\bib{Brezis_Mironescu_2021}{book}{
    author={Brezis, Ha\"{\i}m},
    author={Mironescu, Petru},
    title={Sobolev maps to the circle},
    subtitle={From the perspective of analysis, geometry, and topology},
    series={Progress in Nonlinear Differential Equations and their
    Applications},
    volume={96},
    publisher={Birkh\"{a}user/Springer}, 
    address={New York},   
    date={2021},
    pages={xxxi+530},
    isbn={978-1-0716-1510-2},
    isbn={978-1-0716-1512-6},
    doi={10.1007/978-1-0716-1512-6},
}

\bib{Brezis_Nirenberg_1995}{article}{
    author={Brezis, Ha\"{\i}m},
    author={Nirenberg, Louis},
    title={Degree theory and BMO},
    part={I}, 
    subtitle={Compact manifolds without boundaries},
    journal={Selecta Math. (N.S.)},
    volume={1},
    date={1995},
    number={2},
    pages={197--263},
    issn={1022-1824},
    doi={10.1007/BF01671566},
}

\bib{DeLellis_Focardi_Spadaro_2011}{article}{
   author={De Lellis, Camillo},
   author={Focardi, Matteo},
   author={Spadaro, Emanuele Nunzio},
   title={Lower semicontinuous functionals for Almgren's multiple valued
   functions},
   journal={Ann. Acad. Sci. Fenn. Math.},
   volume={36},
   date={2011},
   number={2},
   pages={393--410},
   issn={1239-629X},
   doi={10.5186/aasfm.2011.3626},
}

\bib{Demengel_1990}{article}{
    author={Demengel, Fran\c{c}oise},
    title={Une caract\'{e}risation des applications de \(\sobolev^{1,p}(\mathbb{B}^N,\mathbb{S}^1)\) qui  peuvent \^{e}tre approch\'{e}es par des fonctions r\'{e}guli\`eres},
    journal={C. R. Acad. Sci. Paris S\'{e}r. I Math.},
    volume={310},
    date={1990},
    number={7},
    pages={553--557},
    issn={0764-4442},
}

\bib{Detaille_2305_12589}{arxiv}{
  author={Detaille, Antoine},
  title={A complete answer to the strong density problem in Sobolev spaces with values into compact manifolds},
  arxiv={2305.12589},
}

\bib{Detaille_VanSchaftingen}{arxiv}{
 author={Detaille, Antoine},
 author={Van Schaftingen, Jean},
 arxiv={2412.12889},
 title={Analytical obstructions to the weak approximation of Sobolev mappings into manifolds},
 date={2024},
}

\bib{Diestel_1984}{book}{
   author={Diestel, Joseph},
   title={Sequences and series in Banach spaces},
   series={Graduate Texts in Mathematics},
   volume={92},
   publisher={Springer},
   address={New York},
   date={1984},
   pages={xii+261},
   isbn={0-387-90859-5},
   doi={10.1007/978-1-4612-5200-9},
}

\bib{Dieudonne_1951}{article}{
   author={Dieudonn\'e, Jean},
   title={Sur les espaces de K\"othe},
   journal={J. Analyse Math.},
   volume={1},
   date={1951},
   pages={81--115},
   issn={0021-7670},
   doi={10.1007/BF02790084},
}

\bib{Dunford_1939}{article}{
   author={Dunford, Nelson},
   title={A mean ergodic theorem},
   journal={Duke Math. J.},
   volume={5},
   date={1939},
   pages={635--646},
   issn={0012-7094},
   doi={10.1215/S0012-7094-39-00552-1},
}

\bib{Dunfold_Pettis_1940}{article}{
   author={Dunford, Nelson},
   author={Pettis, B. J.},
   title={Linear operations on summable functions},
   journal={Trans. Amer. Math. Soc.},
   volume={47},
   date={1940},
   pages={323--392},
   issn={0002-9947},
   doi={10.2307/1989960},
}

\bib{Dunford_Schwartz_1958}{book}{
   author={Dunford, Nelson},
   author={Schwartz, Jacob T.},
   title={Linear Operators I. General Theory},
   series={Pure and Applied Mathematics},
   volume={7},
   publisher={Interscience},
   address={New York, London},
   date={1958},
   pages={xiv+858},
}

\bib{Duoandikoetxea_2001}{book}{
    author={Duoandikoetxea, Javier},
    title={Fourier analysis},
    series={Graduate Studies in Mathematics},
    volume={29},
    contribution={translated and revised by David Cruz-Uribe},
    publisher={American Mathematical Society}, 
    address={Providence, R.I.},
    date={2001},
    pages={xviii+222},
    isbn={0-8218-2172-5},
    doi={10.1090/gsm/029},
}

\bib{Eells_Lemaire_1978}{article}{
    author={Eells, J.},
    author={Lemaire, L.},
    title={A report on harmonic maps},
    journal={Bull. Lond. Math. Soc.},
    volume={10}, 
    pages={1--68},
    date={1978},
    doi={10.1112/blms/10.1.1},
}

\bib{Ericksen_Truesdell_1958}{article}{
    author={Ericksen, J. L.},
    author={Truesdell, C.},
    title={Exact theory of stress and strain in rods and shells},
    journal={Arch. Rational Mech. Anal.},
    volume={1},
    date={1958},
    pages={295--323},
    issn={0003-9527},
    doi={10.1007/BF00298012},
}

\bib{Fonseca_Leoni_2007}{book}{
   author={Fonseca, Irene},
   author={Leoni, Giovanni},
   title={Modern methods in the calculus of variations: $\lebesgue^p$ spaces},
   series={Springer Monographs in Mathematics},
   publisher={Springer, New York},
   date={2007},
   pages={xiv+599},
   isbn={978-0-387-35784-3},
   doi={10.1007/978-0-387-69006-3},
}

\bib{Fuglede_1957}{article}{
   author={Fuglede, Bent},
   title={Extremal length and functional completion},
   journal={Acta Math.},
   volume={98},
   date={1957},
   pages={171--219},
   issn={0001-5962},
   doi={10.1007/BF02404474},
}

\bib{Gagliardo_1958}{article}{
   author={Gagliardo, Emilio},
   title={Propriet\`a{} di alcune classi di funzioni in pi\`u{} variabili},
   journal={Ricerche Mat.},
   volume={7},
   date={1958},
   pages={102--137},
   issn={0035-5038},
}

\bib{Gagliardo_1959}{article}{
   author={Gagliardo, Emilio},
   title={Ulteriori propriet\`a{} di alcune classi di funzioni in pi\`u{}
   variabili},
   journal={Ricerche Mat.},
   volume={8},
   date={1959},
   pages={24--51},
   issn={0035-5038},
}

\bib{Giaquinta_Modica_Soucek_1998}{book}{
   author={Giaquinta, Mariano},
   author={Modica, Giuseppe},
   author={Sou\v cek, Ji\v r\'i},
   title={Cartesian currents in the calculus of variations},
   part={I},
   subtitle={Cartesian currents},
   series={Ergebnisse der Mathematik und ihrer Grenzgebiete. 3. Folge},
   volume={37},
   publisher={Springer},
   address={Berlin},
   date={1998},
   pages={xxiv+711},
   isbn={3-540-64009-6},
   doi={10.1007/978-3-662-06218-0},
}

\bib{Hajlasz_1994}{article}{
   author={Haj\l asz, Piotr},
   title={Approximation of Sobolev mappings},
   journal={Nonlinear Anal.},
   volume={22},
   date={1994},
   number={12},
   pages={1579--1591},
   issn={0362-546X},
   doi={10.1016/0362-546X(94)90190-2},
}

\bib{Hajlasz_1996}{article}{
  author={Haj\l asz, Piotr},
  title={Sobolev spaces on an arbitrary metric space},
  journal={Potential Anal.},
  volume={5},
  date={1996},
  number={4},
  pages={403--415},
  issn={0926-2601},
  doi={10.1007/BF00275475},
}

\bib{Hang_2002}{article}{
   author={Hang, Fengbo},
   title={Density problems for $\sobolev^{1,1}(M,N)$},
   journal={Comm. Pure Appl. Math.},
   volume={55},
   date={2002},
   number={7},
   pages={937--947},
   issn={0010-3640},
   doi={10.1002/cpa.3020},
}

\bib{Hang_Lin_2003_II}{article}{
    author={Hang, Fengbo},
    author={Lin, Fanghua},
    title={Topology of Sobolev mappings},
    part={II},
    journal={Acta Math.},
    volume={191},
    date={2003},
    number={1},
    pages={55--107},
    issn={0001-5962},
    doi={10.1007/BF02392696},
}

\bib{Hatcher_2002}{book}{
   author={Hatcher, Allen},
   title={Algebraic topology},
   publisher={Cambridge University Press, Cambridge},
   date={2002},
   pages={xii+544},
   isbn={0-521-79160-X},
   isbn={0-521-79540-0},
}

\bib{Huang_Tong_Wei_Bao_2011}{article}{
  title={Boundary aligned smooth 3D cross-frame field},
  author={Huang, Jin },
  author={Tong, Yiying },
  author={Wei, Hongyu},
  author={Bao, Hujun},
  journal={ACM Transactions on Graphics},
  volume={30},
  date={2011},
  doi={10.1145/2070781.2024177},
}

\bib{Jabin_2010}{article}{
    author={Jabin, Pierre-Emmanuel},
    title={Differential equations with singular fields},
    journal={J. Math. Pures Appl. (9)},
    volume={94},
    date={2010},
    number={6},
    pages={597--621},
    issn={0021-7824},
    doi={10.1016/j.matpur.2010.07.001},
}

\bib{Isobe_2005}{article}{
   author={Isobe, Takeshi},
   title={Obstruction theory for the approximation and the deformation
   problems for Sobolev mappings},
   journal={Ann. Global Anal. Geom.},
   volume={27},
   date={2005},
   number={4},
   pages={299--332},
   issn={0232-704X},
   doi={10.1007/s10455-005-3891-4},
}

\bib{Izmestiev_Joswig_2003}{article}{
   author={Izmestiev, Ivan},
   author={Joswig, Michael},
   title={Branched coverings, triangulations, and 3-manifolds},
   journal={Adv. Geom.},
   volume={3},
   date={2003},
   number={2},
   pages={191--225},
   issn={1615-715X},
   doi={10.1515/advg.2003.013},
}

\bib{Leoni_2017}{book}{
   author={Leoni, Giovanni},
   title={A first course in Sobolev spaces},
   series={Graduate Studies in Mathematics},
   volume={181},
   edition={2},
   publisher={American Mathematical Society},
   address={Providence, R.I.},
   date={2017},
   pages={xxii+734},
   isbn={978-1-4704-2921-8},
   doi={10.1090/gsm/181},
}

\bib{Liu_1977}{article}{
    author={Liu, Fon Che},
    title={A Luzin type property of Sobolev functions},
    journal={Indiana Univ. Math. J.},
    volume={26},
    date={1977},
    number={4},
    pages={645--651},
    issn={0022-2518},
    doi={10.1512/iumj.1977.26.26051},
}

\bib{Mazya_2011}{book}{
   author={Maz'ya, Vladimir},
   title={Sobolev spaces with applications to elliptic partial differential
   equations},
   series={Grundlehren der mathematischen Wissenschaften},
   volume={342},
   edition={augmented edition},
   publisher={Springer}, 
   address={Heidelberg},
   date={2011},
   pages={xxviii+866},
   isbn={978-3-642-15563-5},
   doi={10.1007/978-3-642-15564-2},
}

\bib{Mazya_Poborchi_1997}{book}{
    author={Maz\cprime ya, Vladimir G.},
    author={Poborchi, Sergei V.},
    title={Differentiable functions on bad domains},
    publisher={World Scientific Publishing},
    address={River Edge, N.J.},
    date={1997},
    pages={xx+481},
    isbn={981-02-2767-1},
}

\bib{Mazya_Shaposhnikova_1999}{article}{
   author={Maz\cprime ya, Vladimir},
   author={Shaposhnikova, Tatyana},
   title={On pointwise interpolation inequalities for derivatives},
   journal={Math. Bohem.},
   volume={124},
   date={1999},
   number={2-3},
   pages={131--148},
   issn={0862-7959},
}

\bib{Mermin1979}{article}{
    author={Mermin, N. D.},
    title={The topological theory of defects in ordered media},
    journal={Rev. Modern Phys.},
    volume={51},
    date={1979},
    number={3},
    pages={591--648},
    issn={0034-6861},
    doi={10.1103/RevModPhys.51.591},
}

\bib{Mohar_1988}{article}{
   author={Mohar, Bojan},
   title={Branched coverings},
   journal={Discrete Comput. Geom.},
   volume={3},
   date={1988},
   number={4},
   pages={339--348},
   issn={0179-5376},
   doi={10.1007/BF02187917},
}

\bib{Nash_1956}{article}{
   author={Nash, John},
   title={The imbedding problem for Riemannian manifolds},
   journal={Ann. of Math. (2)},
   volume={63},
   date={1956},
   pages={20--63},
   issn={0003-486X},
   doi={10.2307/1969989},
}

\bib{Nirenberg_1959}{article}{
   author={Nirenberg, L.},
   title={On elliptic partial differential equations},
   journal={Ann. Scuola Norm. Sup. Pisa Cl. Sci. (3)},
   volume={13},
   date={1959},
   pages={115--162},
   issn={0391-173X},
}

\bib{Pakzad_2003}{article}{
   author={Pakzad, Mohammad Reza},
   title={Weak density of smooth maps in $\sobolev^{1,1}(M,N)$ for non-abelian
   $\pi_1(N)$},
   journal={Ann. Global Anal. Geom.},
   volume={23},
   date={2003},
   number={1},
   pages={1--12},
   issn={0232-704X},
   doi={10.1023/A:1021227017504},
}

\bib{Pakzad_Riviere_2003}{article}{
   author={Pakzad, M. R.},
   author={Rivi\`ere, T.},
   title={Weak density of smooth maps for the Dirichlet energy between
   manifolds},
   journal={Geom. Funct. Anal.},
   volume={13},
   date={2003},
   number={1},
   pages={223--257},
   issn={1016-443X},
   doi={10.1007/s000390300006},
}

\bib{Reshetnyak_1967}{article}{
   author={Reshetnyak, Yu.\ G.},
   title={Stability of conformal mappings in multi-dimensional spaces},
   language={Russian},
   journal={Sibirsk. Mat. \v Z.},
   volume={8},
   date={1967},
   pages={91--114},
   issn={0037-4474},
}

\bib{Reshetnyak_1968}{article}{
   author={Reshetnyak, Yu.\ G.},
   title={Stability theorems for mappings with bounded distortion},
   language={Russian},
   journal={Sibirsk. Mat. \v Z.},
   volume={9},
   date={1968},
   pages={667--684},
   issn={0037-4474},
}

\bib{Royden_Fitzpatrick}{book}{
 author={Royden, Halsey},
 author={Fitzpatrick, Patrick M.},
 isbn={978-0-13-511355-4},
 title={Real analysis},
 publisher={Prentice Hall},
 address={New York},
 edition={4th ed.},
 pages={xii + 505},
 date={2010},
}

\bib{Schoen_Uhlenbeck_1983}{article}{
    author={Schoen, Richard},
    author={Uhlenbeck, Karen},
    title={Boundary regularity and the Dirichlet problem for harmonic maps},
    journal={J. Differential Geom.},
    volume={18},
    date={1983},
    number={2},
    pages={253--268},
    issn={0022-040X},
    doi={10.4310/jdg/1214437663},
}

\bib{Sobolev_1938}{article}{
 author={Sobolev, Sergei Lvovich},
 title={Sur un th{\'e}or{\`e}me d'analyse fonctionnelle},
 journal={Recueil Math{\'e}matique. Nouvelle S{\'e}rie},
 volume={4},
 language={Russian with French summary},
 pages={471--497},
 date={1938},
 publisher={Moscow Mathematical Society, Moscow},
}

\bib{Stein_1970}{book}{
    author={Stein, Elias M.},
    title={Singular integrals and differentiability properties of functions},
    series={Princeton Mathematical Series}, 
    volume={30},
    publisher={Princeton University Press}, 
    address={Princeton, N.J.},
    date={1970},
    pages={xiv+290},
}

\bib{Voigt_2020}{book}{
   author={Voigt, J\"urgen},
   title={A course on topological vector spaces},
   series={Compact Textbooks in Mathematics},
   publisher={Birkh\"auser/Springer},
   address={Cham},
   date={2020},
   pages={viii+155},
   isbn={978-3-030-32945-7},
   isbn={978-3-030-32944-0},
   doi={10.1007/978-3-030-32945-7},
}

\bib{Wente_1969}{article}{
   author={Wente, Henry C.},
   title={An existence theorem for surfaces of constant mean curvature},
   journal={J. Math. Anal. Appl.},
   volume={26},
   date={1969},
   pages={318--344},
   issn={0022-247X},
   doi={10.1016/0022-247X(69)90156-5},
}

\bib{White_1988}{article}{
    author={White, Brian},
    title={Homotopy classes in Sobolev spaces and the existence of energy
    minimizing maps},
    journal={Acta Math.},
    volume={160},
    date={1988},
    number={1-2},
    pages={1--17},
    issn={0001-5962},
    doi={10.1007/BF02392271},
}

  \end{biblist}

\end{bibdiv}

\end{document}